\documentclass[leqno,11pt]{amsart}
\usepackage{amssymb, amsmath,latexsym,amsfonts,amsbsy, amsthm,esint,comment}
\usepackage{color}
\usepackage{graphicx}
\usepackage{tikz}
\usetikzlibrary{arrows, calc, decorations.markings, decorations.pathreplacing, intersections, patterns,patterns.meta}
\usepackage{caption}
\usepackage{subcaption}
\usepackage{hyperref}
\usepackage{enumitem}

\usepackage{pgfplots}
\pgfplotsset{compat=1.15}
\usepackage{mathrsfs}
\usepackage[foot]{amsaddr}

\makeatletter
\DeclareFontFamily{U}{tipa}{}
\DeclareFontShape{U}{tipa}{m}{n}{<->tipa10}{}
\newcommand{\arc@char}{{\usefont{U}{tipa}{m}{n}\symbol{62}}}%

\newcommand{\arc}[1]{\mathpalette\arc@arc{#1}}

\newcommand{\arc@arc}[2]{%
  \sbox0{$\m@th#1#2$}%
  \vbox{
    \hbox{\resizebox{\wd0}{\height}{\arc@char}}
    \nointerlineskip
    \box0
  }%
}                                                   
\makeatother

\setlength{\oddsidemargin}{0mm} \setlength{\evensidemargin}{0mm}
\setlength{\topmargin}{0mm} \setlength{\textheight}{220mm} \setlength{\textwidth}{165mm}

\let\pa=\partial
\let\al=\alpha

\let\g=\gamma
\let\d=\delta

\let\lam=\lambda
\let\r=\rho

\let\f=\frac

\let\G= \Gamma

\let\Lam=\Lambda
\let\S=\Sigma

\let\Om=\Omega

\let\e=\varepsilon
\let\pa=\partial

\let\ri=\rightarrow
\let\na=\nabla
\def\be{\mathbf{e}}


\newcommand{\beq}{\begin{equation}}
\newcommand{\eeq}{\end{equation}}
\newcommand{\beqo}{\begin{equation*}}
\newcommand{\eeqo}{\end{equation*}}
\newcommand{\ben}{\begin{eqnarray}}
\newcommand{\een}{\end{eqnarray}}
\newcommand{\beno}{\begin{eqnarray*}}
\newcommand{\eeno}{\end{eqnarray*}}

\newtheorem{theorem}{Theorem}[section]
\newtheorem{definition}[theorem]{Definition}
\newtheorem{lemma}[theorem]{Lemma}
\newtheorem{proposition}[theorem]{Proposition}
\newtheorem{corol}[theorem]{Corollary}

\theoremstyle{remark}

\newtheorem{case}{Case}[section]
\newtheorem{rmk}{Remark}[section]

\newcommand{\dist}{\mathrm{dist}}

\newcommand{\BR}{\mathbb{R}}

\newcommand{\ch}{\mathcal{H}^1}

\begin{document}

\title[Half-space Allen-Cahn solution]{Half-space minimizing solutions of a two dimensional Allen-Cahn system}

\author{Zhiyuan Geng}
\address{Department of Mathematics, Purdue University, 150 N. University Street, West Lafayette, IN 47907–2067}
\email{geng42@purdue.edu}

\date{\today}

\begin{abstract}
This paper studies minimizing solutions to a two dimensional Allen-Cahn system on the upper half plane, subject to Dirichlet boundary conditions,
\begin{equation*}
    \Delta u-\nabla_u W(u)=0, \quad u: \mathbb{R}_+^2\to \mathbb{R}^2,\  u=u_0 \text{ on } \pa \mathbb{R}_+^2,
\end{equation*}
where $W: \mathbb{R}^2\to [0,\infty)$ is a multi-well potential. We give a complete classification of such half-space minimizing solutions in terms of their blow-down limits at infinity. In addition, we characterize the asymptotic behavior of solutions near the associated sharp interfaces.

\end{abstract}

\keywords{half-space solution, Allen-Cahn system, blow-down limit, diffuse interface}

\maketitle

\section{Introduction}\label{sec:intro}

\subsection{Mathematical formulation and main results.} In this paper, we investigate minimizing solutions to the system
\begin{equation}\label{eq:2D allen cahn}
    \Delta u-\nabla_u W (u)=0,\quad u:\mathbb{R}^2_+ \to \mathbb{R}^2,
\end{equation}
where $\mathbb{R}^2_+$ is the upper half plane $\{(x,y)\in \mathbb{R}^2: y>0\}$. Here $W:\mathbb{R}^2\to \mathbb{R}_+\cup\{0\}$ is a nonnegative potential function that vanishes at finitely many 2D points. \eqref{eq:2D allen cahn} is the Euler-Lagrange equation corresponding to the energy functional
\begin{equation}\label{energy functional}
J(u,\Omega):=\int_\Om \left( \f12|\na u|^2+W(u) \right)\,dz,\ \ \forall \text{ bounded open set } \Om\subset \BR^2_+. 
\end{equation}

We define \emph{minimizing solutions} in the sense of De Giorgi as follows.
\begin{definition}\label{def: minimizing sol}
A function $u:\BR^2_+\ri\BR^2$  is a \emph{minimizing solution} of \eqref{eq:2D allen cahn} if 
\begin{equation}
    J(u,\Om\cap \BR_+^2)\leq J(u+v,\Om\cap\BR_+^2), \quad \forall \text{ bounded open set } \Om\subset \BR^2, \ \forall  v\in C_0^1(\Om\cap \BR_+^2).
\end{equation}
We also call $u$ a local minimizer of the functional \eqref{energy functional}.
\end{definition}

We give the following hypotheses on the potential function $W$. For the hypotheses ($\mathrm{H}_3$) and ($\mathrm{H}_3'$), we assume that one of them holds.
\begin{itemize}
\itemsep0.5em 
    \item[($\mathrm{H}_1$)] $W\in C^2(\mathbb{R}^2;[0,+\infty))$, $\{z\in\mathbb{R}^2:W(z)=0\}=\{a_1,a_2,...a_N\}$, $\nabla_uW(u)\cdot u>0$ if $|u|>M$, and 
    \begin{equation*}
        c_1|\xi|^2\leq \xi^T \na^2 W(a_i)\xi\leq c_2|\xi|^2,\ \ i\in\{1,...,N\},
    \end{equation*}
    where $M,\ c_1,\ c_2$ are positive constants.
    \item[($\mathrm{H}_2$)] For $i\neq j$, $i,j\in \{1,...,N\}$, there exists a \emph{unique} (up to translation) minimizing heteroclinic connection $U_{ij}\in W^{1,2}(\BR,\BR^2)$ that minimizes the following 1D energy:
\beqo
J_1(U):=\int_{\BR}\left(\f12|U'|^2+W(U)\right)\,d\eta, \quad \lim\limits_{\eta\ri-\infty}U(\eta)=a_i,\ \lim\limits_{\eta\ri+\infty}U(\eta)=a_j.
\eeqo
Denote by $\sigma_{ij}$ the minimal energy $J_1(U_{ij})$.
    \item[($\mathrm{H}_3$)]   $N=3$ and the following triangle inequality holds:
\begin{equation}\label{tri ineq}
    \sigma_{ij}+\sigma_{jk}>\sigma_{ik}, \quad \forall \{i,j,k\}=\{1,2,3\},
\end{equation}
We refer to \cite[Proposition 2.6 \& Proposition 2.12]{afs-book} for the sufficiency and necessity of \eqref{tri ineq} for the existence of all three $U_{ij}$.

\item[($\mathrm{H}_3'$)] 
\noindent $N>3$ and $\sigma_{ij}\equiv \sigma$ for any $i\neq j\in\{1,...,N\}$. 
\end{itemize}

A Dirichlet boundary condition will be imposed on $\pa \BR^2_+$:
\begin{equation}
    \label{bdy cond}
    \qquad u=u_0\ \ \text{on }\pa \BR^2_+=\{(x,0):x\in\BR\},
\end{equation}
where $u_0\in C^2(\mathbb{R},\BR^2)$. For $u_0$ we further assume the following: 
\begin{equation*}
    \lim\limits_{x\ri\infty} u_0(x)=a_1,\quad  \lim\limits_{x\to-\infty}u_0(x)=a_2.
\end{equation*}
\begin{equation}\label{u0 finite energy}
    \int_{-\infty}^\infty \left(\f12|u_0'|^2+W(u_0)\right)\,dz<\infty.
\end{equation}
\begin{equation}\label{u0' control}
    \int_{-\infty}^\infty |x||u_0'|\,dx<\infty.
\end{equation}

The energy bound \eqref{u0 finite energy} together with Hypothesis ($\mathrm{H}_1$) implies that
\begin{equation}\label{u_0 conv to a1 a2}
    \int_0^\infty |u_0(x)-a_1|^2\,dx+\int_0^\infty |u_0(x)-a_2|^2\,dx<\infty,
\end{equation}
which guarantees that $u_0$ converges to $a_1/a_2$ fast enough as $|x|$ tends to $\infty$. The integral estimate \eqref{u0' control} is a technical assumption that will be useful in the almost monotonicity formula, see Lemma \ref{lem:pohozaev}. We note that it is easy to find a $u_0$ satisfying all these conditions. For example, any function converging to $a_1,\,a_2$ at the exponential rate $e^{-k|x|}$ suffices.

We also assume that $u$ is uniformly bounded. Specifically, there exists a constant $M>0$ such that  
\begin{equation}\label{uniform bound}
    \|u\|_{L^\infty(\BR^2_+;\BR^2)}+ \|\na u\|_{L^\infty(\BR^2_+;\BR^{2\times 2})}\leq M.
\end{equation}
In fact, uniform boundedness of $|\na u|$ follows from uniform boundedness of $|u|$ by standard elliptic regularity theory. We state \eqref{uniform bound} in this form for convenience. 

Another important notation is the following degenerate Riemannian metric on $\mathbb{R}^2$ induced by the Allen-Cahn energy,  
\begin{equation}\label{def:metric d}
    d(p,q):=\inf\left\{\sqrt{2} \int_0^1 |\g'(t)|\sqrt{W(\g(t))}\,dt: \g\in C^1([0,1],\mathbb{R}^2),\, \g(0)=p,\,\g(1)=q\right\}, \ \ \forall p,q\in\BR^2. 
\end{equation}
Intuitively, $d(p,q)$ quantifies the minimal energy needed to move from $p$ to $q$ in the phase plane. It is worth mentioning that $d(a_i,a_j)=\sigma_{ij}$.

The minimizing solution $u$ of \eqref{eq:2D allen cahn} subject to \eqref{bdy cond} is a diffuse analog of a half-space minimizing partition. For $v\in \mathrm{BV}_{loc}(\BR_+^2; \{a_1,...,a_N\})$ and $\Omega\subset \BR_+^2$, define 
\begin{equation}\label{functional:min par map}
\begin{split}
 J_*(v,\Om):= &  \sum\limits_{1\leq i<j\leq N}\sigma_{ij}\mathcal{H}^1(\pa \{v^{-1}(a_i)\}\cap\pa\{v^{-1}(a_j)\}\cap \Om)\\
 &+ \int_{\pa \Om\cap \pa \BR_+^2\cap\{x>0\}} d(\mathrm{tr}\,v,a_1)\,d\ch+ \int_{\pa \Om\cap \pa \BR_+^2\cap\{x<0\}} d(\mathrm{tr}\,v,a_2)\,d\ch.
 \end{split}
\end{equation}

Here $\mathrm{tr}\,v$ denotes the classical definition of traces of BV functions, see e.g. \cite[Chapter 2]{giusti1984minimal}. 

\begin{definition}\label{def: min par map}
$v\in  \mathrm{BV}_{loc}(\BR_+^2; \{a_1,...,a_N\})$ is called a (Dirichlet) \emph{minimizing partition map} if  
\begin{equation*}
    J_*(v,\Omega\cap\BR_+^2)\leq J_*(w,\Omega\cap\BR_+^2),
\end{equation*}
for every bounded open set $\Om\subset \BR^2$ and every $w\in \mathrm{BV}_{loc}(\mathbb{R}_+^2, \{a_1,...a_N\})$ such that $w=v$ on $\mathbb{R}_+^2\setminus \Om$.    
\end{definition}

Define the blow-down map of $u$ at the scale $r$($r\gg 1$) by
\begin{equation*}
    u_r(z):= u(rz),\quad z\in \BR_+^2.
\end{equation*}
Then $u_r$ automatically is a local minimizer of the functional
\begin{equation*}
    J_r(u,\Omega):=\int_{\Omega} \left(\f{1}{2r}|\na u|^2+r W(u)\right)\,dz,
\end{equation*}
subject to the boundary condition 
\begin{equation*}
    u_r(x,0)= u_0(rx),\quad \forall x\in\BR.
\end{equation*}
Our first result classifies the minimizing solution $u$ in terms of its asymptotic behavior at infinity by studying the limit of $u_r$, which can be characterized by minimal cones associated with the functional \eqref{functional:min par map}.

\begin{theorem}\label{main thm: cls of blow down}
    Let $u:\BR_+^2\to \BR^2$ be a minimizing solution of \eqref{eq:2D allen cahn} satisfying \eqref{bdy cond}--\eqref{uniform bound}. There exists a minimizing partition map $\tilde{u}\in \mathrm{BV}_{loc}(\BR_+^2;\{a_1,a_2,...,a_N\})$ in the sense of Definition \ref{def: min par map} such that $\na \tilde{u}\cdot\f{z}{|z|}=0$ for any $z\in \BR_+^2$, and satisfies
    \begin{equation*}
        \lim\limits_{r\to \infty} \|u_r-\tilde{u}\|_{L^1(K)}=0,\ \forall K\Subset \BR_+^2,
    \end{equation*}
    where $u_r(z):=u(rz)$. Extend the definition of $\tilde{u}$ to the boundary $\pa \BR_+^2$ by setting (in the polar coordinate $z=(r\cos\theta,r\sin\theta)$) 
    \begin{equation}\label{def of hatu}
        \hat{u}(r,\theta)=\begin{cases}
        \tilde{u}(r,\theta), & 0<\theta<\pi,\\
        a_1, & \theta=0,\\
        a_2, & \theta=\pi.
        \end{cases}
    \end{equation}
   Then there exist $0\leq \al_1\leq \al_2\leq \pi$ such that if $\al_1<\al_2$, then there is an index $i\in\{3,...,N\}$ such that
       \begin{equation*}
        \hat{u}(r,\theta)=\begin{cases}
        a_i, &  \al_1<\theta<\al_2\\
        a_1, & 0 \leq\theta\leq \al_1,\\
        a_2, & \al_2\leq \theta\leq \pi.
        \end{cases}
    \end{equation*}
    Otherwise if $\al_1=\al_2$, then
    \begin{equation*}
        \hat{u}(r,\theta)=\begin{cases}
        a_1, & 0\leq\theta< \al_1,\\
        a_2, & \al_2< \theta\leq \pi.
        \end{cases}
    \end{equation*}

    Consequently, $\tilde{u}$ must fall into one of the following three categories.
      \begin{enumerate}[label={\alph*)}]
        \item Constant: $\al_1=\al_2=0$, $\al_1=\al_2=\pi$ or $\al_1=0,\, \al_2=\pi$.
        \item Two phase solution: $0<\al_1<\al_2=\pi$, $0=\al_1<\al_2<\pi$, or $0<\al_1=\al_2<\pi$.
        \item Triple junction: $0<\al_1<\al_2<\pi$.
    \end{enumerate}  
    Moreover, when $\al_1<\al_2$, it holds that $\al_2-\al_1\geq \f{2\pi}{3}$ when ($\mathrm{H}_3'$) holds and $\al_1-\al_2\geq \theta_3$ when ($\mathrm{H}_3$) holds ( $a_i=a_3$). Here $\theta_3$ is determined by the relation 
        \begin{equation}\label{young's law}
            \f{\sin\theta_1}{\sigma_{23}}= \f{\sin\theta_2}{\sigma_{13}}= \f{\sin\theta_3}{\sigma_{12}}, \qquad\sum\limits_{i=1}^3 \theta_i=2\pi.
        \end{equation}

\end{theorem}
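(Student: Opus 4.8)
The plan is to study the blow‑down family $u_r(z)=u(rz)$: show via $\Gamma$–convergence that its limits are minimizing partition maps, use the almost monotonicity formula to force such a limit to be a cone and to be unique, and then classify the minimizing partition cones on $\overline{\BR^2_+}$ with the prescribed boundary weights by a short list of explicit competitors. The first point is the linear energy bound $J(u,B_R^+)\le CR$ for $R\ge1$: the upper bound follows by comparing $u$, on a large annulus, with a field relaxing it to the one–dimensional profile $U_{12}$ carried along a fixed ray, and the sharp almost monotonicity of $\Theta(R):=R^{-1}J(u,B_R^+)$ — with an additive boundary correction assembled from $u_0$ whose remainder is integrable in $R$ by \eqref{u0' control} — follows from Lemma~\ref{lem:pohozaev}, using \eqref{u0 finite energy}–\eqref{u_0 conv to a1 a2}. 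Hence $\Theta(R)$ has a limit $\Theta_\infty$ as $R\to\infty$, and $J_r(u_r,B_R^+)=r^{-1}J(u,B_{rR}^+)$ is bounded uniformly in $r$; equi–coercivity of the vectorial Modica–Mortola functionals then gives precompactness of $\{u_r\}$ in $L^1_{loc}(\BR^2_+)$, every subsequential limit $\tilde u$ lying in $\mathrm{BV}_{loc}(\BR^2_+;\{a_1,\dots,a_N\})$.

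Next I would identify such a limit. Because $u_r$ is a local minimizer of $J_r$ and $J_r$ $\Gamma$–converges in the interior to $v\mapsto\sum_{i<j}\sigma_{ij}\ch(\pa\{v^{-1}(a_i)\}\cap\pa\{v^{-1}(a_j)\}\cap\,\cdot\,)$ — with local minimizers passing to local minimizers, since recovery sequences can be taken as local surgeries — the limit $\tilde u$ minimizes the interior partition energy against compactly supported perturbations. The boundary contribution is recovered by a boundary–layer analysis of the concentrating data $u_0(r\,\cdot)$, which tends to $a_1$ on $\{x>0\}$ and to $a_2$ on $\{x<0\}$ with transition width $O(1/r)$: reconciling the interior trace of $\tilde u$ with this data costs, in the limit, exactly $\int_{\{x>0\}}d(\mathrm{tr}\,\tilde u,a_1)\,d\ch+\int_{\{x<0\}}d(\mathrm{tr}\,\tilde u,a_2)\,d\ch$ (the $\liminf$ bound, proved by slicing parallel to $\pa\BR^2_+$ and estimating each slice from below by $d$, is the delicate point). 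Thus $\tilde u$ is a minimizing partition map in the sense of Definition~\ref{def: min par map}. Transferring the convergence $\Theta(R)\to\Theta_\infty$ to the rescaled problem forces $\na\tilde u\cdot\f{z}{|z|}=0$, i.e. $\tilde u$ is a cone; and since the minimizing partition cones here are rigid (next step) and $\Theta_\infty$ fixes the energy density, all subsequential limits coincide, giving the asserted full $L^1_{loc}$ convergence.

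It remains to classify the cones, which I would do directly. A minimizing partition cone has interfaces that are rays from the origin, so it is encoded by a finite partition of $(0,\pi)$ into angular arcs with labels in $\{a_1,\dots,a_N\}$, and three competitors pin down the possibilities. First, if the arc adjacent to $\theta=0$ is not $a_1$ (resp. the arc adjacent to $\theta=\pi$ is not $a_2$), relabel it to $a_1$ (resp. $a_2$): the boundary penalty on $\{x>0\}$ (resp. $\{x<0\}$) drops to zero while one interface tension changes by strictly less, by the strict triangle inequality in $(\mathrm{H}_3)$/$(\mathrm{H}_3')$ — contradiction; so the extreme arcs are $a_1$ and $a_2$ (unless there is only one arc, the constant case). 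Second, configurations with more than three arcs are non–minimizing: a chord–capping competitor inside a ball $B_R$ — replacing the two interior rays bordering a chosen interior arc (total length $2R$, one common tension when a block $a_k,a_l,a_k$ is collapsed) by the chord subtending that arc on $\pa B_R$ (length $2R\sin(w/2)<2R$), the vacated wedge being absorbed by a neighbour — disposes of every repeated block and, by capping the thinnest interior arc, of every configuration with five or more arcs, while the residual four–arc patterns (only an issue under $(\mathrm{H}_3')$) are eliminated by the displacement of the next item. This leaves the two–arc cone $a_1\,|\,a_2$ and the three–arc cone $a_1\,|\,a_i\,|\,a_2$ with $i\in\{3,\dots,N\}$, which, together with the degenerate zero–width–arc limits, are exactly categories a)–c). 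Third, for the three–arc cone with interior rays at $\al_1<\al_2$ and tensions $\sigma_{1i},\sigma_{i2}$, displace the triple point from the origin to $\delta(\cos\phi,\sin\phi)$ with $\phi\in[0,\pi]$: an $a_1a_2$ interface of length $\approx\delta$ is created and the other two shorten, so first–order minimality forces
\begin{equation*}
\sigma_{1i}\cos(\phi-\al_1)+\sigma_{i2}\cos(\phi-\al_2)\le\sigma_{12}\qquad\text{for every }\phi\in[0,\pi].
\end{equation*}
The left side attains its maximum over $\phi$ at a point of $(\al_1,\al_2)\subset(0,\pi)$, namely $\sqrt{\sigma_{1i}^2+\sigma_{i2}^2+2\sigma_{1i}\sigma_{i2}\cos(\al_2-\al_1)}$, so $\cos(\al_2-\al_1)\le\f{\sigma_{12}^2-\sigma_{1i}^2-\sigma_{i2}^2}{2\sigma_{1i}\sigma_{i2}}=\cos\theta_3$, where $\theta_3\in(0,\pi)$ is the angle from \eqref{young's law} opposite the $a_1a_2$–interface; as $\cos$ is decreasing on $(0,\pi)$ this reads $\al_2-\al_1\ge\theta_3$, and when $(\mathrm{H}_3')$ holds all tensions agree and $\theta_3=\f{2\pi}3$. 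The same displacement applied to a would‑be four–arc cone gives, in the equal–tension case, the inequality $|e^{-i\phi_1}+e^{-i\phi_2}+e^{-i\phi_3}|\le1$ for the three interior ray angles, which is impossible because $1+8\cos\tfrac{w_1}2\cos\tfrac{w_2}2\cos\tfrac{w_1+w_2}2>1$ whenever the two middle arc widths $w_1,w_2$ are positive with $w_1+w_2<\pi$.

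I expect the technical heart to be the first two steps: the linear energy growth together with the sharp almost monotonicity carrying the $u_0$–correction, and above all the $\Gamma$–convergence \emph{including} the boundary functional $\int d(\mathrm{tr}\,\tilde u,a_\bullet)\,d\ch$, whose lower bound demands a careful boundary–layer/slicing estimate, plus the uniqueness of the blow‑down needed to upgrade subsequential convergence to full convergence. Once these are in place, the classification in the last step is a finite bookkeeping with the explicit competitors above.
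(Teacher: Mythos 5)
Your overall architecture (compactness/$\Gamma$-convergence, homogeneity via monotonicity, classification of cones, uniqueness) matches the paper's, but three of the load-bearing steps are asserted rather than proved, and in each case the assertion is false as stated.

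First, the claim that the ``sharp almost monotonicity of $\Theta(R)=R^{-1}J(u,B_R^+)$ follows from Lemma~\ref{lem:pohozaev}'' is exactly what fails for systems. The Pohozaev identity only controls $\mathcal{W}(r)=r^{-1}\int_{B_r^+}W(u)$, and the boundary integrand it produces, $\tfrac12|\pa_r u|^2-\tfrac12|\pa_T u|^2+W(u)$, has no sign unless one has a Modica-type bound $W(u)\ge\tfrac12|\pa_T u|^2$, which is unavailable in the vectorial case (the paper points this out explicitly). The paper bridges this gap with the almost energy equipartition, Proposition \ref{prop: ene equipartition}, whose proof (matching energy upper and lower bounds against the minimal partition energy $\mathcal{J}_R$, the lower bound going through the wetting-region partition problem) is the technical core of Section \ref{sec:classification}; only after Cauchy--Schwarz with the equipartition estimate do $\mathcal{W}$ and $\mathcal{N}$ converge and the radial derivative become small (Lemmas \ref{lem:conv of cal W, N}--\ref{lem:radial der small}), giving homogeneity. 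Your proposal has no substitute for this step. Second, uniqueness of the blow-down does not follow from ``rigidity of the cones plus the energy density'': under $(\mathrm{H}_3')$ every triple-junction cone with $\al_2-\al_1\ge\tfrac{2\pi}{3}$ has the same density $2\sigma$, so $\Theta_\infty$ does not determine $\al_1,\al_2$, and distinct subsequences could a priori select different orientations. The paper needs all of Section \ref{sec:uniqueness} (sharp lower bound via slicing, localization of the diffuse interface, and a Cauchy-sequence argument for the interface angles at dyadic scales) to rule this out.

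Third, your first classification competitor is wrong and its conclusion contradicts the theorem. Relabelling the entire sector adjacent to $\theta=0$ to $a_1$ is not a compactly supported perturbation; localizing it to $B_R$ creates an $a_1$--$a_i$ interface along an arc of $\pa B_R$ of length comparable to $R$, so no contradiction results. Indeed the theorem (and the remark after it) explicitly allows $\al_1=0$ or $\al_2=\pi$, i.e.\ the interior phase adjacent to a boundary half-line may differ from the boundary datum, with the mismatch penalty $d(a_i,a_1)$ paid along the axis; your argument would exclude precisely these admissible configurations, and your later appeal to ``degenerate zero-width-arc limits'' reintroduces them inconsistently. The paper instead classifies via Lemma \ref{lem: structure of min par map} (which analyzes where interfaces may meet the $x$-axis, Cases 1--4) together with Propositions \ref{prop: homogenity of blowdown} and \ref{prop:hat u}; there the excluded configurations are interior arcs labelled $a_1$ or $a_2$ strictly between the first and last discontinuity angles, not the extreme arcs. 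Your third competitor (displacing the junction and optimizing over the direction $\phi$) is essentially the paper's stability argument for the angle condition and is fine for $0<\al_1<\al_2<\pi$, but it too needs a separate treatment when the junction degenerates onto the boundary.
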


\begin{rmk}
    In cases a) and b), since the blow-down limit may not coincide with $a_1$ on $\{(x,0):x>0\}$ and with $a_2$ on $\{(x,0):x<0\}$, a boundary transitional layer is allowed. $u$ may transit from a phase $a_i$ to $a_1/a_2$ within a thin layer near the boundary $\pa \BR_+^2$.
\end{rmk}

For any $0\leq \alpha\leq \pi$, let $l_\alpha$ denote the ray emanating from the origin in direction $\alpha$:
\begin{equation*}
    l_\alpha:=\{(r\cos\alpha,r\sin\alpha):r>0\}.
\end{equation*}

Take a constant $\gamma>0$. We define the diffuse interface as 
\begin{equation}\label{def:diff intef}
    \mathcal{D}_\g:=\{z\in\BR^2_+ : \min_i|u(z)-a_i|\geq \g\}.
\end{equation}
Generally speaking, $\mathcal{D}_\g$ contains points that do not belong to any phase $a_i$, which therefore represents the interfacial region that separates coexisting phases. From Theorem \ref{main thm: cls of blow down}, we know that $u$ resembles $\hat{u}$ at infinity; therefore, it is natural to expect $\mathcal{D}_{\g}$ to be close to the discontinuous set of $\hat{u}$ written as $\mathcal{S}(\hat{u})=l_{\al_1}\cup l_{\al_2}$. Below we state our second main result confirming this property and refine the profile of $u$ near $\mathcal{S}(\hat{u})$.

\begin{theorem}\label{main thm: refined behavior near interface}
Let $u:\BR_+^2\to \BR^2$ be a minimizing solution of \eqref{eq:2D allen cahn} satisfying \eqref{bdy cond}--\eqref{uniform bound}. Take $\hat{u}$ as the extended blow-down map of $u$ defined in Theorem \ref{main thm: cls of blow down} and suppose $\mathcal{S}(\hat{u})=l_{\al_1}\cup l_{\al_2}$ for some $0< \al_1\leq \al_2< \pi$.  For any $\g$ small enough, there exists a constant $C=C(u,\g,W)$ such that for sufficiently large $R$,
\begin{equation}\label{main thm 2:localization of diff interface}
    \left(\mathcal{D}_\g\cap \pa B_R \right) \subset\{(R\cos\theta,R\sin\theta): \theta\in(\al_1-CR^{-1}, \al_1+CR^{-1})\cup (\al_2-CR^{-1}, \al_2+CR^{-1})\}.
\end{equation}

Furthermore, suppose $l_\alpha\subset \mathcal{S}(\hat{u})$ for some $0<\al<\pi$, and $l_\al$ separates the phases $a_i$ and $a_j$ in the sense that $\hat{u}(r,\theta)=a_i$ when $\theta\to\al-$, $\hat{u}(r,\theta)=a_j$ when $\theta\to \al+$. Set $\mathbf{e}_\al:= (\cos\al,\sin\al),\quad \mathbf{e}_\al^\perp:= (-\sin\al,\cos\al)$. Then there exists a constant $h$ such that 
\begin{equation}\label{main thm 2:convergence to Uij}
    \lim\limits_{x\ri+\infty} \|u(x\mathbf{e}_\al+y \mathbf{e}_\al^\perp)-U_{ij}(y-h)\|_{C^{2,\beta}_{loc}(\BR;\BR^2)}=0,\quad \forall \beta\in(0,1).
\end{equation}
\end{theorem}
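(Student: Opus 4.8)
The plan is to prove Theorem~\ref{main thm: refined behavior near interface} in two stages corresponding to its two conclusions. For the localization estimate~\eqref{main thm 2:localization of diff interface}, the strategy is an energy density argument on circles combined with the blow-down classification from Theorem~\ref{main thm: cls of blow down}. First I would use the almost monotonicity formula (Lemma~\ref{lem:pohozaev}, referenced but not yet shown in this excerpt) to control the renormalized energy $r^{-1}J(u,B_r\cap\BR_+^2)$ and show it converges as $r\to\infty$ to the energy of the limiting cone $\hat u$, which by Theorem~\ref{main thm: cls of blow down} equals $\sigma_{1i}+\sigma_{i2}$ (or $\sigma_{12}$ in the two-phase degenerate case). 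The key quantitative step is a \emph{clearing-out} or density lemma: if on an arc of $\pa B_R$ of angular width $\delta$ the solution stays in $\mathcal D_\g$, then it carries at least $c(\g)\delta R$ of interfacial energy on an annulus of width $\sim\delta R$; comparing this with the total energy deficit $J(u,B_{2R})-2R\cdot(\text{limit density})\to 0$ forces $\delta\leq CR^{-1}$. Equivalently, one can blow up at scale $R$ around a putative interface point far from $l_{\al_1},l_{\al_2}$ and use minimality plus the classification of entire minimizers in the plane to derive a contradiction; the decay rate $R^{-1}$ is the natural one because the energy excess over the cone decays like $R^{-1}$ under~\eqref{u0' control}.

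For the profile convergence~\eqref{main thm 2:convergence to Uij}, the plan is a standard blow-up/compactness argument along the interface ray $l_\al$. Fix $\al\in(0,\pi)$ with $l_\al\subset\mathcal S(\hat u)$ separating $a_i$ and $a_j$, and consider the translated sequence $u^{(x)}(y):=u(x\be_\al+y\be_\al^\perp)$ for $x\to+\infty$, viewed as functions on a growing strip (width $\sim x\sin$-distance to the boundary, which tends to $\infty$). By the uniform bounds~\eqref{uniform bound} and interior elliptic estimates, $u^{(x)}$ is precompact in $C^{2,\beta}_{loc}(\BR;\BR^2)$; any subsequential limit $v$ is an entire bounded solution of $v''=\nabla_u W(v)$ on $\BR$ (the tangential derivative along $\be_\al$ disappears in the limit since $\nabla u\cdot z/|z|\to 0$ by the cone structure, and the Laplacian reduces to the one-dimensional second derivative). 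Minimality passes to the limit, so $v$ is a one-dimensional minimizer; the localization estimate~\eqref{main thm 2:localization of diff interface} combined with Theorem~\ref{main thm: cls of blow down} shows $v(-\infty)=a_i$, $v(+\infty)=a_j$, and then Hypothesis~($\mathrm H_2$) (uniqueness of the minimizing heteroclinic) identifies $v=U_{ij}(\cdot-h)$ for some shift $h$. The shift is independent of the subsequence because two limits would be heteroclinics differing by a translation, while the localization estimate pins the ``center'' of $u^{(x)}$ to within $O(x^{-1})$ of the ray, so all limits coincide; a Cauchy-sequence argument then upgrades subsequential convergence to full convergence as $x\to+\infty$.

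The main obstacle, I expect, is making the density/clearing-out lemma behind~\eqref{main thm 2:localization of diff interface} both quantitative and uniform: one must show that interfacial energy cannot ``spread'' angularly, i.e. that the set $\mathcal D_\g\cap\pa B_R$ really is trapped in $O(R^{-1})$-width arcs rather than, say, $O(R^{-1/2})$ or logarithmically wide ones. This requires a sharp lower bound on the energy of a transition confined to a tube of given width together with a matching sharp upper bound on the total energy coming from the monotonicity formula and the assumption~\eqref{u0' control}; the interplay of the boundary condition (which contributes the $d(\mathrm{tr}\,v,a_1)$, $d(\mathrm{tr}\,v,a_2)$ boundary terms in $J_*$) with the interior triple-junction structure is the delicate point, especially near the endpoints $\al_1,\al_2$ where the interface meets either the boundary $\pa\BR_+^2$ or another interface at a triple junction governed by~\eqref{young's law}. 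A secondary technical difficulty is that the blow-up strips in the second part are \emph{not} all of $\BR^2$ but half-plane-like regions whose boundary recedes to infinity only when $\al_1>0$ and $\al_2<\pi$; handling the boundary terms in the minimality comparison uniformly as $x\to\infty$ needs the decay~\eqref{u_0 conv to a1 a2} of $u_0$ to its limits $a_1,a_2$.
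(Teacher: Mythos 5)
There is a genuine gap, and it sits exactly where you flagged it. Your mechanism for \eqref{main thm 2:localization of diff interface} --- a clearing-out/density comparison of the energy carried by a wide arc of $\mathcal D_\g$ against the excess $J(u,B_R^+)-2\sigma R$ --- controls only the \emph{width} of the transition layer on each slice, not its \emph{location}. An interface that stays $O(1)$ thin but whose center drifts by a distance $D$ over a length $L\sim R$ costs only $\sim\sigma D^2/L$ extra energy; with an $O(1)$ excess this yields $D\lesssim R^{1/2}$ and nothing better, which is exactly the intermediate Lemma in the paper (the $O(R^{1/2})$ localization). The refinement to $O(1)$ cannot come from first-order energy comparison. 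The paper's route is: (i) sharp additive energy bounds $2\sigma R-C\le J(u,B_R^+)\le 2\sigma R+C$ (and the same on a quadrilateral $Q_R$ adapted to the rays), obtained from exponential decay of $u$ along the bisecting rays $l_{\al_1/2},l_{(\al_1+\al_2)/2},l_{(\al_2+\pi)/2}$ plus an explicit competitor; (ii) the consequence $\int_{\mathcal S}|\pa_{\mathbf e_{\al}}u|^2\,dz\le C$; (iii) Hamiltonian-type identities showing the slice energy $G(x)\to\sigma$ and the flux $H(x)=\int\pa_xv\cdot\pa_yv\,dy\to0$ exponentially; and (iv) the Schatzman spectral estimate $J_1(w)-\sigma\ge\Lambda\,d_1(w,\mathcal U)^2$, which converts the slicewise excess into $H^1$-closeness to the manifold of translates of $U_{ij}$ and gives $\int|h'(x)|\,dx<\infty$, hence convergence of the optimal shift $h(x)\to h_0$. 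None of (iii)--(iv) appears in your proposal, and they are the engine of the proof.

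Your argument for \eqref{main thm 2:convergence to Uij} is also circular in its final step. You use the $O(R^{-1})$ localization to ``pin the center'' of the translated profiles and conclude that all subsequential limits share the same shift $h$; but in the paper the logical order is reversed: \eqref{main thm 2:localization of diff interface} (for interior rays) is \emph{deduced from} the convergence $v(x,\cdot)\to U_{ij}(\cdot-h_0)$, not used to prove it. Before the shift is shown to converge, the only available localization is $O(R^{1/2})$, which permits the center to drift unboundedly, so compactness plus uniqueness of the heteroclinic (H$_2$) only identifies each subsequential limit as \emph{some} translate of $U_{ij}$ (or a constant, if the center escapes); it does not make $h(x)$ Cauchy. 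The Cauchy property is precisely $\int|h'|\,dx<\infty$, which requires the explicit formula for $h'(x)$ from the orthogonality condition together with the bounds in (ii)--(iv). Two further points to be careful about: the limit being one-dimensional does not follow from a pointwise statement ``$\nabla u\cdot z/|z|\to0$'' (which is only known in an integrated sense), and the sharp $O(1)$ upper energy bound is genuinely needed --- the monotonicity formula alone gives only convergence of $R^{-1}J(u,B_R^+)$, not the additive $+C$ control on which the whole slicing argument rests.
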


\begin{rmk}
    \eqref{main thm 2:localization of diff interface} and \eqref{main thm 2:convergence to Uij} imply that for an interior sharp interface $l_\al$ with $\al\in(0,\pi)$, $\mathcal{D}_\g\cap B_R$ is contained in an $O(1)$ neighborhood of $l_\al$. However, if the sharp interface coincides with the positive or negative part of $\pa \BR_+^2$, i.e. when $\al_i=0$ or $\pi$, then such $O(1)$ estimate is not available. Otherwise, suppose $\mathbf{e}_\al=(1,0)$ and $\lim\limits_{x\to +\infty}\|u(x,y)-U_{1j}(y-h)\|_{C^{2,\beta}_{loc}}=0$, then $\lim\limits_{x\to+\infty}\|u_0(x)-a_1\|>0$, which contradicts \eqref{u_0 conv to a1 a2}.
\end{rmk}

\subsection{Related results and outline of the paper}

For the scalar case where the potential $W$ has only two energy wells $\pm1$, the classification of the global solution $u:\BR^n\to\BR$ for the Allen-Cahn equation is a problem widely studied, leading to the famous conjecture of De Giorgi \cite{Degiorgi1978}, which proposes that any global solution that is monotonic in one variable only depends on that variable for dimension $n\leq 8$, i.e. all level sets are parallel hyperplanes. Numerous important contributions have been made to prove De Giorgi's conjecture and its variants, as well as to understand the relationship between the solutions of the Allen–Cahn equation and minimal surfaces; see for instance, \cite{ghoussoub1998conjecture,ambrosio2000entire,savin2009regularity,farina20111d,del2011giorgi, tonegawa2012stable,pacard2013stable,liu2017global,wang2017new,guaraco2018min,wang2019finite,wang2019second,florit2025stable} and the expository papers \cite{savin2010minimal,chan2018giorgi} for a detailed account. For the half-space scalar Allen-Cahn equation, the solution is naturally related to the half-space Bernstein theorem for graphical minimal hypersurfaces considered in \cite{edelen2022bernstein,du2023half}. The analogous rigidity results are obtained in \cite{farina2010flattening,hamel2021half,du2024flat}.

In the vectorial case, minimizing solutions can be regarded as diffuse analog of minimal partitions. Due to the lack of Modica's estimate and monotonicity formula (see discussions in \cite{afs-book,bethuel2025asymptotics}) and the non-1D nature, the structure of the solutions are far from being clear compared to the scalar case. In general dimension, the $L^1$ convergence of Allen-Cahn solution to minimal partitions was established by \cite{Baldo,sternberg1988effect,fonseca1989gradient,gazoulis} via $\Gamma$-convergence techniques.  For the 2D case, recent developments have shed light into geometric and analytic description of fine structures of the Allen-Cahn solutions. Bethuel \cite{bethuel2025asymptotics} studied the critical points of \eqref{eq:2D allen cahn} and successfully generalized results on the regularity of interfaces from the scalar case to the 2D vectorial case with the help of a novel asymptotic monotonicity formula. Fusco \cite{fusco2024connectivity} studied the connectivity of the diffuse interface and showed the existence of a connected optimal network that separates all phases. Alikakos and Fusco \cite{AF} investigated two examples with carefully designed Dirichlet boundary data and derived sharp energy lower and upper bounds and pointwise estimates.

As for the global solution to the vector-valued Allen-Cahn system, most of prior solutions satisfy certain type of symmetry hypotheses. First construction of the planar triple junction (three sectors of 120 degrees join at the center) solution is by Bronsard, Gui and Schatzman \cite{bronsard1996three}, where a global solution to \eqref{eq:2D allen cahn} was constructed in the equivariant class of the reflection group $\mathcal{G}$ corresponding to the symmetries of the equilateral triangle. The result was later extended to the 3D case with a quadruple-junction structure by Gui and Schatzman \cite{gui2008symmetric}. More recently, Fusco \cite{fusco} established the result of \cite{bronsard1996three} in the equivariant class of the rotation subgroup of $\mathcal{G}$ only, thus eliminating the two reflections. Here we also mention constructions of non-1D vector-valued solutions in various settings, see \cite{sternberg1994local,flores2001higher,schatzman2002asymmetric,fusco2017layered,monteil2017metric,smyrnelis2020connecting}.

Recently, independently by Alikakos and the author \cite{alikakos2024triple}, and Sandier and Sternberg \cite{ss2024}, established the existence of an entire minimizing solution, characterized by a triple junction structure at infinity without symmetry assumptions. Such solutions were obtained by a blow-up procedure near the triple junction of the minimizing solution of $\e \Delta u-\f{1}{\e}\nabla_u W(u)=0$ as $\e\to 0$. Using distinct methods, both studies obtained that along some subsequence $R_k\ri\infty$, the rescaled function $u_{R_k}(z):=u(R_kz)$ converges in $L^1_{loc}(\BR^2)$ to a triple junction map $\tilde{u}=\sum_{i=1}^3a_i \chi_{D_1}$, where $\{D_1,D_2,D_3\}$ gives a minimal 3-partition of $\mathbb{R}^2$. Following these two works, the author further established the uniqueness of the blow-down limit $\tilde{u}$ at infinity and the almost 1D symmetry along the sharp interface in the subsequent works \cite{geng2025uniqueness,geng2025rigidity}, thereby providing a clearer structure of the triple junction solution as well as a complete classification of 2D global minimizing solutions in terms of their tangent maps at infinity. 

The half-space solution considered in the present paper (see Definition \ref{def: minimizing sol}) appears naturally as a blow-up map at the boundary phase-transition point of the minimizing solution to $\e \Delta u-\f{1}{\e}\nabla_u W(u)=0$ on a finite domain $\Omega$, where $u|_{\pa\Om}$ has a phase segregation structure. Theorem \ref{main thm: cls of blow down} and Theorem \ref{main thm: refined behavior near interface} provide a complete classification of half-space minimizing solutions in terms of blow-down limits and characterize more refined behavior along interior interfaces. We managed to combine and generalize the techniques in \cite{alikakos2024triple,ss2024,geng2025uniqueness,geng2025rigidity} with efforts to deal with the boundary data \eqref{u0 finite energy}--\eqref{u_0 conv to a1 a2} on $\pa \BR^2_+$. 

The paper is organized as follows. In Section \ref{sec:prelim} we introduce the notation used throughout the paper and recall several basic estimates for minimizing solutions of the Allen-Cahn system, including the variational maximum principle, the vector-valued Caffarelli-C\'{o}rdoba estimate, a rough energy upper bound, and a $\Gamma$-convergence result. In Section \ref{sec:classification} we classify all blow-down limits at infinity, that is, limits of the form $\lim_{k\to\infty} u(r_k z)$ for sequences $r_k\to\infty$. We first show that every blow-down limit must be homogeneous (radially invariant), using the key estimate in Proposition \ref{prop: ene equipartition}, which establishes an almost energy equipartition property. The argument resembles \cite[Section~3]{ss2024}, although additional care is needed to handle boundary data in the energy lower and upper bounds. In particular, we obtain a structural characterization of the half-disk minimizing partition in Lemma \ref{lem: structure of min par map}, where the interface may coincide with the flat boundary, due to a loss of the strict convexity for the boundary. Similar ideas lead to the classification of all homogeneous minimal cones in the half-space with prescribed boundary data, as shown in Proposition \ref{prop:hat u}. Section \ref{sec:uniqueness} is devoted to proving the uniqueness of the blow-down limit, following the method introduced in \cite{geng2025uniqueness}. The key idea, originating from \cite{alikakos2024triple}, is a slicing argument that yields a sharp lower bound on the energy and, consequently, shows that the diffuse interface remains close to the sharp interface. Finally, in Section \ref{sec:sharp interface} we show that when the sharp interface does not coincide with the boundary, the minimizer is almost invariant in the direction of the interface; moreover, on each slice orthogonal to the interface, the map $u$ can be well-approximated by the heteroclinic connection between the corresponding two phases. The proof parallels the arguments in \cite{geng2025rigidity}.

\section{Preliminaries}\label{sec:prelim}
\subsection{Notations}
We introduce the following notations for use throughout the paper. 
\begin{itemize}\itemsep5pt
    \item $z=(x,y)$ denotes a 2D point. The origin $(0,0)$ is denoted by $O$.
    \item $B_r(z)$ is the disk centered at $z$ with radius $r$.  $B_r^+(z)$ denotes the upper half disk centered at $z$ with radius $r$. When $z$ is the origin, we simply write $B_r$ and $B_r^+$.
    \item Denote by $A^+_{r_1,r_2}(z_0)$ the upper half annulus centered at $z_0$: $A^+_{r_1,r_2}(z_0):=\{z: r_1<|z-z_0|<r_2,\, z-z_0\in B_{r_2}^+\}$. When $z_0$ is the origin, we simply write $A^+_{r_1,r_2}$.
    \item In this paper, we use three different notions of distance. For the Euclidean distance of two points $z_1,z_2$ on the domain $\Omega$, we denote it by $\dist(z_1,z_2)$. The Euclidean distance on the target domain is denoted by $|\cdot|$, for instance $|u_{\e}(z_1)-a_i|$. Finally, we denote by $d(\cdot,\cdot)$ the degenerate Riemannian metric on $\mathbb{R}^2$ defined in \eqref{def:metric d}.
    \item $\tilde{u}$ denotes a minimizing partition map in the sense of Definition \ref{def: min par map}.
    \item For a fixed minimizing partition map $\tilde{u}$, set $S_i:=\{\tilde{u}^{-1}(a_i)\}$. 
    \item For $\alpha \in [0,\pi]$, let $l_\alpha$ denote the ray emanating from the origin in the direction $\alpha$, that is, $l_\alpha := \{ (r\cos\alpha, r\sin\alpha) : r > 0 \}.$
    \item Without specific explanation, $C$ denotes a constant that depends on the potential $W$, $M$ in the uniform bound \eqref{uniform bound} and sometimes also on the boundary data $u_0$. $C$ may have distinct values in various estimates.
\end{itemize}

\subsection{Basic estimates for vector-valued Allen-Cahn minimizers}

First we recall the following 1D energy estimate.

\begin{lemma}[Lemma 2.3 in \cite{AF}]\label{lemma: 1D energy estimate}
Take $i\neq j \in \{1,...,N\}$, $\d <\d_W$ and $s_-<s_+$ be two real numbers. Let $v:(s_-,s_+)\ri \BR^2$ be a smooth map that minimizes the energy functional 
\beqo
J_{(s_-,s_+)}(v):=\int_{s_-}^{s_+} \left(\f12|\na v|^2+W(v)\right)\,dx 
\eeqo
subject to the boundary condition 
\beqo
\max\{|v(s_-)-a_i|,|v(s_+)-a_j|\}\leq\delta.
\eeqo
Then
\beqo
J_{(s_-,s_+)}(v)\geq \sigma_{ij}-C\delta^2,
\eeqo
where $C=C(W)$. Moreover, we have 
\begin{equation*}
    d(v(s_-), v(s_+))\geq \sigma_{ij}-C\d^2.
\end{equation*}
\end{lemma}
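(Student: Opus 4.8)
The plan is to first reduce both conclusions to a single statement about the degenerate metric $d$. Indeed, for any smooth curve $v:(s_-,s_+)\to\BR^2$, completing the square gives the pointwise inequality $\f12|v'|^2 + W(v) \geq \sqrt{2}\,|v'|\sqrt{W(v)}$, and integrating yields $J_{(s_-,s_+)}(v) \geq \sqrt{2}\int_{s_-}^{s_+}|v'|\sqrt{W(v)}\,dx \geq d(v(s_-),v(s_+))$, by the very definition \eqref{def:metric d} of $d$ (the curve $v$, suitably reparametrized to $[0,1]$, is a competitor). So the first claimed bound on $J_{(s_-,s_+)}(v)$ follows immediately from the second claimed bound on $d(v(s_-),v(s_+))$, and it suffices to prove $d(p,q)\geq \sigma_{ij}-C\delta^2$ whenever $|p-a_i|\leq\delta$ and $|q-a_j|\leq\delta$.

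For that, I would use the triangle inequality for $d$: $d(a_i,a_j) \leq d(a_i,p) + d(p,q) + d(q,a_j)$, so $d(p,q) \geq \sigma_{ij} - d(a_i,p) - d(a_j,q)$, using $d(a_i,a_j)=\sigma_{ij}$. The task is then to show $d(a_i,p)\leq C\delta^2$ when $|p-a_i|\leq\delta$ (and similarly at $a_j$). Here I would use the straight segment $\gamma(t)=a_i + t(p-a_i)$ as a competitor in \eqref{def:metric d}, giving $d(a_i,p)\leq\sqrt{2}\,|p-a_i|\sup_{|w-a_i|\leq\delta}\sqrt{W(w)}$. Since $W(a_i)=0$, $\nabla W(a_i)=0$, and $\nabla^2 W(a_i)$ is bounded by $c_2$ (Hypothesis $(\mathrm{H}_1)$), a second-order Taylor expansion gives $W(w)\leq C|w-a_i|^2 \leq C\delta^2$ for $w$ near $a_i$; here the threshold $\delta<\delta_W$ is exactly what ensures we stay in the neighborhood where this quadratic bound is valid. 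Hence $\sqrt{W(w)}\leq C\delta$ and $d(a_i,p)\leq C\delta^2$. Combining, $d(p,q)\geq\sigma_{ij}-C\delta^2$, which closes both parts.

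The only mild subtlety — and the place I'd be most careful — is the first reduction step: the map $v$ is only assumed smooth on the open interval $(s_-,s_+)$ with prescribed boundary values at the endpoints, so I should either note that a minimizer of a convex-in-gradient functional with fixed endpoint data is a classical solution of the Euler–Lagrange ODE and hence extends smoothly to the closed interval, or simply work on $[s_-+\epsilon, s_+-\epsilon]$ and let $\epsilon\to 0$, using continuity of $v$ up to the boundary. Either way this is routine. A second point worth stating is that \eqref{def:metric d} requires $C^1$ competitor curves, so after reparametrizing $v$ by arc-length or affinely to $[0,1]$ one should check the reparametrization preserves $C^1$ regularity (it does, since $v$ is smooth; if $v'$ vanishes somewhere one can perturb slightly or invoke density of regular curves). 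No genuinely hard estimate appears here — the content is entirely the completing-the-square trick plus the quadratic nondegeneracy of $W$ at its wells.
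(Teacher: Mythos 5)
Your proposal is correct, and it is essentially the standard argument: bound the energy below by $\sqrt{2}\int|v'|\sqrt{W(v)}\geq d(v(s_-),v(s_+))$ via Young's inequality, then use the triangle inequality for $d$ together with the quadratic bound $W(w)\leq C|w-a_i|^2$ near each well (which gives $d(a_i,p)\leq C\delta^2$ for $|p-a_i|\leq\delta$). The paper itself does not reproduce a proof — it cites the result from \cite{AF} — but your argument is the one used there; note also that your proof never uses the minimality of $v$, so the lower bound in fact holds for every smooth competitor with the stated endpoint constraints, which is harmless (and slightly stronger than what is claimed).
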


The following vectorial maximum principle for $u$ will be useful in our analysis.

\begin{lemma}[Variational maximum principle, \cite{AF2}]\label{lemma: maximum principle}
Assume ($\mathrm{H}_1$), ($\mathcal{H}_2$), ($\mathrm{H}_3$) or ($\mathrm{H}_3'$). Let $u\in W^{1,2}_{loc}(\BR_+^2,\BR^2)$ be a minimizing solution of \eqref{eq:2D allen cahn}. There exists a positive constant $r_0=r_0(W)$ such that if $u$ satisfies 
\begin{equation*}
    | u(z)-a_i|\leq r \text{ on }\pa D, \ \ \text{for some }r<r_0,\ 1\leq i\leq N,\ D\subset\BR_+^2,
\end{equation*}
    then 
\begin{equation*}
        | u(z)-a_i|\leq r,\ \ \forall z\in D.
\end{equation*}
\end{lemma}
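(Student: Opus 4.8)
\textbf{Proof proposal for Lemma \ref{lemma: maximum principle}.}

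The plan is to argue by contradiction via a competitor construction, exploiting the minimality of $u$ together with a pointwise convexity property of $W$ near each well $a_i$. First I would fix $i$ and choose $r_0=r_0(W)$ small enough that on the closed ball $\overline{B_{r_0}(a_i)}\subset\BR^2$ the potential $W$ is uniformly convex; this is possible because ($\mathrm{H}_1$) gives $\na^2 W(a_i)\geq c_1 I>0$ and $W\in C^2$, so $\na^2 W(p)\geq \frac{c_1}{2}I$ for all $p$ with $|p-a_i|\leq r_0$. Shrinking $r_0$ further if necessary, I would also arrange that the closed ball $\overline{B_{r_0}(a_i)}$ is disjoint from the other wells and that the nearest-point projection $\Pi$ onto $\overline{B_r(a_i)}$ (for any $r\le r_0$) is well-defined and $1$-Lipschitz — this is just the projection onto a convex set. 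The key algebraic fact is that for such $r$, the radial retraction $\Pi$ onto $\overline{B_r(a_i)}$ satisfies both $|\na(\Pi\circ u)|\le|\na u|$ a.e.\ (since $\Pi$ is $1$-Lipschitz) and $W(\Pi(p))\le W(p)$ for every $p$ (because $W$ restricted to the segment from $p$ to $\Pi(p)$ is convex, vanishes to first order in the right direction at the minimum, and is radially monotone increasing away from $a_i$ on $\overline{B_{r_0}(a_i)}$; more carefully, $\frac{d}{dt}W(a_i+t(p-a_i))\ge 0$ for $t\in[0,1]$ by convexity and $\na W(a_i)=0$).

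Next, suppose for contradiction that $|u(z)-a_i|\le r$ on $\pa D$ but $D':=\{z\in D:|u(z)-a_i|>r\}$ is nonempty. Define the competitor $v:=\Pi\circ u$ on $D'$ and $v:=u$ on $\BR_+^2\setminus D'$; then $v=u$ on $\pa D'$ (where $|u-a_i|=r$, so $\Pi$ is the identity), hence $v-u\in W^{1,2}_0$ of a bounded open set contained in $\BR_+^2$, and $v$ agrees with $u$ near $\pa\BR_+^2$ if $D'$ is interior — one must check $D'$ does not touch $\pa\BR_+^2$, which follows since $u=u_0$ there and $u_0\to a_1,a_2$; but actually for the local statement it suffices that the perturbation $v-u$ is admissible in Definition \ref{def: minimizing sol}, i.e.\ compactly supported in $\Om\cap\BR_+^2$ for a suitable bounded $\Om$, which holds because $D'\Subset D\subset\BR_+^2$ when $D$ is bounded (the general case follows by exhausting $D$). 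By the two inequalities above, $J(v,\Om\cap\BR_+^2)\le J(u,\Om\cap\BR_+^2)$, with strict inequality on the open set $D'$ unless $\na u=0$ a.e.\ on $D'$; minimality forces $v=u$, i.e.\ $\na u\equiv 0$ on $D'$ and $W(u)=W(\Pi\circ u)$ a.e.\ there. Then $u$ is locally constant on each component of $D'$, equal to some value $p$ with $|p-a_i|>r$, but continuity of $u$ up to $\pa D'$ forces $|p-a_i|=r$, a contradiction. Hence $D'=\emptyset$.

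The main obstacle I anticipate is not the convexity/projection argument itself — that is classical (this is precisely the vector-valued Caffarelli–Córdoba / Alikakos–Fusco type argument cited as \cite{AF2}) — but rather handling the interaction with the Dirichlet boundary $\pa\BR_+^2$: one must ensure the truncation competitor is admissible, i.e.\ that modifying $u$ on $D'$ does not violate the boundary condition $u=u_0$ on $\pa\BR_+^2$. The cleanest route is to note that the statement only asserts the conclusion for $D\subset\BR_+^2$, and if $D$ is taken to be an \emph{open} subset of the \emph{open} half-plane then $\overline{D'}\subset D\Subset\BR_+^2$ is automatic for bounded $D$, so no boundary issue arises; for unbounded $D$ one applies the bounded case to $D\cap B_\rho$ and lets $\rho\to\infty$, using $|u-a_i|\le r$ on $\pa D$ together with the uniform bound \eqref{uniform bound}. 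A secondary technical point is justifying $W(\Pi(p))\le W(p)$ globally for all $p\in\BR^2$ (not just $p\in\overline{B_{r_0}(a_i)}$): for $|p-a_i|$ large one uses $\na_u W(u)\cdot u>0$ from ($\mathrm{H}_1$) to propagate radial monotonicity of $W$ in the variable $|p-a_i|$ outward, which guarantees $t\mapsto W(a_i+t(p-a_i))$ is nondecreasing on $[0,1]$ for every $p$; combined with the local convexity near $a_i$ this closes the gap. Once these boundary/growth bookkeeping items are settled, the contradiction is immediate and the proof is short.
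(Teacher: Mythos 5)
The paper offers no proof of this lemma at all; it is quoted verbatim from \cite{AF2}. So your proposal must stand on its own, and it does not: the gradient half of your argument is fine (the nearest-point projection $\Pi$ onto the convex set $\overline{B_r(a_i)}$ is $1$-Lipschitz, so $|\nabla(\Pi\circ u)|\le|\nabla u|$, and your admissibility bookkeeping for $D\subset\BR^2_+$ is acceptable), but the potential half contains a genuine error. The inequality $W(\Pi(p))\le W(p)$ for \emph{all} $p\in\BR^2$ is false for a multi-well potential: take $p=a_j$ with $j\neq i$. Then $W(p)=0$, while $\Pi(p)\in\partial B_r(a_i)$ where $W>0$, since by ($\mathrm{H}_1$) the only zeros of $W$ are the wells. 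Your proposed repair via ``$\nabla_uW(u)\cdot u>0$ for $|u|>M$'' does not close this gap: that hypothesis gives outward monotonicity of $W$ along rays from the \emph{origin}, and only in the far region $|u|>M$; it says nothing about rays emanating from $a_i$, and nothing at all about the bounded intermediate region $r_0\le|p-a_i|\le M+|a_i|$, which contains all the other wells. Hence $t\mapsto W(a_i+t(p-a_i))$ is not nondecreasing on $[0,1]$ in general, and your competitor $v=\Pi\circ u$ can strictly \emph{increase} the potential energy precisely on the set where $u$ forms an island of another phase inside $D$ --- which is exactly the configuration the lemma is designed to exclude. The pointwise comparison $J(v)\le J(u)$ therefore fails, and the contradiction collapses.

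This failure is not a technicality; it is the reason the result is called a \emph{variational} maximum principle. The monotonicity of $W$ along rays from $a_i$ is available (via convexity of $W$ near $a_i$) only inside a fixed ball $B_{r_0}(a_i)$, and the set $\{|u-a_i|>r_0\}\cap D$ must be handled by an argument that uses minimality globally rather than pointwise: in \cite{AF2} the possible increase of the potential term on the far set (which is at most of order $\max_{\partial B_r(a_i)}W=O(r^2)$ per unit area) is played off against the definite energy excess that $u$ must pay to travel from $|u-a_i|=r$ to $|u-a_i|=r_0$. Your write-up correctly identifies the projection competitor as the engine of the proof but demotes the only genuinely hard step to a ``secondary technical point'' and resolves it with a false claim. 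A correct proof needs the two-region analysis (or an equivalent mechanism exploiting minimality) to control $\int_{\{|u-a_i|>r_0\}}\bigl(W(\Pi\circ u)-W(u)\bigr)$; as written, the argument proves the lemma only under the additional (and here unavailable) hypothesis that $W$ is radially nondecreasing about $a_i$ on all of $\BR^2$.
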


We also recall the following vector version of the Caffarelli--C\'{o}rdoba density estimate in \cite{AF3}.
\begin{lemma}[\cite{AF3}]\label{lem:cafarelli-cordoba}
Assume ($\mathrm{H}_1$), ($\mathcal{H}_2$), ($\mathrm{H}_3$) or ($\mathrm{H}_3'$). Let $u\in W^{1,2}_{loc}(\BR_+^2,\BR^2)$ be a minimizing solution of \eqref{eq:2D allen cahn}. If for some $r_0,\lambda,\mu >0$, $1\leq i\leq N$, $z\in\Om$,
\begin{equation*}
    \mathcal{L}^2(B_{r_0}(z)\cap\{|u-a_i|>\lambda\})\geq \mu,\quad B_{r_0}(z)\subset \BR_+^2,
\end{equation*}
then there exists a constant $C(r_0,\lambda,\mu)>0$  such that 
\begin{equation*}
    \mathcal{L}^2(B_{r}(z)\cap\{|u-a_i|>\lambda\})\geq Cr^2,\ \ \forall\, r\in[r_0,\dist(z,\pa \BR_+^2)].
\end{equation*}
\end{lemma}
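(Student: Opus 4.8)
The plan is to adapt the classical Caffarelli–Córdoba iteration to the vectorial minimizing setting, following \cite{AF3}. First I would reformulate the hypothesis in terms of energy: if a definite fraction of the ball $B_{r_0}(z)$ lies outside the sublevel set $\{|u-a_i|\le\lambda\}$, then the energy $J(u,B_{r_0}(z))$ is bounded below by a positive constant $c_0=c_0(r_0,\lambda,\mu)$. This follows because on the set $\{|u-a_i|>\lambda\}$ either $W(u)$ is bounded below away from zero (if $u$ stays in a compact region where $W>0$), or $u$ must travel a definite distance in the target, forcing a lower bound on $\int|\nabla u|^2$ via a slicing/coarea argument on $B_{r_0}(z)$; in either case one extracts $J(u,B_{r_0}(z))\ge c_0$.

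The heart of the argument is a De Giorgi–type iteration over dyadic radii. The key competitor estimate: for a minimizer $u$, comparing with the function that equals $a_i$ inside $B_\rho(z)$, interpolates radially over a thin shell $B_{(1+\theta)\rho}(z)\setminus B_\rho(z)$, and equals $u$ outside, yields an inequality of the schematic form
\begin{equation*}
    J(u,B_\rho(z)) \le C\Big( \frac{1}{\theta\rho}\int_{\pa B_\rho(z)}|u-a_i|^2\,d\mathcal{H}^1 + \theta\rho\cdot(\text{energy density on the shell}) + \theta\rho\cdot\sup W\Big),
\end{equation*}
but the cleaner route (and the one I expect \cite{AF3} uses) is to work directly with the quantity $e(\rho):=J(u,B_\rho(z))$, establish a differential inequality $e(\rho)\le C\rho\, e'(\rho) + (\text{lower order})$ coming from minimality against the radial-interpolation competitor, and integrate it. Combined with the lower bound $e(r_0)\ge c_0$ and the fact that $e$ is monotone, this forces $e(r)\ge C r^2$ for $r\in[r_0,\dist(z,\pa\mathbb{R}^2_+)]$ — note the constraint $B_r(z)\subset\mathbb{R}^2_+$ is exactly what lets us run the comparison purely in the interior without touching the boundary data. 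Finally, one converts the energy growth $J(u,B_r(z))\ge Cr^2$ back into a measure statement: by the uniform gradient bound \eqref{uniform bound}, $\int_{B_r(z)\cap\{|u-a_i|\le\lambda\}}(\frac12|\nabla u|^2+W(u))\,dz$ can only be large if $\mathcal{L}^2(B_r(z)\cap\{|u-a_i|\le\lambda\})$ is comparable to $r^2$... more precisely, one shows $J(u,B_r(z)\cap\{|u-a_i|>\lambda\}) \le C\,\mathcal{L}^2(B_r(z)\cap\{|u-a_i|>\lambda\})$ using the $L^\infty$ bounds on $u$ and $\nabla u$, while the energy contribution from the region where $|u-a_i|\le\lambda$ is controlled by a separate argument (or is simply part of what we bound); matching this against $J(u,B_r(z))\ge Cr^2$ gives the claimed lower bound on the measure.

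The main obstacle I anticipate is the passage that replaces the scalar maximum principle used in Caffarelli–Córdoba. In the scalar case one has a clean ordering $|u|\le 1$ and the competitor/minimality estimate interacts well with the one-well structure; in the vectorial multi-well case one must instead rely on Lemma \ref{lemma: maximum principle} (the variational maximum principle, to confine $u$ near $a_i$ once it is close on a boundary sphere) together with Lemma \ref{lemma: 1D energy estimate} (the 1D lower bound $\sigma_{ij}-C\delta^2$, to price the cost of $u$ leaving a $\delta$-neighborhood of $a_i$ along radial slices). Coordinating these two — ensuring the shells on which we interpolate are chosen so that $|u-a_i|$ is small in an averaged sense (via a mean-value/Chebyshev choice of the dyadic radius) so that the maximum principle applies on an honest sphere — is the delicate bookkeeping. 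Since this is exactly the content of \cite{AF3}, I would cite it for these lemmas' interplay and reproduce only the iteration in the half-space geometry, stressing that the interior condition $r\le\dist(z,\pa\mathbb{R}^2_+)$ keeps everything away from $\pa\mathbb{R}^2_+$ so no new boundary phenomena enter.
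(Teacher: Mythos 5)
The paper does not prove this lemma: it is quoted verbatim from \cite{AF3}, so there is no in-paper argument to compare against. Judged on its own terms, your sketch contains a fatal quantitative error at its core. You propose to derive $e(\rho):=J(u,B_\rho(z))\le C\rho\,e'(\rho)+\dots$ from minimality and integrate to conclude $e(r)\ge Cr^2$, then convert this energy growth back into a measure bound. But the energy of a minimizer in a disk of radius $r$ grows \emph{linearly}, not quadratically --- this is exactly Lemma \ref{lem:rough ene upper bdd}, $J(u,B_r(z)\cap\BR_+^2)\le Cr$ --- so the intermediate claim $J(u,B_r(z))\ge Cr^2$ is false for large $r$ and cannot be the route to the density estimate. (Separately, integrating $e\le C\rho e'$ gives growth $r^{1/C}$ with an exponent depending on $C$, not the dimensionally sharp exponent $2$.) The final conversion step is also unsound: even granting an energy lower bound, the energy carried by the region $\{|u-a_i|\le\lambda\}$ is not negligible --- $W(u)$ and $|\nabla u|^2$ are merely bounded there, so that region alone can contribute $O(r^2)$ --- and you acknowledge this gap only with ``controlled by a separate argument.''

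The actual Caffarelli--C\'ordoba/Alikakos--Fusco scheme works directly with the volume $V(r):=\mathcal{L}^2(B_r(z)\cap\{|u-a_i|>\lambda\})$, not with the energy. Minimality against a competitor that pulls $u$ toward $a_i$ inside $B_r$ (using the nondegeneracy of $W$ at $a_i$ from ($\mathrm{H}_1$)) bounds a combination of $V(r)$ and the potential energy by boundary/annulus terms; the coarea formula together with the \emph{isoperimetric inequality} --- the ingredient entirely absent from your sketch, and the one responsible for the exponent $2=n$ --- converts this into a discrete differential inequality of the form $V(r)^{1/2}\le C\,(V(r+1)-V(r))+\dots$, and an elementary iteration lemma then yields $V(r)\ge Cr^2$. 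Your observations that the hypothesis $B_r(z)\subset\BR_+^2$ keeps the argument purely interior, and that Lemma \ref{lemma: maximum principle} substitutes for the scalar ordering, are correct but peripheral; the iteration you describe is run on the wrong quantity.
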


Next we derive a rough energy upper bound for $u$ on any disk $B_r(z)$.
\begin{lemma}\label{lem:rough ene upper bdd}
Assume ($\mathrm{H}_1$), ($\mathcal{H}_2$), ($\mathrm{H}_3$) or ($\mathrm{H}_3'$). Let $u$ be a minimizing solution of \eqref{eq:2D allen cahn} satisfying \eqref{uniform bound}. Then there exists a constant $C=C(W,M)$ such that 
\begin{equation}\label{est: rough upper bdd}
    J(u,B_{r}(z)\cap \BR_+^2)\leq Cr, \quad \forall \,z\in \overline{\BR_+^2} \text{ and }r>0.
\end{equation}
\end{lemma}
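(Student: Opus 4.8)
The plan is to construct an explicit competitor that has energy at most $Cr$ and then invoke minimality. Fix $z = (z_1, z_2) \in \overline{\BR_+^2}$ and $r > 0$. The natural idea is to replace $u$ inside $B_{r/2}(z) \cap \BR_+^2$ by a function that transitions from $u$ on the ``outer'' boundary to a constant value $a_i$ (for some fixed well, say $a_1$) in the interior, using an annular region $A_{r/2, r}(z) \cap \BR_+^2$ as the transition zone. Concretely, I would define a competitor $w$ that equals $u$ outside $B_r(z) \cap \BR_+^2$, equals $a_1$ on $B_{r/2}(z) \cap \BR_+^2$, and interpolates linearly in the radial variable $t = |\zeta - z|$ on the half-annulus: $w(\zeta) = \phi(t) u(\zeta) + (1 - \phi(t)) a_1$ where $\phi$ is a cutoff with $\phi \equiv 1$ for $t \geq r$, $\phi \equiv 0$ for $t \leq r/2$, $|\phi'| \leq C/r$.

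The key point is that such a competitor must be admissible: its boundary trace on $\pa \BR_+^2$ must still agree with $u_0$. This is where one has to be slightly careful, and the cleanest fix is to choose the replacement value $a_1$ only when $z$ is far from the origin on the positive $x$-axis side, $a_2$ when on the negative side, and — when $B_r(z)$ meets a bounded piece of $\pa \BR_+^2$ near the origin — to not modify $u$ near $\pa \BR_+^2$ at all, instead only replacing $u$ on a disk compactly contained in $\BR_+^2$ when $\dist(z, \pa \BR_+^2)$ is comparable to $r$, and handling the case $\dist(z, \pa\BR_+^2) \ll r$ by covering $B_r(z) \cap \BR_+^2$ with $O(1)$ many disks of the first type plus a fixed-size neighborhood of the origin whose energy is bounded by \eqref{u0 finite energy}. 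Actually, the slickest route: since $u_0$ has finite energy and $u_0 \to a_1, a_2$ at infinity, the trace contributes only $O(r)$ to any reasonable competitor's boundary energy, so one can afford the simple radial interpolation everywhere, accepting that near $\pa \BR_+^2$ the trace of $w$ is a convex combination of $u_0$ and $a_i$; but then $w$ is no longer an admissible competitor for $u$ since Definition \ref{def: minimizing sol} requires $v \in C_0^1$. Therefore I would instead keep $w = u$ in a thin strip $\{0 < y < \e\}$ and only interpolate above it, which adds at most $C\e^{-1} \cdot r\e = Cr$ to the energy from the strip-to-annulus matching — uniformly bounded since $|u| \leq M$.

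With $w$ so constructed, estimate $J(w, B_r(z) \cap \BR_+^2)$: the gradient term splits as $|\nabla w|^2 \leq C(|\nabla u|^2 + |\phi'|^2 |u - a_1|^2) \leq C(M^2 + M^2 r^{-2})$, and $W(w) \leq \sup_{|v| \leq M} W(v) =: C(W, M)$ by \eqref{uniform bound}; integrating over the region $B_r(z) \cap \BR_+^2$, whose area is at most $\pi r^2$, gives $J(w, B_r(z) \cap \BR_+^2) \leq C(W,M)(r^2 + 1) \cdot$ — wait, this yields $Cr^2$, not $Cr$. The sharpening to $Cr$ comes from localizing the modification to the \emph{half-annulus} $A_{r/2,r}(z) \cap \BR_+^2$ only, on which $|\nabla w - \nabla u|$ is supported: there $|\nabla w|^2 \leq C r^{-2}$ contributes $\int_{A} Cr^{-2} \leq Cr^{-2} \cdot r^2 = C$, while on the inner half-disk $W(a_1) = 0$ and $\nabla w = 0$, so the only genuinely $O(r^2)$-looking term — $\int_{B_{r/2}} W(u)$ in the \emph{original} energy — is precisely what we are eliminating. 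So $J(w, B_r(z)\cap\BR_+^2) \le J(u, (A_{r/2,r}(z)\cup(\text{strip}))\cap\BR_+^2) + C$. This still is not obviously $O(r)$ unless I induct on dyadic scales. The correct and standard argument: prove \eqref{est: rough upper bdd} by a dyadic iteration. Let $g(r) := J(u, B_r(z) \cap \BR_+^2)$. Minimality against the competitor above gives $g(r/2) \leq g(r) - g(r/2) + Cr$ restricted to the annulus, i.e. controlling $\int_{A_{r/2,r}} W(u)$; combined with a Caccioppoli-type inequality from Lemma \ref{lem:cafarelli-cordoba}-style reasoning one gets $g(2^{k+1} \rho) - g(2^k \rho) \leq C 2^k \rho$, and summing the geometric-in-increment series yields $g(r) \leq g(\rho) + C r \leq Cr$ for $r \geq \rho$ fixed, with the small scales $r \leq \rho$ handled trivially by $g(r) \leq C(M) r^2 \leq C r$.

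The main obstacle I anticipate is the bookkeeping at the boundary $\pa \BR_+^2$: ensuring the competitor is genuinely admissible (equal to $u$, hence to $u_0$ on $\pa\BR_+^2$, with compactly supported perturbation) while still getting the perturbation's energy down to $O(r)$ rather than $O(r^2)$. The resolution is to never touch $u$ within the $\e$-strip near $\pa\BR_+^2$ and to absorb the resulting $O(\e \cdot r^{-1} \cdot \text{length})$ matching cost — choosing $\e$ a fixed fraction of $\dist(z, \pa\BR_+^2)$ when that distance is $\gtrsim r$, and otherwise doing the whole iteration with base disks anchored away from the corner and the single $O(1)$ contribution near the origin from \eqref{u0 finite energy}. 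Everything else is routine: the pointwise bound \eqref{uniform bound} makes $W(u)$ and $|\nabla u|^2$ bounded, so all integrals are controlled by areas, and the dyadic summation does the rest.
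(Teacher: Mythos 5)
There is a genuine gap. Your competitor interpolates radially between $u$ and $a_1$ over the \emph{thick} half-annulus $A_{r/2,r}(z)$, whose area is $O(r^2)$. On that region $w=\phi u+(1-\phi)a_1$ is a convex combination of $u$ and $a_1$, and $W(w)$ is in general bounded below by a positive constant wherever $u$ is not close to $a_1$ (e.g.\ wherever $u$ is near another well $a_j$), so $\int_{A_{r/2,r}}W(w)$ can genuinely be of order $r^2$; likewise $|\nabla w|^2\geq \phi^2|\nabla u|^2$ is only bounded by $M^2$, not by $Cr^{-2}$. You notice the estimate is not closing, but the rescue you propose — the dyadic iteration — rests on the increment inequality $J(w,A_{r/2,r})\leq J(u,A_{r/2,r})+Cr$ (your ``$g(r/2)\leq g(r)-g(r/2)+Cr$''), which is asserted rather than proved and does not follow from your construction for exactly the reason above: $W(w)$ is not controlled by $W(u)$ on the interpolation region, and the appeal to a ``Caccioppoli-type inequality from Lemma \ref{lem:cafarelli-cordoba}-style reasoning'' is not substantiated (that lemma is a density estimate for level sets, not an energy inequality). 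So neither the one-step estimate nor the iteration is established.

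The fix, which is what the paper does, is to make every region where the competitor is nonconstant have width $O(1)$ rather than $O(r)$: set $v\equiv a_1$ on the bulk $B^+_{r-1}\cap\{y>1\}$ (zero energy there), interpolate vertically between $a_1$ and $u_0$ in the strip $\{0\leq y\leq 1\}$, interpolate radially between $a_1$ and $u|_{\partial B_r}$ in the width-one annulus $A^+_{r-1,r}$, and use a bounded Lipschitz extension in the two $O(1)\times O(1)$-scale corner regions. On these thin layers $|v|$ and $|\nabla v|$ are bounded by $C(M)$ (using $|\nabla u|\leq M$ from \eqref{uniform bound}, which in particular bounds $|u_0'|$), so the energy density is $C(W,M)$ and the total area is $O(r)$, giving $J(v,B_r^+)\leq Cr$ in one step with no iteration. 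Your instinct about boundary admissibility (never altering the trace on $\partial\BR_+^2$, matching to $u_0$ through a thin layer) is sound and is essentially how the paper handles $\Omega_2$, $\Omega_4$, $\Omega_5$; the missing idea is solely the $O(1)$ thickness of the transition layers.
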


\begin{proof}
    We take $z$ as the origin. The proof is similar for general $z\in \overline{\BR_+^2}$. Since $|\na u|$ and $W(u)$ is uniformly bounded by \eqref{uniform bound}, it suffices to prove \eqref{est: rough upper bdd} for $r$ large enough.

    Define the following regions
    \begin{equation}
    \label{def:Om 1-5}
    \begin{split}
        &\Om_1:=B_{r-1}^+\cap \{(x,y):y> 1\},\\
        &\Om_2:=\{(x,y): -\sqrt{r^2-2r}\leq x\leq \sqrt{r^2-2r},\ 0\leq y\leq 1\},\\
        &\Om_3:=A_{r-1,r}^+\cap \{(x,y): \frac{y}{|x|}\geq\f{1}{\sqrt{r^2-2r}}\},\\
        &\Om_4:= \{(x,y)\in B_r^+: x>\sqrt{r^2-1},\ \frac{y}{x}<\f{1}{\sqrt{r^2-2r}}\},\\
        &\Om_5:= \{(x,y)\in B_r^+: x<-\sqrt{r^2-1},\ \frac{y}{|x|}<\f{1}{\sqrt{r^2-2r}}\}.
    \end{split}
    \end{equation}

\begin{figure}[htb!]
\begin{tikzpicture}[scale=0.9, line cap=round, line join=round]
  \def\R{5.2}      
  \def\r{4.0}      
 
  \coordinate (O) at (0,0) ;
  \coordinate (PR) at ({\r*cos(18)},{\r*sin(18)});
  \coordinate (PL) at ({\r*cos(162)},{\r*sin(162)});
  \coordinate (QR) at ({\R*cos(18)},{\R*sin(18)});
  \coordinate (QL) at ({\R*cos(162)},{\R*sin(162)}); 

  \draw[thick] (-\R,0) -- (\R,0);
  \draw[thick] (\R,0) arc[start angle=0, end angle=180, radius=\R];
  \draw[thick] ({\r*cos(18)},{\r*sin(18)}) arc[start angle=18, end angle=162, radius=\r];
  \draw[thick,dashed] (\r,0) arc[start angle=0, end angle=18, radius=\r];
  \draw[thick,dashed] ({\r*cos(162)},{\r*sin(162)}) arc[start angle=162, end angle=180, radius=\r];

  \draw[thick] (PL) -- (PR);

  \draw[thick] (QL) -- (PL);
  \draw[thick] (PR) -- (QR);
  \draw[thick,dashed] (O) -- (PR);
  \draw[thick,dashed] (O) -- (PL);
 
  \draw[thick] (PL) -- ({\r*cos(162)},0);
  \draw[thick] (PR) -- ({\r*cos(18)},0);
  
  \node[above] at (0,0.3)  {$\Omega_2$};
  \node[above] at (0,4.3)  {$\Omega_3$};
  \node[above] at (4.5,0.3)  {$\Omega_4$};
  \node[above] at (-4.5,0.3) {$\Omega_5$};
  \node[above] at (0,2.2) {$\Omega_1$};

\end{tikzpicture}
\caption{$\Omega_i$ for $i=1,...,5$}
\end{figure}
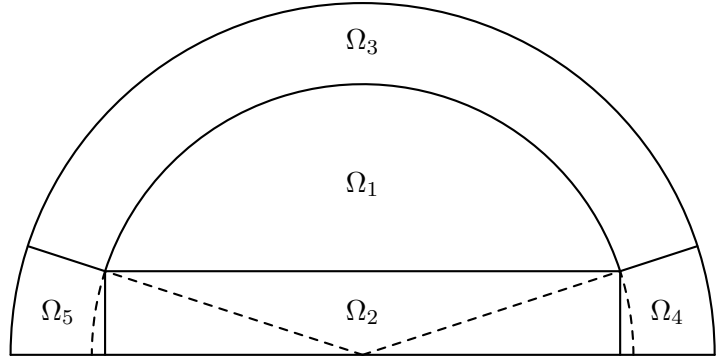

    By the definition we have 
    \begin{equation*}
        \overline{B_r^+}=\bigcup\limits_{i=1}^5 \overline{\Om_i}.
    \end{equation*}
    We construct an energy competitor $v$ in $B_r^+$. Set 
    \begin{equation*}
        v\equiv a_1 \text{ on }\overline{\Om_1}.
    \end{equation*}
    On $\Om_2$, let $v$ vertically interpolate between $a_1$ and $u_0$:
    \begin{equation*}
        v(x,y)=a_1y+u_0(x)(1-y),\quad (x,y)\in \Om_2.
    \end{equation*}
    On $\Om_3$, let $v$ interpolate between $a_1$ and $u|_{\pa B_r}$ in the radial direction:
    \begin{equation*}
        v(s,\theta)= (s-r+1) u(r,\theta)+ (r-s) a_1,\quad r-1<s<r,\, \arcsin{\f{1}{r-1}}\leq\theta\leq\pi- \arcsin{\f{1}{r-1}}.
    \end{equation*}
    For the domain $\Om_4$, we have 
    \begin{align*}
        \pa\Om_4 &= \{(x,0): \sqrt{r^2-2r}\leq x\leq r\}\cup \{(\sqrt{r^2-2r},y):0\leq y\leq 1\} \\
        &\qquad \cup \{(s\f{\sqrt{r^2-2r}}{r-1},s\f{1}{r-1}): r-1\leq s\leq r\}\cup \{(r\cos\theta,r\sin\theta): 0\leq \theta\leq \arcsin\f{1}{r-1}\}\\ 
        &=: \g_1\cup\g_2\cup\g_3\cup\g_4.
    \end{align*}
    Note that $\g_2\subset \pa \Om_2$ and $\g_3\subset\pa\Om_3$, hence $v$ is already defined on $\g_2\cup\g_3$. $\g_1,\,\g_4\subset \pa B_r^+$, we set $v=u$ on $\g_1\cup\g_4$. It is not hard to check that 
    \begin{equation*}
        |v|\leq 2M,\ |\pa_T v|\leq 2M\ \text{ on }\pa \Om_4.
    \end{equation*}
    Since $\Om_4$ is a Lipschitz domain, we can extend $v$ to $\Om_4$ such that 
    \begin{equation}\label{est: v, tang deri v}
        |v|\leq 2M,\ |\na v|\leq C(M)\ \text{ in }\Om_4.
    \end{equation}
    Finally we define $v$ in $\Om_5$ similarly. By checking the definition of $v$ in $\Om_2$ and $\Om_3$, the estimate \eqref{est: v, tang deri v} also holds in $\Om_2\cup\Om_3$. We estimate the energy 
    \begin{equation*}
    J(v,B_r^+)=\sum\limits_{i=2}^5 \int_{\Om_i}\left(\f{1}{2}|\na v|^2+W(v)\right)\,dz\leq C(W,M) \sum\limits_{i=2}^5|\Om_i|=C(W,M)r.
    \end{equation*}
    \eqref{est: rough upper bdd} follows immediately by the minimality of $u$.
\end{proof}

We use the following compactness result from \cite{ss2024}, originally stated for local minimizers in $\BR^2$, but readily adapted to the half-space setting with only minor modifications to the proof. The argument follows the standard $\Gamma$-convergence argument of \cite{Baldo,gazoulis}, which we omit here.

\begin{proposition}[Proposition 3.1 in \cite{ss2024}]\label{prop: compactness of blow-down map}
    Let $u:\BR_+^2 \to \BR^2$ be a minimizing solution of \eqref{eq:2D allen cahn} satisfying \eqref{bdy cond}--\eqref{uniform bound}. For any sequence $\{r_j\}\to\infty$, there exists a subsequence $\{r_{j_k}\}$ and a function $\tilde{u}\in \mathrm{BV}_{loc}(\BR_+^2; \{a_1,..,a_N\})$ such that 
    \begin{equation*}
        u_{r_{j_k}} \to \tilde{u},\ \text{ in } L_{loc}^1(\BR_+^2;\BR^2). 
    \end{equation*}
    Here $u_{r_{j_k}}(z):=u(r_{j_k}z)$ and $\tilde{u}$ is a minimizing partition map  that locally minimizes $J_*(\cdot,\Omega)$ in the sense of Definition \ref{def: min par map}. Moreover, for any compact set $K\subset \overline{\BR_+^2}$, it holds that 
    \begin{equation*}
        \lim\limits_{k\to\infty} J(u_{r_{j_k}},K)= J_*(\tilde{u},K). 
    \end{equation*}
\end{proposition}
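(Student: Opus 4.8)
The plan is to run the by-now-standard $\Gamma$-convergence argument for the vector Allen--Cahn functional (following \cite{Baldo,gazoulis}, as in \cite{ss2024}), combined with the general principle that local minimizers of a $\Gamma$-convergent family converge to a local minimizer of the limit functional, carried out on $\BR^2_+$ with the extra bookkeeping forced by the flat boundary $\pa\BR^2_+$. Throughout I would work with the rescaled functional $J_\rho(v,\Om):=\int_\Om\big(\tfrac1{2\rho}|\na v|^2+\rho W(v)\big)\,dz$, for which $v_\rho(z):=v(\rho z)$ is automatically a local minimizer, noting the identity $J_\rho(u_\rho,\Om)=\rho^{-1}J(u,\rho\Om)$; the energy in the last line of the proposition is to be read with $J$ understood as this $J_{r_{j_k}}$.

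\textbf{Compactness.} First I would rescale Lemma~\ref{lem:rough ene upper bdd} to get the uniform bound $J_{r_j}(u_{r_j},B_R\cap\BR^2_+)\le CR$. For $L^1_{loc}$ precompactness of $\{u_{r_j}\}$ I would use the $N$ auxiliary functions $\phi_i:=d(a_i,\cdot)$: since $|\na_p\phi_i|\le\sqrt{2W(p)}$, Young's inequality gives $|\na(\phi_i\circ u_{r_j})|\le\tfrac1{2r_j}|\na u_{r_j}|^2+r_jW(u_{r_j})$, so each $\phi_i\circ u_{r_j}$ is bounded in $\mathrm{BV}(K)$ for $K\Subset\overline{\BR^2_+}$ (using \eqref{uniform bound} for the $L^1$ part). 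Along a subsequence $\phi_i\circ u_{r_{j_k}}\to g_i$ in $L^1_{loc}$ and a.e.; since $\int r_{j_k}W(u_{r_{j_k}})$ is bounded, $W(u_{r_{j_k}})\to0$ in $L^1$, hence (passing again) $\dist(u_{r_{j_k}},\{a_1,\dots,a_N\})\to0$ a.e. Because the map $a_j\mapsto(\phi_1(a_j),\dots,\phi_N(a_j))$ on the (distinct) wells is injective, the a.e.\ convergence of all $\phi_i\circ u_{r_{j_k}}$ then pins down $u_{r_{j_k}}\to\tilde u$ a.e.\ with $\tilde u\in\{a_1,\dots,a_N\}$; the uniform $L^\infty$ bound upgrades this to $L^1_{loc}$, and $\phi_i\circ\tilde u=g_i\in\mathrm{BV}_{loc}$ gives $\tilde u\in\mathrm{BV}_{loc}$. (Equivalently one may run the density-estimate argument of \cite{ss2024} via Lemma~\ref{lem:cafarelli-cordoba}.)

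\textbf{Identifying $\tilde u$ as a minimizing partition map.} This is a $\liminf$/recovery-sequence pair. For the lower bound $\liminf_k J_{r_{j_k}}(u_{r_{j_k}},\Om)\ge J_*(\tilde u,\Om)$, away from $\pa\BR^2_+$ I would use the usual coarea/slicing lower bound, with the weights $\sigma_{ij}$ produced by Lemma~\ref{lemma: 1D energy estimate}, giving $\sum_{i<j}\sigma_{ij}\ch(\pa S_i\cap\pa S_j\cap\Om)$. Near $\pa\BR^2_+\cap\{x>0\}$ I would use that $u_0(r_{j_k}x)\to a_1$ for a.e.\ fixed $x>0$ (from \eqref{u_0 conv to a1 a2}) and slice in the $y$-direction over thin vertical strips; the $d$-form of Lemma~\ref{lemma: 1D energy estimate} then yields $\ge\int_{\{x>0\}}d(\mathrm{tr}\,\tilde u,a_1)\,d\ch$ in the limit, and symmetrically $d(\mathrm{tr}\,\tilde u,a_2)$ over $\{x<0\}$, the origin carrying no $\ch$-mass and contributing nothing. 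For the upper bound I would build the classical recovery sequence $\tilde w_k\to w$ for a competitor $w$ with $w=\tilde u$ outside a bounded open $\Om$: interior interfaces of $w$ resolved by $U_{ij}$ rescaled by $1/r_{j_k}$ in the normal direction (giving $\sigma_{ij}$ per unit length), and a near-optimal one-dimensional $d$-geodesic profile in a thin collar along $\pa\BR^2_+$ realizing the boundary term. The fundamental (``joining'') estimate of $\Gamma$-convergence --- cutoff gluing plus averaging over a family of collars, using $\|u_{r_{j_k}}-\tilde u\|_{L^2(K)}^2\le C\|u_{r_{j_k}}-\tilde u\|_{L^1(K)}\to0$ --- lets me replace $\tilde w_k$ by $w_k$ that agrees with $u_{r_{j_k}}$ near $\pa\Om'$ for a slightly larger $\Om\Subset\Om'$. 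Then minimality $J_{r_{j_k}}(u_{r_{j_k}},\Om')\le J_{r_{j_k}}(w_k,\Om')$, together with additivity on $\Om'=\Om\cup(\Om'\setminus\overline\Om)$ and the lower bound of the previous step applied on the collar $\Om'\setminus\overline\Om$, yields $\limsup_k J_{r_{j_k}}(u_{r_{j_k}},\Om)\le J_*(w,\Om)$. Chaining with the lower bound gives $J_*(\tilde u,\Om)\le J_*(w,\Om)$, i.e.\ $\tilde u$ is a minimizing partition map; taking $w=\tilde u$ gives $\lim_k J_{r_{j_k}}(u_{r_{j_k}},\Om)=J_*(\tilde u,\Om)$, and the claim for a compact $K\subset\overline{\BR^2_+}$ follows by sandwiching $K$ between open sets whose boundaries carry no $J_*(\tilde u,\cdot)$-mass.

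\textbf{Where the difficulty lies.} The interior parts of all three steps are essentially verbatim from \cite{ss2024,Baldo,gazoulis}, so I expect the only genuine work to be at $\pa\BR^2_+$: (i) promoting the mere a.e.\ convergence of the oscillating datum $u_0(r_{j_k}x)$ to the clean limit ``$a_1$ on $\{x>0\}$, $a_2$ on $\{x<0\}$'', while controlling the degeneracy at the origin where $u_0(r_{j_k}\cdot)$ need not converge; and (ii) checking that the minimal cost of a transition layer glued to the flat boundary is exactly the degenerate geodesic distance $d(\cdot,a_1)$ (resp.\ $d(\cdot,a_2)$) appearing in $J_*$, both in the lower bound and in the construction of the recovery profile. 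This is exactly where \eqref{u0 finite energy}, \eqref{u_0 conv to a1 a2} and the $d$-version of Lemma~\ref{lemma: 1D energy estimate} are used; everything else is routine.
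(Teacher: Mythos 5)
Your proposal is correct and follows exactly the route the paper takes: the paper omits the proof and refers to the standard $\Gamma$-convergence argument of \cite{Baldo,gazoulis} as implemented in \cite{ss2024}, which is precisely the compactness-via-$\phi_i=d(a_i,\cdot)$, $\liminf$/recovery, and De Giorgi joining scheme you describe, with the boundary terms handled through the degenerate distance $d(\cdot,a_1)$, $d(\cdot,a_2)$ as you indicate. No substantive difference from the paper's (cited) argument.
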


\section{Classification of blow-down limits}\label{sec:classification}

In this section, we classify the blow-down limits of local minimizers of $J$. Let $u$ denote a minimizing solution of \eqref{eq:2D allen cahn} satisfying \eqref{bdy cond}--\eqref{uniform bound}. Proposition \ref{prop: compactness of blow-down map} establishes the existence of a blow-down limit along any sequence ${r_j}\to\infty$. Closely following the argument of \cite[Section 3]{ss2024}, we first show that every blow-down limit is homogeneous, i.e., invariant in the radial direction. While \cite{ss2024} treats only the case of a triple-well potential $W$, our argument also applies to multiple wells with equal $\sigma_{ij}$, requiring no substantial modification of the proof. In addition, we also need adjustments to account for the half-space setting. We start with the following estimate derived from a Pohozaev's identity.

\begin{lemma}\label{lem:pohozaev}
    There exists a constant $C=C(u_0,W,M)$ such that
    \begin{equation}\label{ineq: almost mon}
        \f{d}{dr}\left(\f{1}{r}\int_{B_r^+} W(u)\,dz \right)\geq \f{1}{2r}\int_{\pa B_r^+\cap \BR_+^2} \left(\f12|\pa_r u|^2-\f12|\pa_T u|^2+W(u)\right)\,d\mathcal{H}^1- \f{C}{r^2},\quad \forall r>0.
    \end{equation}    
\end{lemma}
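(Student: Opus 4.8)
The plan is to derive \eqref{ineq: almost mon} from the Pohozaev (stress-energy) identity for the Allen–Cahn system on the half-disk $B_r^+$. Recall that for a solution of $\Delta u=\nabla_u W(u)$, the stress-energy tensor $T_{ij}=\pa_i u\cdot\pa_j u-\delta_{ij}\left(\tfrac12|\na u|^2+W(u)\right)$ is divergence-free, so integrating $\di(T\cdot z)$ over $B_r^+$ and applying the divergence theorem yields an identity relating a bulk integral of $W(u)$ to boundary integrals over $\pa B_r^+$. The boundary $\pa B_r^+$ splits into the circular arc $\pa B_r^+\cap\BR_+^2$ and the flat segment $I_r:=\{(x,0):|x|\le r\}$. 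On the arc, the outward normal is radial, and the usual computation produces $\tfrac{r}{2}\int_{\pa B_r^+\cap\BR_+^2}\left(|\pa_r u|^2-|\pa_T u|^2+2W(u)\right)d\mathcal H^1$ together with the term $-\int_{B_r^+}2W(u)\,dz$ from the $\delta_{ij}$ part; rearranging gives exactly the claimed derivative of $\tfrac1r\int_{B_r^+}W(u)$ on the left, since $\tfrac{d}{dr}\left(\tfrac1r\int_{B_r^+}W\right)=\tfrac1r\int_{\pa B_r^+\cap\BR_+^2}W\,d\mathcal H^1-\tfrac1{r^2}\int_{B_r^+}W$, with the $\pa_r u,\pa_T u$ cross terms supplying the remaining circular-arc contribution after dividing by $2r$.

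The key point where the half-space setting enters is the contribution from the flat portion $I_r$ of the boundary. First I would compute $T\cdot z$ restricted to $I_r$: here $z=(x,0)$, the outward normal is $(0,-1)$, and $u$ restricted to $y=0$ equals $u_0(x)$, so the relevant boundary integrand becomes something like $-x\,\pa_y u\cdot u_0'(x)+x\,(\text{something})$, and more importantly a term involving $x\,\pa_x u_0\cdot\pa_x u_0$ and $x\,W(u_0)$. The strategy is to show that the \emph{entire} contribution of $I_r$ to the Pohozaev identity is bounded in absolute value by $C$ (a constant independent of $r$), which upon division by $2r$ produces the error term $-C/r^2$. This is where the technical assumptions \eqref{u0 finite energy} and especially \eqref{u0' control} are used: $\int|x||u_0'|\,dx<\infty$ controls the terms linear in $x$ against $u_0'$, and the finite-energy bound \eqref{u0 finite energy} together with uniform boundedness \eqref{uniform bound} controls $\int x\,W(u_0)\,dx$ and the mixed terms $\int x\,\pa_y u\cdot u_0'\,dx$ via $|\pa_y u|\le M$. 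One must be slightly careful that some contributions are genuinely signed (the ones that would enter with the "right" sign for the monotonicity can simply be discarded, weakening the equality to the stated inequality), while the rest are absorbed into $C/r^2$.

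In executing this I would proceed as follows. Step 1: write down the Pohozaev identity on $B_r^+$ in the form $\int_{B_r^+}2W(u)\,dz=\int_{\pa B_r^+}\big[(z\cdot\nu)\left(\tfrac12|\na u|^2+W(u)\right)-(z\cdot\na u)\cdot(\nu\cdot\na u)\big]\,d\mathcal H^1$ (valid since $u$ is smooth by elliptic regularity and the integrand is integrable by \eqref{uniform bound}). Step 2: on the arc, substitute $\nu=z/r$, decompose $\na u$ into radial and tangential parts, and identify the resulting expression as $\tfrac{r}{2}\int_{\pa B_r^+\cap\BR_+^2}\big(|\pa_r u|^2-|\pa_T u|^2+2W(u)\big)\,d\mathcal H^1$. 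Step 3: on $I_r$, where $z\cdot\nu=0$, the integrand reduces to $-(z\cdot\na u)\cdot(\nu\cdot\na u)=x\,\pa_x u\cdot\pa_y u=x\,u_0'(x)\cdot\pa_y u(x,0)$; bound its absolute value over $I_r$ by $M\int_{\BR}|x||u_0'|\,dx\le C$ using \eqref{u0' control} and \eqref{uniform bound}. Step 4: combine Steps 1–3, divide by $2r$, and recognize the left side plus the bulk $W$ term as $\tfrac{d}{dr}\left(\tfrac1r\int_{B_r^+}W(u)\,dz\right)$ using the coarea formula; discard any remaining terms with a favorable sign to pass from equality to the inequality \eqref{ineq: almost mon}. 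The main obstacle I anticipate is Step 3 — carefully accounting for \emph{all} the boundary terms on the flat segment (in particular making sure no uncontrolled term involving $x\,|\pa_y u|^2$ or $x\,W(u)$ survives without being either of good sign or bounded by the hypotheses on $u_0$), and verifying that the curvature/parametrization of $\pa B_r^+\cap\BR_+^2$ — which is an arc, not a full circle — does not introduce endpoint corrections that fail to vanish; these endpoint corrections occur precisely at $(\pm r,0)\in I_r$ and should be handled by the same $O(1)$ estimates.
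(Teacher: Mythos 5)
Your proposal is correct and follows essentially the same route as the paper: the stress-energy tensor / Pohozaev identity on $B_r^+$, the observation that on the flat segment $z\cdot\nu=0$ so only the mixed term $x\,\partial_x u\cdot\partial_y u = x\,u_0'(x)\cdot\partial_y u(x,0)$ survives, and its absorption into the $C/r^2$ error via \eqref{u0' control} and \eqref{uniform bound}. The concern you raise in Step 3 about uncontrolled terms like $x\,W(u)$ or $x\,|\partial_y u|^2$ on $I_r$ is exactly resolved by the vanishing of $z\cdot\nu$ there, which is also how the paper handles it.
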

\begin{proof}
    We write $z=(z_1,z_2)$ and define the stress-energy tensor
    \begin{equation*}
        T_{ij}:= \pa_{i} u\pa_{j}u -\delta_{ij}\left(\f12|\na u|^2+W(u)\right).
    \end{equation*}
    Then $\mathrm{div}(T_i)=\pa_{j}T_{ij}= \pa_{i} u\cdot(\Delta u-\nabla_u W(u))=0$ by \eqref{eq:2D allen cahn}. Therefore,
    \begin{equation}\label{poh id}
        \int_{B_r^+} \pa_{j}(z_iT_{ij})\,dz=\int_{B_r^+} \delta_{ij} T_{ij}\,dz=-2\int_{B_r^+} W(u)\,dz.
    \end{equation}
    Using the divergence theorem,
    \begin{equation}\label{div thm}
    \begin{split}
        \int_{B_r^+} \pa_j(z_iT_{ij})\,dz&=\int_{\pa B_r^+} (z\cdot T)\cdot \nu\,d\mathcal{H}^1 \\
        &=\int_{\pa B_r\cap \BR_+^2} r\left( |\pa_r u|^2-\f12|\na u|^2-W(u)\right)\,d\ch-\int_{-r}^r  z_1\pa_1 u \pa_2 u \,dz_1. 
        \end{split}
    \end{equation}
    Using \eqref{poh id} and \eqref{div thm} we derive that 
    \begin{align*}
        \f{d}{dr}\left( \f{1}{r}\int_{B_r^+} W(u)\,dz  \right)&=\f1r \int_{\pa B_r\cap \BR_+^2} W(u)\,d\mathcal{H}^1 -\f{1}{r^2} \int_{B_r^+} W(u)\,dz \\
        & =\f{1}{2r} \int_{\pa B_r\cap\BR_+^2} \left(\f12|\pa_r u|^2-\f12|\pa_T u|^2+W(u)\right)\,d\ch-\f{1}{2r^2}\int_{-r}^r z_1\pa_1 u \pa_2 u\,dz_1\\
        &\geq \f{1}{2r} \int_{\pa B_r\cap\BR_+^2} \left(\f12|\pa_r u|^2-\f12|\pa_T u|^2+W(u)\right)\,d\ch-\f{1}{2r^2}\int_{-r}^r \left|z_1\right|\cdot|\pa_1 u|\cdot|\pa_2 u|\,dz_1. 
    \end{align*}
    By \eqref{uniform bound} and \eqref{u0' control}, we have  
    \begin{equation*}
         |\pa_2 u|\leq M,\quad \int_{-r}^{r} |z_1||\pa_1 u|\,dz_1= \int_{-r}^r |x||u_0'|\,dx\leq \infty.       
    \end{equation*}
    Then \eqref{ineq: almost mon} follows immediately. 
\end{proof}

Next we prove the following almost equipartition property of energy.

\begin{proposition}\label{prop: ene equipartition}
    There exists $C=C(u_0,W,M)$, $R_0>0$ and $\al\in (0,1)$, such that
    \begin{equation}\label{ineq: ene equiparition}
        \int_{B_r^+} \left(\sqrt{W(u)}-\f{1}{\sqrt{2}}|\na u|\right)^2\,dz\leq Cr^{1-\al}, \quad \forall r\geq R_0.
    \end{equation}
\end{proposition}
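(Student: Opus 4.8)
The plan is to expand the square and reduce to bounds for two nonnegative error terms. Put $E(r):=J(u,B_r^+)$; expanding the integrand pointwise,
\[
\Big(\sqrt{W(u)}-\tfrac1{\sqrt2}|\na u|\Big)^{2}=\tfrac12|\na u|^{2}+W(u)-\sqrt2\,|\na u|\sqrt{W(u)},
\]
so $D(r):=\int_{B_r^+}\big(\sqrt{W(u)}-\tfrac1{\sqrt2}|\na u|\big)^2\,dz=E(r)-\sqrt2\int_{B_r^+}|\na u|\sqrt{W(u)}\,dz\ge0$. Passing to polar coordinates and using $|\na u|\ge|\pa_T u|$ on each half-circle, one has $\sqrt2\int_{B_r^+}|\na u|\sqrt{W(u)}\,dz\ge\int_0^r\mathcal L(\rho)\,d\rho$, where $\mathcal L(\rho):=\sqrt2\int_{\pa B_\rho^+\cap\BR_+^2}|\pa_T u|\sqrt{W(u)}\,d\ch$ is the $d$-length of the arc $\theta\mapsto u(\rho\cos\theta,\rho\sin\theta)$, while $E(r)=\int_0^r\big(\tfrac12\int_{\pa B_\rho^+\cap\BR_+^2}|\pa_r u|^2\,d\ch+\mathcal E(\rho)\big)\,d\rho$ with $\mathcal E(\rho):=\int_{\pa B_\rho^+\cap\BR_+^2}\big(\tfrac12|\pa_T u|^2+W(u)\big)\,d\ch$ the energy of that arc. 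Since $\mathcal E(\rho)\ge\mathcal L(\rho)$ always, this gives
\[
D(r)\ \le\ \tfrac12\int_{B_r^+}|\pa_r u|^2\,dz\ +\ \int_0^r\big(\mathcal E(\rho)-\mathcal L(\rho)\big)\,d\rho ,
\]
and the claim reduces to bounding each summand by $Cr^{1-\al}$: the first is an almost radial invariance estimate for $u$, the second an almost equipartition estimate on circles.

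For the second summand, $\mathcal E(\rho)-\mathcal L(\rho)=\int_{\pa B_\rho^+\cap\BR_+^2}\big(\tfrac1{\sqrt2}|\pa_T u|-\sqrt{W(u)}\big)^2\,d\ch$, and $\mathcal L(\rho)\ge d\big(u_0(\rho),u_0(-\rho)\big)$ by the definition \eqref{def:metric d} of $d$. The quadratic vanishing of $W$ at the wells ($\mathrm{H}_1$) gives $d(a_i,p)\le C|p-a_i|^2$ near $a_i$, so by the triangle inequality for $d$, $\mathcal L(\rho)\ge\sigma_{12}-C\big(|u_0(\rho)-a_1|^2+|u_0(-\rho)-a_2|^2\big)$, with an error integrable in $\rho$ by \eqref{u_0 conv to a1 a2} — this is where the flat boundary $\pa\BR_+^2$ is handled, the extra care alluded to in the introduction. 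Combined with $E(r)\le Cr$ (Lemma \ref{lem:rough ene upper bdd}) this already controls $\int_0^r(\mathcal E-\mathcal L)$ when the blow-down limit of $u$ is two-phase; when it is a triple junction the lower bound must be sharpened, using Proposition \ref{prop: compactness of blow-down map} and the density estimate Lemma \ref{lem:cafarelli-cordoba} to see that on the half-circles of radius $\sim r$ the arc of $u$ comes $o(1)$-close to the intermediate well $a_i$, so that $\mathcal L(\rho)\ge\sigma_{1i}+\sigma_{i2}-o(1)$.

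The radial-energy bound, together with the matching rate needed above, is where Lemma \ref{lem:pohozaev} enters and where I expect the real difficulty to lie. Writing $h(r):=\tfrac1r\int_{B_r^+}W(u)\,dz$ (bounded since $E(r)\le Cr$), Lemma \ref{lem:pohozaev} rearranges to $\int_{\pa B_\rho^+\cap\BR_+^2}|\pa_r u|^2\,d\ch\le 2\rho\,h'(\rho)+C/\rho+\big|\int_{\pa B_\rho^+\cap\BR_+^2}\xi(u)\,d\ch\big|$, where $\xi(u)=\tfrac12|\na u|^2-W(u)=\big(\tfrac1{\sqrt2}|\na u|-\sqrt{W(u)}\big)\big(\tfrac1{\sqrt2}|\na u|+\sqrt{W(u)}\big)$; Cauchy--Schwarz together with $\big(\tfrac1{\sqrt2}|\na u|+\sqrt{W(u)}\big)^2\le2\big(\tfrac12|\na u|^2+W(u)\big)$ bounds the last term by $\sqrt{2\,D'(\rho)\,E'(\rho)}$. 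Integrating over a dyadic annulus $A^+_{s,2s}$, using the boundedness of $h$, the bound $E(2s)\le Cs$, and Young's inequality to absorb the Cauchy--Schwarz term, leads to $D(2s)-D(s)\le Cs+C\int_s^{2s}(\mathcal E-\mathcal L)\,d\rho$. The crux is to turn this, via the \emph{minimality} of $u$ — comparing $u$ on annuli with suitable competitors in a Caccioppoli/hole-filling fashion, in the spirit of \cite[Section~3]{ss2024} — into a genuine decay estimate of the form $D(2s)-D(s)\le Cs^{1-\al_0}+\theta\,D(s)$ with $\theta<1$, whence $D(r)\le Cr^{1-\al}$ follows by iteration over $A^+_{2^k,2^{k+1}}$. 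The underlying obstruction is the absence of a Modica inequality in the vectorial setting: the discrepancy $\xi(u)$ has no sign, the radial and tangential energies do not separate for free, and the algebraic rate has to be squeezed out of the combined use of minimality, the $\Gamma$-limit, and the almost-monotonicity formula.
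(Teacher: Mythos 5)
Your algebraic starting point is the same as the paper's: $\int_{B_r^+}(\sqrt{W(u)}-\tfrac1{\sqrt2}|\na u|)^2 = J(u,B_r^+)-\int_{B_r^+}\sqrt{2W(u)}\,|\na u|$, and the task is an upper bound for the first term and a lower bound for the second. But the way you propose to control the two pieces has genuine gaps. The most serious one is the claim that the rough bound $J(u,B_r^+)\le Cr$ of Lemma \ref{lem:rough ene upper bdd} ``already controls'' $\int_0^r(\mathcal E-\mathcal L)\,d\rho$ in the two-phase case. It does not: since $\int_0^r\mathcal E(\rho)\,d\rho\le J(u,B_r^+)$ and $\int_0^r\mathcal L(\rho)\,d\rho\ge \sigma_{12}r-C$, closing the argument requires the \emph{sharp} upper bound $J(u,B_r^+)\le \sigma_{12}r+Cr^{1-\al}$ with the correct leading constant, not $Cr$ with an unspecified $C(W,M)$. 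Producing that sharp constant is precisely the bulk of the paper's proof: one identifies from $u|_{\pa B_R\cap\BR_+^2}$ a finite arc partition, solves the associated minimizing partition problem with energy $\mathcal J_R$, and builds an explicit competitor out of heteroclinic profiles in thin rectangles (plus boundary layers matching $u_0$) to get $J(u,B_R^+)\le\mathcal J_R+CR^{1-\al}$. Your triple-junction case has the same defect compounded: the bound $\mathcal L(\rho)\ge\sigma_{1i}+\sigma_{i2}-o(1)$ obtained from compactness carries no algebraic rate, and identifying which wells appear at which scales presupposes structural information about the blow-down limit that, in the paper's logic, is only derived \emph{after} this proposition. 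The paper sidesteps both problems by comparing \emph{both} sides with the same quantity $\mathcal J_R$ determined by the actual trace of $u$ on $\pa B_R$, proving the lower bound $\int_{B_R^+}\sqrt{2W(u)}|\na u|\ge\mathcal J_R-CR^{1-\al}$ via the coarea formula on level sets of $d(\cdot,a_i)$ and a constrained minimal partition problem with wetting regions.

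The second gap is the radial term $\tfrac12\int_{B_r^+}|\pa_r u|^2$. In the paper this smallness (Lemma \ref{lem:radial der small}) is a \emph{consequence} of the equipartition estimate combined with the almost-monotonicity formula, not an ingredient of its proof; Lemma \ref{lem:pohozaev} alone cannot isolate $\int|\pa_r u|^2$ because the discrepancy $\tfrac12|\na u|^2-W(u)$ has no sign. Your proposed fix — a hole-filling iteration $D(2s)-D(s)\le Cs^{1-\al_0}+\theta D(s)$ with $\theta<1$ extracted from minimality — is stated as a hope rather than an argument: you give no competitor construction or Caccioppoli-type inequality that would produce the contraction factor $\theta$, and no such mechanism appears in \cite[Section 3]{ss2024} either. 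As written, the proposal reduces the proposition to two estimates, neither of which is established by the tools you invoke, and one of which is circular relative to the paper's development.
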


\begin{proof}
    The proof follows closely that of \cite[Proposition 3.3]{ss2024}. We therefore sketch the main steps here, omitting minor details and highlighting the adjustment needed to accommodate the half-space boundary condition.

    For any $r$, using the rough energy upper bound \eqref{est: rough upper bdd} with $2r$ and Mean Value Theorem, there is $R\in(r,2r)$ such that
    \begin{equation}\label{well behaved bdy data}
          \int_{\pa B_R\cap \BR_+^2} \left( \f12|\na_T u|^2+ W(u) \right)\,d\ch\leq C(W,M).
    \end{equation}
    The proof of \eqref{ineq: ene equiparition} is divided into the proof of the following two inequalities:
    \begin{equation}\label{ene upper bd}
       \int_{B_R^+} \left( \f12|\na u|^2+W(u) \right) \,dz\leq \mathcal{J}_R  +CR^{1-\alpha}. 
    \end{equation}

    \begin{equation}\label{ene lower bd}
        \int_{B_R^+} \sqrt{2W(u)} |\na u|\,dz\geq  \mathcal{J}_R -CR^{1-\alpha}.  
    \end{equation}
    Here $\mathcal{J}_R$ is the energy of a minimizing partition map in the sence of Definition \ref{def: min par map} determined by $u|_{\pa B_R^+}$, which will be defined later. It is obvious that \eqref{ineq: ene equiparition} follows directly from \eqref{ene upper bd} and \eqref{ene lower bd} since $r$ is arbitrary and $R\in(r,2r)$.

    First we show that \eqref{well behaved bdy data} implies $u|_{\pa B_R^+}$ is ``well-behaved" in the sense that it can be partitioned into finitely many arcs, each of which is associated with an $a_i$ phase.

    Let $\delta=\delta(W)<<1$ be a small parameter. Define the set
    \begin{equation*}
        A_R:=\{z\in \pa B_R\cap \BR_+^2: |u(z)-a_i|> \f{\delta}{2},\, \forall i\}.
    \end{equation*}
    This set is a finite union of open arcs. Suppose that, for some arc, there exists a point $z$ such that $|u(z)-a_i|\geq\delta$ for all $1\leq i\leq N$. Since $u$ must be within $\delta/2$ neighborhood of some phase $a_i$ at the endpoints of this arc, the energy along the arc is bounded below by a constant $C(W,\delta)$. Let $T_R$ denote the union of all such arcs. Then the total number of disjoint arcs in $T_R$ is bounded by a constant $C$ depending only on $W$, $\delta$ and $M$, and the measure of $T_R$ is uniformly bounded, i.e., $\ch(T_R)\leq C$. Consequently, the complement $(\partial B_R\cap\BR_+^2)\setminus T_R$ consists of finitely many disjoint closed arcs $I_1,I_2,\dots,I_k$, forming an almost-partition of $\partial B_R\cap\BR_+^2$. Furthermore, we may assume $\ch(I_j)\geq C$ for all $j$ and some constant $C(W,M)$; otherwise, we can absorb any arc $I_k$ with small measure into $T_R$ without affecting the subsequent arguments.

    For each arc $I_j$, there exists an associated phase $a_{i_j}$ such that 
    \begin{equation*}
        |u(z)-a_{i_j}|\leq \delta, \ \ \forall z\in I_j.
    \end{equation*}

    We then expand each $I_j$ to a slightly larger arc $\tilde{I}_j$ such that $\{\tilde{I}_j\}_{i=1}^k$ forms a partition of $\pa B_R\cap \BR_+^2$. Denote by $v_R\in \mathrm{BV}(B_R^+;\{a_1,\dots,a_N\})$ a minimizer of the following minimizing partition problem:
    \begin{equation}\tag{Problem 1}
    \begin{split}
         &\qquad\min J_*(v,B_R^+),\\
         & v_R=\sum\limits_{j=1}^k \chi_{\tilde{I}_j} a_{i_j}\ \ \text{on }\pa B_R\cap \BR_+^2 \text{ in the sense of trace}.
    \end{split}
    \end{equation}

    Denote by 
    \begin{equation*}
        \mathcal{J}_R:=J_*(v_R,B_R^+).
    \end{equation*}

    The existence and the structure of $v_R$ is described later in Lemma \ref{lem: structure of min par map}. Then we can construct an energy competitor $v$ based on the profile of $v_R$ to obtain the energy upper bound
    \eqref{ene upper bd}. To be more specific, take $R_1:=R-R^{\al}$ for some $\al\in (0,1)$. Define $v|_{\pa B_{R_1}}$ as
    \begin{equation*}
    \begin{split}
        &v(z)=a_{i_j},\ \ \text{ for }z\in \f{R_1}{R} \tilde{I}_j,\, \dist(z,\pa(\f{R_1}{R}\tilde{I}_j))\geq 1,\quad 1\leq j\leq k\\
        & v \text{ changes from }a_{i_j}\text{ to }a_{i_{j+1}}\text{ smoothly in an }O(1) \text{ arc connecting }\f{R_1}{R}\tilde{I}_j,\,\f{R_1}{R}\tilde{I}_{j+1},\ 1\leq j\leq k-1,\\
        &\text{Near }(R_1,0)/(-R_1,0),\, v \text{ transits from }u_0(R_1)/u_0(-R_1) \text{ to }a_{i_1}/a_{i_k}  \text{ smoothly in an }O(1) \text{ arc }.
    \end{split}
    \end{equation*}
On the half annulus $A_{R_1,R}^+$, let $v$ linearly interpolate between $u|_{\pa B_R\cap \BR_+^2}$ and $v|_{\pa B_{R_1}\cap \BR_+^2}$. Special care is required near the $x$-axis to maintain the boundary condition $v=u_0$ on $\{(x,0):R_1<|x|<R\}$. This can be achieved by defining $v$ separately in the two corner regions $A_{R_1,R}^+\cap\{(x,y):|y|\leq C\}$ (analogous to $\Omega_4$ and $\Omega_5$ in the proof of Lemma \ref{lem:rough ene upper bdd}) and ensuring that the energy contribution from these regions is negligible. A direct computation then gives the energy bound $J(v,A_{R_1,R}^+)\leq CR^{1-\alpha}$. We omit the detailed calculation and refer the reader to \cite[(3.33)--(3.37)]{ss2024}.

For the construction of $v$ inside $B_{R_1}^+$, define the following rescaled version of the minimizing partition map $v_R$:
\begin{equation*}
   v_{R_1}(z):= v_R(\f{R}{R_1}z),\ \ z\in B_{R_1}^+.
\end{equation*}

Let $S^{1}_j$ denote the set $\{v_{R_1}^{-1}(a_i)\}$. From Lemma \ref{lem: structure of min par map}, the total number of connected components of $S_i^1$ is bounded by $k+2$ and the total number of line segments $l\subset (\pa S_i^1\cap\pa S_j^1)$ is at most ${k+2}\choose{2}$.  Near a line segment $l\subset(\pa S_i^{1}\cap\pa S_j^{1})$, we set $v(z)=U_{ij}(d(z,l))$ within a thin rectangle of width $R^\alpha$ whose long side is parallel to $l$. Here $d(\cdot,l)$ denote the signed distance function to $l$. The energy within these thin rectangles is approximately $\sigma_{ij}\mathcal{H}^1(l)$. Then we glue these thin rectangles either by regions of size $O(R^\al)$ in group of three (corresponding to interior triple junction points) or connected them to the boundary $\pa B_R^+$ through a boundary layer (corresponding to boundary jump points). One can assign the value of $v$ carefully so that the energy within these gluing regions is controlled by $R^{1-\al}$. See \cite[proof of Proposition 3.3]{ss2024} for a similar construction. If there is mismatch on the $x$-axis, say, there exists some $i\neq 1$ such that $v_{R_1}=a_i$ on $\{(x,0): 0<x<R_1\}$ in the sense of trace, then we construct $v$ in the boundary layer $L:=\{(x,y): R^{\al}<x<R-2R^\al,\, 0<y<4R^\al\}$ as follows:
\begin{equation*}
    v(x,y)=\begin{cases}
        U_{1i}(y-2R^\al), & R^\al\leq y\leq 3R^\al,\\
        U_{1i}(-R^\al)\f{y}{R^\al}+u_0(x)\f{R^\al-y}{R^\al}, & 0<y< R^\al,\\
        U_{1i}(R^\al)\f{4R^\al-y}{R^\al}+ a_i\f{y-3R^\al}{R^\al}, & 3R^\al<y<4R^\al. 
    \end{cases}
\end{equation*}
The energy $J(v, L\cap \{R^\al<y<3R^\al\})$ is bounded by $R\sigma_{1i}-O(R^\al)$, while the contributions from the other two layers are controlled by $O(R^\al)$. For instance,
\begin{equation*}
    J(v,L\cap\{0<y<R^\al\})\leq \int_{R^\al}^{R-2R^\al}\left(\int_0^{R^\al} C\f{|u_0(x)-a_1|^2}{R^{2\al}}+C|u_0(x)-a_1|^2\,dy\right)\,dx\leq CR^\al, 
\end{equation*}
where we have used the fact that $|U_{1i}(-R^\al)-a_1|\leq Ce^{-CR^\al}$, $W(u_0(x))\leq C|u_0-a_1|^2$ for $x>R^\al$, and \eqref{u_0 conv to a1 a2}.  

We have now covered all line segments in $\cup_{i,j}\partial S^1_i\cap\partial S^1_j$, as well as any boundary mismatch along the $x$-axis, by thin rectangles of size $O(R^\al)$. The triple junctions and boundary jump points are also covered by small connecting regions, which glue all the thin rectangles associated with flat interfaces. In the remaining “interior region” of each $S_i^1$, we simply set $v=a_i$. Summing the contributions, the dominant part of $J(v,B_R^+)$ arises from those rectangular neighborhoods of $\partial S_i^1\cap\partial S_j^1$ and from the boundary mismatch layer $L$ near the $x$-axis, yielding approximately $\frac{R_1}{R}\mathcal{J}_R\sim\mathcal{J}_R-O(R^{\alpha})$. The remaining energy is bounded by $C(W)R^{1-\alpha}$, where $\alpha$ can be chosen even smaller if necessary. The upper bound \ref{ene upper bd} is established.

Next we prove the lower bound \eqref{ene lower bd}. Set $R_2:= R+R^\al$. We define a function $w$ on $\pa B_{R_2}^+$ in the same way as the definition of $v|_{\pa B_{R_1}^+}$:
\begin{equation*}
    \begin{split}
        &w(x,0)=u_0(x)\ \ \text{ for } -R_2\leq x\leq R_2,\\ 
        &w(z)=a_{i_j},\ \ \text{ for }z\in \f{R_2}{R} \tilde{I}_j,\, \dist(z,\pa(\f{R_2}{R}\tilde{I}_j))\geq 1,\quad 1\leq j\leq k\\
        & w \text{ changes from }a_{i_j}\text{ to }a_{i_{j+1}}\text{ smoothly in an }O(1) \text{ arc connecting }\f{R_2}{R}\tilde{I}_j,\,\f{R_2}{R}\tilde{I}_{j+1},\ 1\leq j\leq k-1,\\
        &\text{Near }(R_2,0)/(-R_2,0),\, w \text{ transits from }u_0(R_2)/u_0(-R_2) \text{ to }a_{i_1}/a_{i_k}  \text{ smoothly in an }O(1) \text{ arc }.
    \end{split}
    \end{equation*}
Then let $w$ linearly interpolate between $w|_{\pa B_{R_2}\cap \BR_+^2}$ and $u|_{\pa B_R\cap \BR_+^2}$ in the half annulus $A_{R,R_2}^+$. Again for the two corner regions of size $R^\al$ near $x$-axis, we need to make some adjustment to match with $u_0$. Similarly one has 
\begin{equation*}
    J(w,A_{R,R_2}^+)\leq CR^{1-\al}.
\end{equation*}
Finally set $w=u$ on $B_R^+$. If we can prove 
\begin{equation}\label{est:lower bd R2}
    \int_{B_{R_2}^+} \sqrt{2}|\na w|\sqrt{W(w)}\,dz\geq \mathcal{J}_{R}-cR^{1-\al},
\end{equation}
then \eqref{ene lower bd} follows by 
\begin{equation*}
    \begin{split}
         \int_{B_{R}^+} \sqrt{2}|\na &u|\sqrt{W(u)}\,dz= \int_{B_{R_2}^+} \sqrt{2}|\na w|\sqrt{W(w)}\,dz- \int_{A_{R,R_2}^+} \sqrt{2}|\na w|\sqrt{W(w)}\,dz\\
         &\geq  \int_{B_{R_2}^+} \sqrt{2}|\na w|\sqrt{W(w)}\,dz- J(w,A_{R,R_2}^+)\geq \mathcal{J}_R-CR^{1-\al}.
        \end{split}
    \end{equation*}
The proof is reduced to the verification of \eqref{est:lower bd R2}. Below we present the proof for the case $N>3$ and $\sigma_{ij}\equiv \sigma$. The case $N=3$ follows by a similar argument.

For convenience, we rescale the domain $B_{R_2}^+$ to $B_R^+$ by defining
\begin{equation*}
    \tilde{w}(z):=w(\f{R_2}{R}z), \quad \forall z\in B_R^+.
\end{equation*}
The energy becomes 
\begin{equation*}
    \int_{B_R^+} \sqrt{2}|\na \tilde{w}|\sqrt{W(\tilde{w})}\,dz=\f{R}{R_2} \int_{B_{R_2}^+} \sqrt{2}|\na w|\sqrt{W(w)}\,dz.
\end{equation*}
Then \eqref{est:lower bd R2} becomes
\begin{equation}\label{ene lower bd tilde w}
    \int_{B_R^+} \sqrt{2}|\na \tilde{w}|\sqrt{W(w)}\,dz\geq (1-R^{\al-1}) (\mathcal{J}_R-CR^{1-\al}) \geq \mathcal{J}_R-CR^{1-\al},
\end{equation}
where the second inequality holds because $R^\al$ can be absorbded by $R^{1-\al}$ when $\al<\f12$.

Define the open subsets of $B_R^+$ by
\begin{equation*}
    S_i^2:=\{z\in B_R^+: d(\tilde{w}(z),a_i)<\f{\sigma}{2} \}\ \text{ for }1\leq i\leq N.
\end{equation*}
$S_i^2$ are all disjoint since for any $z$ and $i\neq j$, $d(\tilde{w}(z),a_i)+d(\tilde{w}(z),a_j)\geq \sigma$. We also recall the following property for the metric $d$, cf. \cite{sternberg1988effect},
\begin{equation*}
    |\na_q d(p,q)|=\sqrt{W(q)},\ \text{ for any }p,q\in\BR^2.
\end{equation*}
Using co-area formula we obtain
\begin{equation}\label{est: lower bdd 1}
        \begin{split}
            \int_{B_{R}^+}  \sqrt{2}|\na \tilde{w}|\sqrt{W(\tilde{w})}\,dz
           &\geq  \sum\limits_{i=1}^N \int_{S_i^2}  \sqrt{2} |\na \tilde{w}|\sqrt{W(\tilde{w})}\,dz\\
           &=  \sum\limits_{i=1}^N \int_{S_i^2}  |\na d( \tilde{w}(z), a_i)|\,dz\\
           &\geq  \sum\limits_{i=1}^N \int_{R^{-1/3}}^{\sigma/2} \mathcal{H}^1(\{z:d(\tilde{w}(z),a_i)=s\})\,ds\\
           &=\sum\limits_{i=1}^N (\f{\sigma}{2}-R^{-\f13}) \mathcal{H}^1(\{z:d(\tilde{w}(z),a_i)=s_i\}), 
        \end{split}
\end{equation}
where $s_i\in [R^{-\f13},\f{\sigma}{2}]$. For sufficiently large $R$, the inequality above implies 
    \begin{equation*}
        \f{\sigma}{4} \sum\limits_{i=1}^N \mathcal{H}^1(\{z:d(\tilde{w}(z),a_i)=s_i\})\leq \int_{B_{R}^+} \sqrt{2}|\na \tilde{w}|\sqrt{W(\tilde{w})}\,dz\leq CR.
    \end{equation*}
    Substituting this back to \eqref{est: lower bdd 1} yields
    \begin{equation}\label{der: lower bdd 2}
        \int_{B_{R}^+} \sqrt{2}|\na \tilde{w}|\sqrt{W(\tilde{w})}\,dz\geq \sum\limits_{i=1}^N \f{\sigma}{2}\mathcal{H}^1(\{z:d(\tilde{w}(z),a_i)=s_i\})-CR^{\f23}.
    \end{equation}
    Hence, inequality \eqref{ene lower bd tilde w} is further reduced to the following inequality if we take $\al<\f13$.
    \begin{equation}\label{est: lower bound level set}
         \sum\limits_{i=1}^N \f{\sigma}{2}\mathcal{H}^1(\{z:d(\tilde{w}(z),a_i)=s_i\})\geq \mathcal{J}_R-CR^{1-\al}.
    \end{equation}

We show that $\{z:d(\tilde{w}(z),a_i)<s_i\}_{i=1}^N$ forms an almost partition of $B_{R}^+$ up to a set with negligible measure. Take any $z_1\in B_{R}\setminus \left(\bigcup\limits_{i=1}^N \{z:d(\tilde{w}(z),a_i)<s_i\}\right)$, by Hypothesis ($\mathrm{H}_1$) we have
    \begin{equation*}
        W(\tilde{w}(z_1))\geq C(W)R^{-\f23},
    \end{equation*}
    which combined with the upper bound $J(\tilde{w},B_{R}^+)\leq CR$ yields
    \begin{equation*}
        CR\geq \int_{B_{R}^+} W(\tilde{w}(z))\,dz\geq \f{C}{R^{\f23}} \left| B_{R}^+\setminus \left(\bigcup\limits_{i=1}^N \{z:d(\tilde{w}(z),a_i)<s_i\}\right) \right|. 
    \end{equation*}
    Therefore, we arrive at
    \begin{equation}\label{wetting region meas}
      \left| B_{R}^+\setminus \left(\bigcup\limits_{i=1}^N \{z:d(\tilde{w}(z),a_i)<s_i\}\right) \right|\leq CR^{\f53}.  
    \end{equation}
    Set
    \begin{equation*}
        \eta:=\f{ \left| B_{R}^+\setminus \left(\bigcup\limits_{i=1}^N \{z:d(\tilde{w}(z),a_i)<s_i\}\right) \right|}{R^2}.
    \end{equation*}
    We define $\Om_R:=B_R^+\cup \{(x,y):-R<x<R,-\sqrt{1-\f{x^2}{R^2}}<y\leq 0\}$ and let $\{\tilde{S}_1,\tilde{S}_2,...\tilde{S}_N\}$ be a minimizer of the following constraint minimal partition problem with wetting regions.
    \begin{equation}\label{Problem 2}\tag{Problem 2}
    \begin{split}
       \min \ \ \f{\sigma}{2} \sum\limits_{i=1}^N &\mathcal{H}^1 (\pa \tilde{S}_i\cap \Om_2),\\
       \text{such that }& \ \bigcup\limits_{j: i_j=i} \tilde{I}_j\subset \pa \tilde{S}_i, \ \ 1\leq i\leq N,\\
       & \{(x,y):0<x<R, y=-\sqrt{1-\f{x^2}{R^2}}\}\subset \pa\tilde{S}_1,\\
       &\{(x,y):-R<x<0, y=-\sqrt{1-\f{x^2}{R^2}}\}\subset\pa \tilde{S}_2,\\
       & \left| \Om_R\setminus \left(\bigcup\limits_{i=1}^N  \tilde{S}_i\right)\right|\leq \eta R^2.
       \end{split}
    \end{equation}
    
We introduce a half-elliptical boundary layer below the $x$-axis to account for the boundary energy in \eqref{functional:min par map} and to render the domain $\Omega_R$ strictly convex. The existence of a minimizer for Problem 2 follows from the same arguments as in \cite[Theorem 3.1]{novack2025regularity}. The first observation is that each connected component $C$ of $\tilde{S}_j$, which does not overlap with the lower half boundary $\{(x,y):-R<x<R,\, y=-\sqrt{1-\frac{x^2}{R^2}}\}$, must meet $\pa B_R\cap \BR^2_+$ with one or more $\tilde{I}_\al$ such that $i_\al=j$. To see this, it suffices to notice that any interior island of $\tilde{S}_j$ can be ``eaten" by an adjacent phase to lower the total energy. Hence, the total number of connected components of $\tilde{S}_1,...,\tilde{S}_N$ is bounded by $k+2$, which only depends on $W$. By \cite[Theorems 1.4 and 1.8]{novack2025regularity}, for sufficiently small $\eta$, the configuration of ${\tilde{S}_1,\dots,\tilde{S}_N}$ satisfies the following properties. Although the original results are stated for the unit disk, they extend to strict convex domains without essential modification.

    \begin{itemize}
        \item Denote by $G$ the wetting region $\Om_R\setminus \left(\bigcup\limits_{i=1}^N  \tilde{S}_i\right)$. $\pa \tilde{S}_i$ is $C^{1,1}$ except at jump points on $\pa \Om_R $. $\pa \tilde{S}_i\cap \pa \tilde{S}_j\cap B_R^+$ is a finite union of line segments terminating either on $\pa \Om_R$ at a jump point or at an interior point belonging to $\pa G$.
        \item For every connected component $\mathcal{G}$ of $G$, $\partial \mathcal{G}$ consists of three arcs, each either a circular arc with curvature $\kappa=\kappa(\eta,R)$. Each arc can be written as $\partial \tilde{S}_i\cap \partial \mathcal{G}$ for some $i$. If two arcs of $\pa\mathcal{G}$ intersect at an interior point of $\Om_R$, then they meet tangentially.
    \end{itemize}

From the structure above we can easily bound the total number of connected components of $G$ by a constant $C$ depending only on $W$. Then we construct a map $\tilde{v}_R\in \mathrm{BV}(\Om_R;\{a_1,\dots,a_N\})$  in the following way: firstly set $\tilde{v}_R\equiv a_i$ on $\tilde{S}_i$; then for any connected component $\mathcal{G}\subset G$, suppose 
    \begin{equation*}
        \pa\mathcal{G}=(\pa \tilde{S}_i \cap \pa \mathcal{G})\cap(\pa \tilde{S}_j \cap \pa \mathcal{G})\cap(\pa \tilde{S}_k \cap \pa \mathcal{G}).
    \end{equation*}
   Pick $i\in\{i,j,k\}$ such that 
   $\pa \tilde{S}_i\cap\pa \mathcal{G}$ is the largest arc among three, and set $\tilde{v}_R=a_i$ on $\mathcal{G}$. Repeat this procedure to assign values for all the wetting regions $\mathcal{G}$ and finally obtain a complete partition of $\Om_R$ . Now we invoke \cite[Theorem 5.4]{ss2024} to obtain
    \begin{equation}\label{lower bdd level set tilde vR}
        \sum\limits_{i=1}^N \mathcal{H}^1 (\pa \tilde{S}_i\cap \Om_R)\geq \sum\limits_{i=1}^N \mathcal{H}^1 (\pa \{\tilde{v}_R^{-1}(a_i)\}\cap \Om_R)-C\eta^{\f12} R \geq \sum\limits_{i=1}^N \mathcal{H}^1 (\pa \{\tilde{v}_R^{-1}(a_i)\}\cap \Om_R)-CR^{\f56},
    \end{equation}
    where the constant $C$ only depends on $W$. The rough idea for this estimate is that the total area of the wetting region $G$ is $O(1/\kappa^2)$, and the boundary measure gain for $\tilde{v}_R$ is $O(1/\kappa)$. For details, see \cite{ss2024}.

    Now $\tilde{v}_R|_{B_R^+}$ can compete with the minimizer $v_R$ of Problem 1 since the trace condition on $\pa B_R\cap \BR_+^2$ is satisfied. Define the set $\mathcal{M}_1:=\{x\in (0,R): \tilde{v}_R(x,0)\neq a_1\}$. For almost every $x_0\in \mathcal{M}_1$, suppose $\tilde{v}_R(x_0)=a_i$ for some $i\neq 1$, then $\tilde{v}_R$ will transit from $a_i$ to $a_1$ along the small vertical segments $l_{x_0}:= \{(x_0,y): -\sqrt{1-\f{x_0^2}{R^2}}<y<0\}$, and therefore
    \begin{equation*}
        \pa \{\tilde{v}_R^{-1}(a_i)\}\cap l_{x_0}\neq \emptyset,\quad \pa \{\tilde{v}_R^{-1}(a_1)\}\cap l_{x_0}\neq \emptyset.
    \end{equation*}
    Consequently, we have 
    \begin{equation*}
    \sum\limits_{i=1}^N \f{\sigma}{2}\mathcal{H}^1 (\pa \{\tilde{v}_R^{-1}(a_i)\}\cap \{0<x<R, -\sqrt{1-\f{x^2}{R^2}}<y<0\})\geq  \sigma \ch(\mathcal{M}_1),
\end{equation*}
Similarly, for $\mathcal{M}_2:=\{x\in (-R,0): \tilde{v}_R(x,0)\neq a_2\}$ we have
 \begin{equation*}
    \sum\limits_{i=1}^N \f{\sigma}{2}\mathcal{H}^1 (\pa \{\tilde{v}_R^{-1}(a_i)\}\cap \{-R<x<0, -\sqrt{1-\f{x^2}{R^2}}<y<0\})\geq  \sigma \ch(\mathcal{M}_2),
\end{equation*}

Using \eqref{lower bdd level set tilde vR} we obtain
\begin{equation}\label{lower lower bdd level set 2}
\begin{split}
    \sum\limits_{i=1}^N \f{\sigma}{2}\mathcal{H}^1 (\pa \tilde{S}_i\cap \Om_R) &\geq \sum\limits_{i=1}^N \f{\sigma}{2}\mathcal{H}^1 (\pa \{\tilde{v}_R^{-1}(a_i)\}\cap \Om_R)-CR^{\f56}\\
    &\geq \sum\limits_{i=1}^N \f{\sigma}{2}\mathcal{H}^1 (\pa \{\tilde{v}_R^{-1}(a_i)\}\cap B_R^+)+\sigma (\ch(\mathcal{M}_1)+\ch(\mathcal{M}_2))-CR^{\f56},\\
    &=J_*(\tilde{v}_R,B_R^+)-CR^{\f56}\\
    &\geq \mathcal{J}_R-CR^{\f56}. 
\end{split}
\end{equation}

Set $V_i:= \{z\in B_R^+: d(\tilde{w}(z),a_i)<s_i\}$ for $1\leq i\leq N$. We now modify $\{V_i\}_{i=1}^N$ to obtain a competitor $\{\tilde{V}_i\}_{i=1}^N$ for Problem 2. By the definition of $\tilde{w}$, $\tilde{w}(z) =a_{i_j}$ for $z\in \pa B_R\cap \BR_+^2$ such that $\dist(z,\pa \tilde{I}_j)>1$. Thus $(\pa B_R\cap \BR_+^2)\setminus (\cup_i \pa V_i)$ consists of $k$ small arcs of size $O(1)$. For each such arc, we add to $V_i$ the region enclosed by the arc and its secant line, where $a_i$ is one of the two adjacent phases of this arc. The extra cost in perimeter $\sum_i \ch(\pa V_i)$ is $O(1)$. Then we define 
\begin{align*}
\tilde{V}_i=\begin{cases}V_i, & i\geq 3,\\
V_1\cup \{(x,y): 0<x<R,-\sqrt{1-\f{x^2}{R^2}}<y\leq 0\} & i=1,\\
V_2\cup \{(x,y): -R<x<0,-\sqrt{1-\f{x^2}{R^2}}<y\leq 0\} & i=2.
\end{cases}
\end{align*}

By the definition of $\tilde{w}$ on $\{(x,0):-R<x<R\},$ we have 
\begin{equation*}
    \ch(\{(x,0):-R<x<R\}\setminus ( V_1\cup V_2))\leq C(R^{\f23}+R^\alpha),
\end{equation*}
which further implies
\begin{equation}\label{lower bd tilde V}
   \sum\limits_{i=1}^N \f{\sigma}{2}\mathcal{H}^1 (\pa \tilde{V}_i\cap \Om_R) \leq  \sum\limits_{i=1}^N \f{\sigma}{2}\mathcal{H}^1 (\pa V_i\cap B_R^+) +CR^{\f23}.  
\end{equation}
This, together with \eqref{lower lower bdd level set 2} and the minimality of $\{\tilde{S}_i\}_{i=1}^N$, leads to
\begin{equation*}
\begin{split}
        \sum\limits_{i=1}^N \f{\sigma}{2}\mathcal{H}^1 (\pa V_i\cap B_R^+) &\geq \sum\limits_{i=1}^N \f{\sigma}{2}\mathcal{H}^1 (\pa \tilde{V}_i\cap B_R^+)-CR^{\f23}\\
     &\geq \sum\limits_{i=1}^N \f{\sigma}{2}\mathcal{H}^1 (\pa \tilde{S}_i\cap B_R^+)-CR^{\f23}\\
     &\geq \mathcal{J}_R-CR^{\f23}-CR^{\f56},
\end{split}    
\end{equation*}
which yields \eqref{est: lower bound level set} if we choose $\alpha<\f16$. This completes the proof of Proposition \ref{prop: ene equipartition}.
\end{proof}

\begin{lemma}\label{lem: structure of min par map} Assume $v_0\in \mathrm{BV}(\pa B_1\cap \BR_+^2; \{a_1,\dots,a_N\})$. There exists a minimizer $v$ of the functional $J_*(v,B_1^+)$ defined in \eqref{functional:min par map} subject to the boundary condition 
\begin{equation}\label{bdy trace cond}
    v=v_0\,\text{ on }\pa B_1\cap \BR_+^2\, \text{ in the sense of trace}.
\end{equation}
Set $S_i:=\{v^{-1}(a_i)\}$. Every connected component of $\pa S_i\cap\pa S_j\cap B_1^+$ is a line segment terminating at an interior triple junction $z\in \pa S_i\cap\pa S_j\cap\pa S_k\cap B_1^+$, at $z\in \pa B_1\cap \BR_+^2 $ which is a jump point of $v_0$, or at $z\in \{(-1,0), (1,0), (0,0)\}$. Moreover, for each triple $\{i,j,k\}$ of distinct indices there exist angles $\theta_i,\,\theta_j,\,\theta_k$ satisfying  
\begin{equation}\label{young's law}
    \f{\sin{\theta_i}}{\sigma_{jk}}=\f{\sin{\theta_j}}{\sigma_{ik}}=\f{\sin{\theta_k}}{\sigma_{ij}},\ \ \theta_i+\theta_j+\theta_k=2\pi.
\end{equation}
such that if $z\in B$ is an interior triple junction between $S_i,\, S_j,\, S_k$, then there exists $r_z>0$ such that $S_i\cap B_{r_z}(z)$ is a circular sector determined by $\theta_i$, and similarly for $j,k$. Finally, each connected component of $S_i$, which is denoted by $C_i$, is convex. Moreover, $\partial C_i\cap\partial B_1^+$ contains at least one arc of $\pa B_1^+$, which is either a connected arc of $v_0^{-1}(a_i)$ or one of the boundary segments on the $x$-axis, $\{(x,0):x\in(0,1)\}$ or $\{(x,0):x\in(-1,0)\}$.
    
\end{lemma}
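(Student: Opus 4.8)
\emph{Overview and reduction.} The plan is to reduce the whole statement --- existence, the segment/triple-junction structure, and the boundary behaviour --- to the standard regularity theory for weighted minimal partitions in a convex planar domain, by reflecting across the flat boundary. Given a competitor $v:B_1^+\to\{a_1,\dots,a_N\}$, extend it to $\hat v$ on the disk $B_1$ by $\hat v=v$ on $B_1^+$, $\hat v\equiv a_1$ on $B_1\cap\{y<0,\,x>0\}$, and $\hat v\equiv a_2$ on $B_1\cap\{y<0,\,x<0\}$. Since $v$ is valued in $\{a_1,\dots,a_N\}$, one has $d(\mathrm{tr}\,v,a_1)=\sigma_{(\mathrm{tr}\,v),1}$ a.e.\ on $\{x>0,\,y=0\}$ and likewise for $a_2$ on $\{x<0,\,y=0\}$, so counting the interfaces of $\hat v$ (those inside $B_1^+$, those along $\{y=0\}$, and the fixed one along $\{x=0,\,-1<y<0\}$) yields
\[
J_*(v,B_1^+)=\sum_{1\le i<j\le N}\sigma_{ij}\,\mathcal H^1\!\bigl(\partial\{\hat v^{-1}(a_i)\}\cap\partial\{\hat v^{-1}(a_j)\}\cap B_1\bigr)-\sigma_{12}.
\]
Thus minimizing $J_*(\cdot,B_1^+)$ subject to $v=v_0$ on $\partial B_1\cap\BR^2_+$ is equivalent to minimizing the weighted perimeter over all $\hat v\in\mathrm{BV}(B_1;\{a_1,\dots,a_N\})$ that equal $v_0$ on the upper half of $\partial B_1$, equal the fixed datum on the lower half, and are constrained to equal $a_1/a_2$ on $B_1\cap\{y<0,\,\pm x>0\}$. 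Existence for this partially Dirichlet, partially constrained partition problem follows from the direct method: competitors have uniformly bounded weighted perimeter (compare with the $\hat v$ built from the constant $a_1$, modified in a neighbourhood of $\partial B_1$ to realise the $\mathrm{BV}$ datum), the constraint is $L^1$-closed, and the weighted perimeter is $L^1$-lower semicontinuous.

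\emph{Interior structure.} On $B_1^+$ the minimizer $\hat v$ is an unconstrained local minimizer of the weighted perimeter, so by the regularity theory for minimal partitions (see \cite{novack2025regularity} and the references therein) each $\partial S_i\cap\partial S_j\cap B_1^+$ is a $C^{1,\alpha}$ curve away from a locally finite singular set; since there is no volume constraint its weighted curvature vanishes, hence it is a Euclidean geodesic, i.e.\ a straight segment, and the energy bound makes the number of such segments finite. The only non-constant minimizing cone of the partition energy in $\BR^2$ is the three-ray triple junction, whose stationarity is the force balance $\sum(\text{tension})(\text{unit direction})=0$, i.e.\ precisely the relations \eqref{young's law}; hence every interior singular point is a triple junction at which $S_i\cap B_{r_z}(z)$ is a circular sector of opening $\theta_i$ with $\theta_i,\theta_j,\theta_k$ solving \eqref{young's law}. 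Under $(\mathrm H_3)$ the strict triangle inequality keeps the associated force triangle non-degenerate, so $\theta_i\in(0,\pi)$; under $(\mathrm H_3')$ one simply has $\theta_i=2\pi/3$.

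\emph{Boundary anchoring.} It remains to locate the segment endpoints on $\partial B_1^+$; I expect the flat adhesion boundary to be the main obstacle, as it is the only genuinely new ingredient beyond the interior theory, and it is exactly the reflection of the first step that makes it tractable. Blowing up $\hat v$ at a continuity point $z_0$ of $v_0$ on the open curved arc produces a weighted minimal partition of a half-plane with constant Dirichlet data, which must be constant; hence interfaces reach $\partial B_1\cap\BR^2_+$ only at the (at most countably many) jump points of $v_0$, where the blow-up is a two-phase cone splitting the half-plane into two sectors, each of opening $<\pi$. Blowing up at an interior point $z_0=(x_0,0)$ of a flat segment with $x_0\in(0,1)$: by the reflection, the lower half near $z_0$ is the single phase $a_1$, so the blow-up is an interior minimizing cone of $\BR^2$ whose lower half-plane is entirely $a_1$. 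Such a cone is either constant, or the single line $\{y=0\}$ (which sends no interface into $\{y>0\}$), or a triple junction; but a triple junction with a sector equal to a full half-plane would have an angle $\pi$, contradicting $\theta\in(0,\pi)$ in \eqref{young's law} (equivalently, the force balance fails, since two collinear horizontal rays cannot be balanced by a third ray going into $\{y>0\}$). Hence no interface terminates in the open flat segments, and the only endpoints are interior triple junctions, jump points of $v_0$ on the curved arc, and the three corners $(\pm1,0),(0,0)$ where the type of boundary condition changes.

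\emph{Convexity and boundary contact of the cells.} Each component $C_i$ of $S_i$ is bounded by straight interface segments and arcs of $\partial B_1^+$. The circular arcs do not obstruct convexity, because $C_i\subset B_1$ and $B_1$ is convex, and at the three corners the opening available inside $\overline{B_1^+}$ is at most $\pi$. An interior vertex of $\partial C_i$ is necessarily a triple junction (two straight pieces of a single interface would have to meet smoothly there), with interior angle $\theta_i<\pi$; at a jump point of $v_0$ on the curved arc the interior angle is $<\pi$ by the two-sector blow-up of the previous step. Therefore every $C_i$ is convex. Moreover $C_i$ cannot be contained in the open half-disk: a bounded convex cell surrounded by finitely many phases can be contracted homothetically toward an interior point, the vacated thin shell being filled by the adjacent phases, which strictly lowers the energy --- the standard ``interior islands are eaten'' argument. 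So $\partial C_i$ meets $\partial B_1^+$; by the previous step, if this contact set lies on a flat segment it must be that whole segment $\{(x,0):x\in(0,1)\}$ or $\{(x,0):x\in(-1,0)\}$ (a proper sub-segment would create a forbidden interior endpoint of an interface), while on the curved arc it is a union of connected components of $v_0^{-1}(a_i)$. In either case $\partial C_i\cap\partial B_1^+$ contains an arc of the asserted type, which completes the proof.
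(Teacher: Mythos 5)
Your reduction by reflection (fixing $a_1$ and $a_2$ below the $x$-axis so that the boundary penalty $\int d(\mathrm{tr}\,v,a_i)\,d\ch$ becomes perimeter in the extended domain) is essentially the paper's own device, and the interior regularity and the treatment of jump points of $v_0$ on the curved arc are fine. The gap is in the \emph{boundary anchoring} step on the open flat segments. After reflection, the lower half-plane near $(x_0,0)$ is not merely ``entirely $a_1$'' as an observed fact about the minimizer: it is a \emph{constraint}, i.e.\ all admissible competitors are also forced to equal $a_1$ there. Consequently the blow-up at $(x_0,0)$ is a minimizing cone for an obstacle-type problem (minimize among cones fixed to be $a_1$ on $\{y<0\}$), not an unconstrained interior minimizing cone of $\BR^2$. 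Your classification ``constant, the line $\{y=0\}$, or a triple junction satisfying \eqref{young's law}'' is the classification of \emph{unconstrained} cones and does not apply. In particular, the configuration with $a_1$ on $\{y<0\}$ and the upper half-plane split into two sectors $a_2,a_3$ is an admissible candidate for the constrained problem; it is not an unconstrained minimizing cone (one angle equals $\pi$), but that is precisely why it cannot be excluded by Young's law or by the full force balance. For a junction pinned to the constraint line only the tangential component of the force balance holds as an identity, together with one-sided inequalities for inward perturbations, and ruling out the two-sector configuration under $(\mathrm{H}_3)$ requires combining these: the tangential stationarity gives $\sigma_{13}=\sigma_{12}+\sigma_{23}\cos\theta_2$, the one-sided stability in the $\theta_2$-direction gives $(\sigma_{13}-\sigma_{12})\cos\theta_2\geq\sigma_{23}$, and together they force $\cos^2\theta_2\geq 1$, a contradiction. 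Similar case-by-case work is needed for $l>2$ sectors and for $N>3$. Your heuristic that ``two collinear horizontal rays cannot be balanced by a third ray going into $\{y>0\}$'' silently invokes the full (unconstrained) balance, which is exactly what the constraint destroys.

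Two smaller points. First, for existence, the Dirichlet trace on $\pa B_1\cap\BR_+^2$ is not stable under $L^1$ convergence of a minimizing sequence in BV, so the direct method needs the extended-domain/constrained reformulation (or an equivalent device) to preserve the boundary datum in the limit; you set this up but should say explicitly that it is what rescues lower semicontinuity with the trace condition. Second, your ``homothetic contraction'' argument for eliminating interior islands is shakier than the standard one: contracting a convex cell and refilling the vacated shell creates new interfaces between the adjacent phases whose cost is not obviously dominated. The cleaner route, used in the paper, is to replace the island entirely by the adjacent phase with the longest shared boundary and apply the triangle inequality $\sigma_{jk}<\sigma_{ij}+\sigma_{ik}$ (or equality of the $\sigma$'s under $(\mathrm{H}_3')$) to see the energy strictly decreases.
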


\begin{proof}
    The existence of a minimizer is quite standard by compactness and lower-semicontinuity for BV functions. To treat the boundary energy (second and third terms in $J_*$ accounting for boundary mismatch), we consider an equivalent constraint minimization problem. Let $\Om:= B_1^+\cup \{(x,y):-1<x<1,-1<y<0\}$. For $v\in \mathrm{BV}(\Om,\{a_1,\dots,a_N\})$, define
    \begin{equation}\label{problem 3}
    \tag{Problem 3}    \begin{split}
           \min\ \tilde{J}_*(v,\Om)&:=\sum  \limits_{1\leq i<j\leq N}\sigma_{ij}\mathcal{H}^1(\pa S_i\cap\pa S_j \cap \Om),\\
           \text{such that}\qquad & v=v_0\,\text{ on }\pa B_1\cap \BR_+^2\, \text{ in the sense of trace},\\
           & v\equiv a_1\,\text{ on } \{(x,y): 0<x<1,-1<y<0\},\\
           & v\equiv a_2\,\text{ on } \{(x,y): -1<x<0,-1<y<0\},\\
        \end{split}
    \end{equation}

    Since $v$ is fixed on $\Omega\setminus B_1^+$, Problem 3 is equivalent to the original minimization problem, and any minimizer $v$ of Problem 3 restricts to a minimizer of $J_*(\cdot,B_1^+)$ on $B_1^+$ subject to the trace condition \eqref{bdy trace cond}. Existence of a minimizer for Problem 3 follows directly from \cite[Theorem 3.1]{novack2025regularity}, with the key point being preservation of the trace condition on $\partial B_1\cap\BR_+^2$ when passing to the limit of minimizing sequences.

    For the profile of $v$, most of the results follow from \cite[Theorem 1.6]{novack2025regularity}, which is originally stated for the unit disk. The configuration near the interior triple junctions and the intersections of interface $\pa S_i\cap \pa S_j$ with $\pa B_1\cap \BR_+^2$ at boundary jump points can be adapted to the half-disk domain without any modification. It suffices to prove the following two facts:
    \begin{itemize}
        \item A segment of $\pa S_i\cap \pa S_j$, can only intersect with the $x$-axis at $(0,0)$, $(1,0)$ or $(-1,0)$.
        \item Any connected component of $S_i$ must meet $\pa B_1^+$ along at least one arc on $\pa B_1\cap \BR_+^2$ or one of the two radii on the $x$-axis.  
    \end{itemize}
    For the first fact we assume by contradiction that $O:=(s,0)\subset \overline{\pa S_i\cap \pa S_j}$ for some $s\not\in\{ -1,0,1\}$. Without loss of generality, assume $0<s<1$. Then there exists a $r_0>0$ such that $B_{r_0}^+(O)$ can be split into several sectors, denoted by
    \begin{equation*}
        C_j:=\{(s+r\cos\theta, r\sin\theta): 0<r<r_0, \,\theta_{j-1}< \theta< \theta_{j}\}, \ j=1,...,l,
    \end{equation*}
    where $0=\theta_0<\theta_1<\dots<\theta_l= \pi$. On each sector $C_j$, there exists an index $i_j$ such that $v=a_{i_j}$ in $C_j$. We derive a contradiction for each of following situations. 
    
    \begin{case} $i_j=1$ for some $1\leq j\leq l$. We first consider $i_1=1$. Moreover we can set $i_{l+1}=1$. Then it is clear that a small displacement of the junction point $O$ in the direction $\theta=\frac{\theta_1}{2}+\frac{\pi}{2}$ strictly decreases the length of each interface $\partial S_{i_j}\cap \partial S_{i_{j+1}}$ for $j=1,\dots,l$, contradicting minimality. See Figure \ref{fig:case I}. Similarly one can derive a contradiction for other $j$ such that $i_j=1$.

    \begin{figure}[htt]
    \centering
    \begin{tikzpicture}[scale=2]

\def\R{2}

\coordinate (O) at (0,0); 
\coordinate (S) at (-0.06,0.25); 

\draw[thick] (-\R,0) arc[start angle=180,end angle=150,radius=\R] ;
\draw[thick] ({\R*cos(80)},{\R*sin(80)}) arc[start angle=80,end angle=0,radius=\R] ;
\draw[dashed] ({\R*cos(150)},{\R*sin(150)}) arc[start angle=150,end angle=80,radius=\R] ;

\draw[thick] (O) -- (0,0); 
\draw[thick] (O) -- ({\R*cos(0)}, {\R*sin(0)});
\draw[thick] (O) -- ({\R*cos(30)}, {\R*sin(30)});
\draw[thick] (O) -- ({\R*cos(80)}, {\R*sin(80)});
\draw[thick] (O) -- ({\R*cos(150)}, {\R*  sin(150)});
\draw[thick] (O) -- ({\R*cos(180)}, {\R*sin(180)});

\draw[dotted] (S) -- (0,0);
\draw[red,dashed] (S) -- ({\R*cos(30)}, {\R*sin(30)});
\draw[red,dashed] (S) -- ({\R*cos(80)}, {\R*sin(80)});
\draw[red,dashed] (S) -- ({\R*cos(150)}, {\R*sin(150)});
\draw[red,dashed] (S) -- ({\R*cos(180)}, {\R*sin(180)});

\fill[blue!30] (O) -- ({\R*cos(0)}, {\R*sin(0)}) arc[start angle=0,end angle=30,radius=\R] -- cycle;

\fill[blue!50,opacity=0.2] (O)--(-2,0)--(-0.06,0.25)--({2*cos(30)},{2*sin(30)})--(O);

\node at ({1.2*cos(15)}, {1.2*sin(15)}) {$a_1$};
\node at ({1.2*cos(55)}, {1.2*sin(55)}) {$a_{i_2}$};
\node at ({1.2*cos(165)}, {1.2*sin(165)}) {$a_{i_\ell}$};
\node at (-0.6,1.2) {$a_{i_3}$\,--\,$a_{i_{l-1}}$};
\node at (0,0) [below] {$(s,0)$};

\end{tikzpicture}
\caption{Case I: move the junction inside to reduce energy.}\label{fig:case I}
\end{figure}
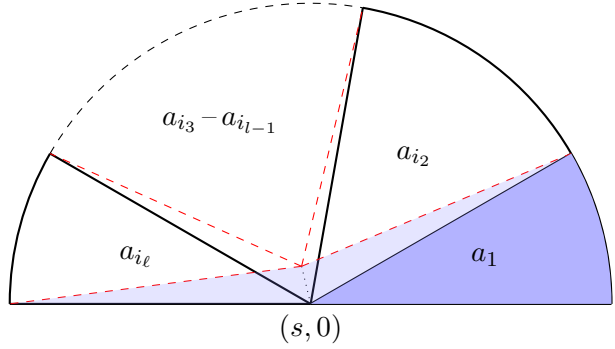
\end{case}

    \begin{case} $N=3$ and $l>2$. As shown in Case 1, $i_j\neq 1$ for all $1\leq j\leq l$, so the phases $a_2$ and $a_3$ must alternate on circular sectors $C_1,\dots,C_l$. Therefore, $C_1$ and $C_3$ correspond to the same phase, say $a_2$, separated by a sector $C_2$ occupied by $a_3$. Let $A:=\partial C_1\cap\partial C_2\cap\partial B_{r_0}(O)$, $B:=\partial C_2\cap\partial C_3\cap\partial B_{r_0}(O)$, and let $C'$ denote the triangle $ABO$. Redefine $v$ on $C_2$ by setting $v=a_2$ on $ C'$ and $v=a_3$ on $C_2\setminus C'$. This modification strictly reduces the energy by the triangle inequality, a contradiction.
    \end{case}

    \begin{case} $N=3$ and $l=2$. Assume $v=a_2$ on $C_1$ and $a_3$ on $C_2$. Denoted by $\theta_2\in (0,\pi)$ the central angle for $C_2$. The stability of the location of the junction point $O$ on the $x$-axis implies 
    \begin{equation*}
    \sigma_{13}=\sigma_{12}+\sigma_{23}\cos\theta_2.
    \end{equation*}
    Similarly, one-sided stability of the junction along the $\theta_2$ direction leads to 
    \begin{equation*}
        (\sigma_{13}-\sigma_{12})\cos\theta_2\geq \sigma_{23},
    \end{equation*}
    which, combined with the previous relation, yields a contradiction $\sigma_{23}\cos^2\theta_2\geq \sigma_{23}$.
    \end{case}

    \begin{case} $N>3$ and $\sigma_{ij}\equiv \sigma$. In this case, there must be a $C_i$ with an opening angle less than $\f{2\pi}{3}$. Suppose without loss of generality that $C_1$ has an opening angle $\theta_1<\f{2\pi}{3}$. As shown in Figure \ref{fig:case IV}, we can reduce the total energy by moving the junction point along the $\f{\theta_1}{2}$ direction, which again contradicts the minimality.  

    \begin{figure}[htt]
    \centering
\begin{tikzpicture}[scale=2]

\def\R{2}

\coordinate (O) at (0,0); 
\coordinate (A) at (\R,0); 
\coordinate (Op) at (0.46,0.3); 
\coordinate (P) at ({\R*cos(70)},{\R*sin(70)}); 

\draw[thick] (O) -- (A);
\draw[thick] (A) arc[start angle=0,end angle=70,radius=\R];
\draw[dashed,thick] ({\R*cos(70)},{\R*sin(70)}) arc[start angle=70,end angle=180,radius=\R];

\draw[blue,thick] (O) -- (P);
\draw[blue,thick] (O) -- (A);
\draw[thick] (O) -- (-2,0);

\draw[red,dashed,thick] (O) -- (Op) -- (P);
\draw[red,dashed,thick] (Op) -- (A);

\node[below] at (O) {$O$};
\node[above right] at (Op) {$O'$};
\node at (1.2,0.8) {$C_1$};

\end{tikzpicture}

 \caption{Case 4: move the junction along the angle bisector of $C_1$}\label{fig:case IV}
\end{figure}
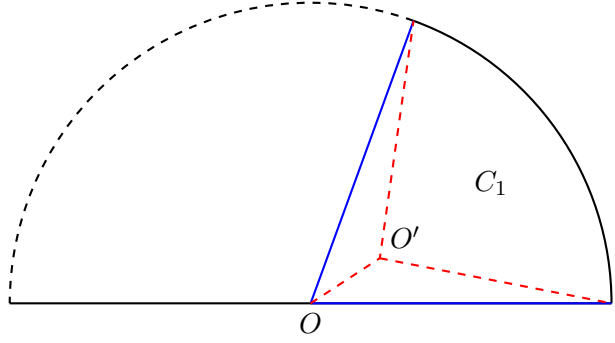
\end{case}

Therefore, we have verified the first fact that $\pa S_i\cap \pa S_j$ can only intersect with the $x$-axis at $(0,0)$, $(1,0)$ or $(-1,0)$.

For the second fact, we take a connected component $C$ of $S_i$ such that $\pa C$ does not coincide with any arc of $v_0^{-1}(a_i)$ or any of two radii on the $x$-axis. Assume $i=1$ without loss of generality. Then modifying $v$ within $C$ does not violate the boundary condition on $\pa B_1^+$. When $N=3$, set $\pa C=\gamma_2\cup \gamma_3$ where $\g_j:= \pa C \cap \pa S_j$ for $j=2,3$. Without loss of generality, assume $\ch(\g_2)\geq \ch(\g_3)$. If we take $v=a_2$ on $C$, then the total energy is changed by
\begin{equation*}
    \sigma_{23}\ch(\g_3)-\sigma_{12}\ch(\g_2)-\sigma_{13}\ch(\g_3)<\sigma_{12}\ch(\g_3)-\sigma_{12}\ch(\g_2)\leq 0,
\end{equation*}
which contradicts the minimality. Similarly, when $N>3$ with all equal $\sigma_{ij}$, we can eliminate the island $C$ by filling $C$ with any of surrounding phases to lower the total energy. We thus conclude that each connected component of $S_i$ must coincide with $\pa B_1^+$ along an arc of positive measure. The proof of Lemma \ref{lem: structure of min par map} is complete.

\end{proof}

 We define the quantities
    \begin{align*}
        &\mathcal{W}(r):= \f{1}{r}\int_{B_r^+} W(u)\,dz,\\
        &\mathcal{N}(r):=\f{1}{r}\int_{B_r^+} \f12 |\na u|^2\,dz.
    \end{align*}
Using Proposition \ref{prop: ene equipartition}, we obtain the following convergence results of $\mathcal{W}(r)$ and $\mathcal{N}(r)$.

\begin{lemma}\label{lem:conv of cal W, N}
There exists a constant $L$ such that 
\begin{equation}\label{conv of cal W,N}
    \lim\limits_{r\ri\infty} \mathcal{W}(r)=  \lim\limits_{r\ri\infty} \mathcal{N}(r)=L.
\end{equation}
\end{lemma}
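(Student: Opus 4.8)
The plan is to combine the almost-monotonicity formula from Lemma~\ref{lem:pohozaev} with the equipartition estimate of Proposition~\ref{prop: ene equipartition} to show that each of $\mathcal{W}(r)$ and $\mathcal{N}(r)$ has a limit as $r\to\infty$, and that the two limits coincide. First I would observe that the equipartition inequality \eqref{ineq: ene equiparition} says precisely that
\begin{equation*}
    \left| \int_{B_r^+}\tfrac12|\na u|^2\,dz - \int_{B_r^+}W(u)\,dz \right| \leq \int_{B_r^+}\left(\sqrt{W(u)}-\tfrac{1}{\sqrt2}|\na u|\right)\left(\sqrt{W(u)}+\tfrac{1}{\sqrt2}|\na u|\right)\,dz \leq C r^{1-\al/2}
\end{equation*}
by Cauchy--Schwarz, using the rough bound $J(u,B_r^+)\leq Cr$ from Lemma~\ref{lem:rough ene upper bdd} to control the second factor in $L^2$. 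Dividing by $r$ gives $|\mathcal{N}(r)-\mathcal{W}(r)|\leq C r^{-\al/2}\to 0$, so it suffices to prove that $\mathcal{W}(r)$ converges, and the common limit $L$ will then be shared by $\mathcal{N}(r)$.

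Next I would show $\mathcal{W}(r)$ converges using Lemma~\ref{lem:pohozaev}. Rewrite the right-hand side of \eqref{ineq: almost mon}: the term $\frac12|\pa_r u|^2-\frac12|\pa_T u|^2+W(u)$ equals $\frac12|\na u|^2+W(u)-|\pa_T u|^2$. Integrating the surface energy over $(r, 2r)$ via the coarea/Fubini identity and using equipartition once more (now in the form that the bulk defect is small, which forces the angular-derivative contribution to be small in an averaged sense), one finds that $|\pa_T u|$ is negligible in an $L^2$-average over dyadic annuli; more precisely, integrating \eqref{ineq: almost mon} from $r_1$ to $r_2$ yields
\begin{equation*}
    \mathcal{W}(r_2)-\mathcal{W}(r_1) \geq \frac12\int_{r_1}^{r_2}\frac{1}{r}\int_{\pa B_r^+\cap\BR_+^2}\left(\tfrac12|\na u|^2+W(u)-|\pa_T u|^2\right)\,d\ch\,dr - \frac{C}{r_1}.
\end{equation*}
Thus $r\mapsto \mathcal{W}(r) + \frac{C}{r}$ is ``almost monotone'' up to the sign-indefinite angular term; I would bound $\frac{1}{r}\int\int |\pa_T u|^2$ over $(r,2r)$ by the equipartition defect (since $\frac12|\na u|^2 \approx W(u)$ forces $|\pa_r u|^2\approx |\pa_T u|^2 + 2W(u) - |\pa_T u|^2$, wait — more carefully, equipartition plus $|\na u|^2 = |\pa_r u|^2 + |\pa_T u|^2$ gives an integrated control on $\int |\pa_T u|^2$), concluding that $\mathcal{W}(r)+\frac{C}{r} + o(1)$ is monotone, hence has a limit; since $\mathcal{W}$ is bounded below by $0$ and above by $C$ (Lemma~\ref{lem:rough ene upper bdd}), the limit $L\in[0,\infty)$ exists.

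The main obstacle I anticipate is making the angular term genuinely negligible: the inequality \eqref{ineq: almost mon} only controls $\frac12|\pa_r u|^2 - \frac12|\pa_T u|^2 + W(u)$, which is not a priori signed, so pure monotonicity fails. The resolution is to pair it with the companion ``outer'' Pohozaev-type inequality (or simply integrate \eqref{ineq: almost mon} over a dyadic range and feed in \eqref{ineq: ene equiparition}) so that the integral $\int_{r}^{2r}\frac{1}{s}\int_{\pa B_s^+}|\pa_T u|^2\,d\ch\,ds$ is absorbed into the $O(r^{1-\al})$ equipartition error — this requires noting that equipartition $\sqrt W \approx \frac{1}{\sqrt2}|\na u|$ together with $|\na u|^2=|\pa_r u|^2+|\pa_T u|^2$ and the surface energy bound \eqref{well behaved bdy data} forces the averaged tangential energy to be small. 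Once that averaged smallness is in hand, a standard Cauchy-sequence argument on dyadic scales (as in \cite{alikakos2024triple, ss2024}) upgrades ``almost monotone'' to ``convergent,'' and the half-space boundary terms are already handled by the $C/r^2$ error in \eqref{ineq: almost mon}, which integrates to a convergent tail. I would close by recording that $L \geq 0$ and, by Proposition~\ref{prop: compactness of blow-down map} applied along any sequence $r_j\to\infty$, $L = \lim_j J(u_{r_j}, B_1^+) = J_*(\tilde u, B_1^+)$ is in fact the energy of any blow-down limit, which will be used in the sequel.
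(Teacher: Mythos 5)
Your first step --- showing $|\mathcal{N}(r)-\mathcal{W}(r)|\leq Cr^{-\al/2}$ by factoring $W(u)-\tfrac12|\na u|^2$ and applying Cauchy--Schwarz with the equipartition estimate and the rough bound $J(u,B_r^+)\leq Cr$ --- is exactly the paper's argument, and it is correct. The overall strategy for the second step (almost-monotonicity plus equipartition, then a dyadic summation and boundedness of $\mathcal{W}$) is also the right one.

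However, the way you propose to handle the sign-indefinite term contains a genuine error. You claim that the averaged tangential energy $\int_r^{2r}\frac1s\int_{\pa B_s^+}|\pa_T u|^2\,d\ch\,ds$ can be ``absorbed into the $O(r^{1-\al})$ equipartition error,'' i.e.\ that equipartition forces it to be small. It does not: equipartition controls the difference $\tfrac12|\na u|^2-W(u)$ in an integrated sense, but says nothing about the split of $|\na u|^2$ into radial and tangential parts. In fact the tangential energy is the dominant contribution --- $\frac1r\int_{A^+_{r,2r}}\frac{1}{|z|}|\pa_T u|^2\,dz$ converges to $2L$, not to $0$; it is the \emph{radial} derivative that is asymptotically negligible (Lemma \ref{lem:radial der small}), and that fact is a \emph{consequence} of the present lemma, not available as an input. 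The correct manipulation is the opposite of the one you attempt: write $\tfrac12|\pa_r u|^2-\tfrac12|\pa_T u|^2+W(u)=|\pa_r u|^2+\bigl(W(u)-\tfrac12|\na u|^2\bigr)\geq W(u)-\tfrac12|\na u|^2$, discarding the nonnegative radial term. Integrating \eqref{ineq: almost mon} over $(R_1,R_2)$ with $R_2\leq 2R_1$ then gives
\begin{equation*}
\mathcal{W}(R_2)-\mathcal{W}(R_1)\geq \int_{A^+_{R_1,R_2}}\frac{1}{2|z|}\Bigl(W(u)-\tfrac12|\na u|^2\Bigr)\,dz-\frac{C}{R_1}\geq -CR_1^{-\al/2},
\end{equation*}
where the last inequality is the same Cauchy--Schwarz/equipartition estimate you already used in your first step. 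Dyadic summation extends this to all $R_2>R_1$, and combined with $0\leq\mathcal{W}(r)\leq C$ this gives convergence of $\mathcal{W}$. So the fix is short, but as written your argument rests on a false smallness claim for the tangential energy and would not go through. (A minor side remark: in your closing sentence the limit should satisfy $2L=J_*(\tilde u,B_1^+)$, since $\mathcal{W}(r)+\mathcal{N}(r)=J_r(u_r,B_1^+)$; $L$ itself is only half the partition energy.)
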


\begin{proof}
    Take $R_0$ as the constant in Proposition \ref{prop: ene equipartition}. We first show that for any $R_2>R_1>R_0$,
    \begin{equation}\label{decay of W(r)}
    W(R_2)-W(R_1)\geq -CR_1^{-\f{\al}{2}}.
    \end{equation}
    We first consider the case when $R_1<R_2\leq 2R_1$. Using \eqref{ineq: almost mon}, Lemma \ref{lem:rough ene upper bdd} and Proposition \ref{prop: ene equipartition} we have
    \begin{align*}
       &\mathcal{W}(R_2)-\mathcal{W}(R_1)\\
       \geq & \int_{R_1}^{R_2} \left(\f{1}{2r} \int_{\pa B_{r}\cap \BR^2_+}\left(\f{1}2|\pa_\nu u|^2-\f{1}2|\pa_T u|^2+W(u)\right)\,d\mathcal{H}^1-\f{C}{r^2}\right)\,dr\\
       \geq & \int_{A_{R_1,R_2}^+} \f{1}{2|z|} \left(W(u)-\f{1}{2}|\na u|^2\right)\,dz-\f{C}{R_1}\\
       \geq &-\f{1}{2R_1} \int_{A_{R_1,R_2}^+} \left| \sqrt{W(u)}+\f{1}{\sqrt{2}}|\na u|\right| \cdot \left|\sqrt{W(u)}-\f{1}{\sqrt{2}}|\na u|\right|\,dz-\f{C}{R_1}\\
       \geq & -\f{1}{2R_1} \left( 2J(u, B_{R_2}^+)\right)^{\f12} \left( \int_{B_{R_2}^+} \left( \sqrt{W(u)}-\f{1}{\sqrt{2}}|\na u| \right)^2\,dz \right)^{\f12}-\f{C}{R_1}\\
       \geq & -\f{1}{2R_1} \left(CR_2\right)^{\f12} \left(C R_2^{1-\alpha}\right)^{\f12}\\
       =& -C R_1^{-\f{\al}{2}}.
   \end{align*}

    For $R_2>2R_1$, let $k\geq 1$ be the integer such that $2^k R_1<R_2\leq 2^{k+1} R_1$, we compute
    \begin{align*}
        \mathcal{W}(R_2)-\mathcal{W}(R_1)& =\mathcal{W}(R_2)-\mathcal{W}(2^kR_1)+\sum\limits_{j=0}^{k-1}\left( \mathcal{W}(2^{j+1}R_1)-\mathcal{W}(2^jR_1) \right)\\
        &\geq -C\sum\limits_{j=0}^{k-1} (2^jR_1)^{-\f{\al}{2}}\geq -CR_1^{-\f{\al}{2}},
    \end{align*}
    which yields \eqref{decay of W(r)}. The existence of $\lim\limits_{r\to\infty} \mathcal{W}(r)$ follows by \eqref{decay of W(r)} and the rough upper bound $\mathcal{W}(r)\leq C$.

    Similarly, we have
    \begin{equation}\label{diff W(r) N(r)}
       \begin{split}
           &|\mathcal{W}(r)-\mathcal{N}(r)|\\
           \leq &\f1r\int_{B_r^+} \left|W(u)-\f{1}{2}|\na u|^2 \right|\,dz\\
           \leq & \f1r \left( 2J(u, B_{r}^+) \right)^{\f12}\left(  \int_{B_{r}^+} \left( \sqrt{W(u)}-\f1{\sqrt{2}}|\na u| \right)^2\,dz\right)^{\f12}\\
           \leq & \f1r (Cr)^{\f12} \left(Cr^{1-\al}\right)^{\f12}\leq Cr^{-\f{\al}{2}},\quad \forall r>R_0, 
       \end{split}
   \end{equation}
   which implies that $\lim\limits_{r\to\infty}\mathcal{N}(r)$ exists and equals to $\lim\limits_{r\to\infty}\mathcal{W}(r)$.
   \end{proof}

A direct consequence of the above calculation is:
\begin{lemma}\label{lem:radial der small}
    For every $0<\lambda_1<\lambda_2$ it holds 
    \begin{equation}\label{est: small rad der}
        \lim\limits_{r\to\infty} \int_{A_{\lambda_1r,\lambda_2r}^+} \f{1}{|z|}|\pa_\nu u|^2\,dz=0.
    \end{equation}
\end{lemma}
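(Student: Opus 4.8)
The plan is to revisit the computation in the proof of Lemma \ref{lem:conv of cal W, N}, but this time retaining the normal-derivative term $\f12|\pa_\nu u|^2$ that was discarded there. Concretely, I would integrate the almost monotonicity inequality \eqref{ineq: almost mon} in $s$ from $\lambda_1 r$ to $\lambda_2 r$, use the co-area identity $d\mathcal{H}^1\,ds = dz$ (so that $\f1s\,d\mathcal{H}^1\,ds = \f1{|z|}\,dz$) and the algebraic identity
\begin{equation*}
    \f12|\pa_\nu u|^2 - \f12|\pa_T u|^2 + W(u) = |\pa_\nu u|^2 + \left(W(u) - \f12|\na u|^2\right),
\end{equation*}
together with $\int_{\lambda_1 r}^{\lambda_2 r}\f{C}{s^2}\,ds \le \f{C}{\lambda_1 r}$. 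This gives
\begin{equation*}
    \mathcal{W}(\lambda_2 r) - \mathcal{W}(\lambda_1 r) \ge \int_{A^+_{\lambda_1 r,\lambda_2 r}} \f{1}{2|z|}\left(|\pa_\nu u|^2 + W(u) - \f12|\na u|^2\right)\,dz - \f{C}{\lambda_1 r},
\end{equation*}
hence, rearranging,
\begin{equation*}
    \int_{A^+_{\lambda_1 r,\lambda_2 r}} \f{1}{2|z|}|\pa_\nu u|^2\,dz \le \big(\mathcal{W}(\lambda_2 r) - \mathcal{W}(\lambda_1 r)\big) + \f{C}{\lambda_1 r} + \int_{A^+_{\lambda_1 r,\lambda_2 r}} \f{1}{2|z|}\left(\f12|\na u|^2 - W(u)\right)\,dz.
\end{equation*}

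Next I would show each of the three terms on the right vanishes as $r\to\infty$. The first does because $\mathcal{W}(\rho)\to L$ by Lemma \ref{lem:conv of cal W, N}, so $\mathcal{W}(\lambda_2 r) - \mathcal{W}(\lambda_1 r)\to L - L = 0$; the second is trivially $o(1)$. For the third, I bound $\f1{|z|}\le\f{1}{\lambda_1 r}$ on the annulus and estimate $\big|\f12|\na u|^2 - W(u)\big| = \big|\f{1}{\sqrt2}|\na u| - \sqrt{W(u)}\big|\cdot\big|\f{1}{\sqrt2}|\na u| + \sqrt{W(u)}\big|$ by Cauchy--Schwarz over $B^+_{\lambda_2 r}$: the first factor contributes $\big(C(\lambda_2 r)^{1-\al}\big)^{1/2}$ by the equipartition estimate of Proposition \ref{prop: ene equipartition}, while $\big(\f{1}{\sqrt2}|\na u| + \sqrt{W(u)}\big)^2 \le 2\big(\f12|\na u|^2 + W(u)\big)$ combined with the rough energy bound of Lemma \ref{lem:rough ene upper bdd} gives $\big(CJ(u,B^+_{\lambda_2 r})\big)^{1/2}\le (C\lambda_2 r)^{1/2}$ for the second. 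Hence the third term is at most $\f{C}{\lambda_1 r}(\lambda_2 r)^{1-\al/2} = C_{\lambda_1,\lambda_2}\, r^{-\al/2}\to 0$. Since the left-hand side is nonnegative and dominated by a quantity tending to $0$, \eqref{est: small rad der} follows (the harmless factor $2$ aside).

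I do not anticipate any real obstacle here: the mechanism is simply that $\mathcal{W}$ is, up to an $O(r^{-\al/2})$ defect, monotone, with the defect being exactly $-\int \f{1}{|z|}|\pa_\nu u|^2$ over the relevant annulus; once Lemma \ref{lem:conv of cal W, N} tells us that the two ends $\mathcal{W}(\lambda_1 r)$ and $\mathcal{W}(\lambda_2 r)$ share the common limit $L$, the defect on $A^+_{\lambda_1 r,\lambda_2 r}$ is squeezed to $0$. The equipartition estimate enters only to control the lower-order mismatch between $\f12|\na u|^2$ and $W(u)$, and all the bounds needed are precisely those already used in the proof of Lemma \ref{lem:conv of cal W, N}.
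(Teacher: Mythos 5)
Your argument is correct and coincides with the paper's own proof: the paper likewise integrates the almost monotonicity inequality \eqref{ineq: almost mon} over the annulus to get $\mathcal{W}(\lambda_2 r)-\mathcal{W}(\lambda_1 r)\geq \int_{A_{\lam_1r,\lam_2r}^+} \f{1}{2|z|}\left( W(u)-\f12|\na u|^2 +|\pa_\nu u|^2\right)\,dz-\f{C}{\lam_1r}$ and then concludes from the convergence of $\mathcal{W}$ and the equipartition-based bound \eqref{diff W(r) N(r)}. Your explicit Cauchy--Schwarz estimate of the $\f12|\na u|^2-W(u)$ term is exactly the content of \eqref{diff W(r) N(r)} adapted to the weighted annulus integral, so no new ingredient is needed.
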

\begin{proof}
    By \eqref{ineq: almost mon} we have
    \begin{equation*}
        \mathcal{W}(\lambda_2 r)-\mathcal{W}(\lambda_1 r)\geq \int_{A_{\lam_1r,\lam_2r}^+} \f{1}{2|z|}\left( W(u)-\f12|\na u|^2 +|\pa_\nu u|^2\right)\,dz-\f{C}{\lam_1r},
    \end{equation*}
    which together with \eqref{conv of cal W,N}, \eqref{diff W(r) N(r)} immediately gives \eqref{est: small rad der}.
\end{proof}

Lemma \ref{lem:radial der small} implies that every blow-down limit must be a minimal cone.
\begin{proposition}\label{prop: homogenity of blowdown}
    Assume $u$ is a minimizing solution of \eqref{eq:2D allen cahn} satisfying \eqref{bdy cond}--\eqref{uniform bound}. For any sequence $\{r_j\}\to\infty$, let $\tilde{u}\in \mathrm{BV}_{loc}(\mathbb{R}_+^2;\{a_1,\dots,a_N\})$ be the blow-down limit guaranteed in Proposition \ref{prop: compactness of blow-down map}. Then for any $i\neq j$ such that the reduced boundary $\Gamma^*_{ij}:=\pa\{\tilde{u}^{-1}(a_i)\}^*\cap \pa\{\tilde{u}^{-1}(a_j)\}^*\neq \emptyset$, $\Gamma^*_{ij}$ is necessarily a ray emanating from the origin. Moreover, there exist indices $i_1,\,i_2$ such that $\tilde{u}(x,0)\equiv a_{i_1}$ for $x>0$ and $\tilde{u}(x,0)\equiv a_{i_2}$ for $x<0$ in the sense of traces. In particular, $\tilde{u}$ is radially invariant.
\end{proposition}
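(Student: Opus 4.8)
The plan is to transfer the weighted radial decay of Lemma~\ref{lem:radial der small} to the blow-down limit and then exploit the fine structure of the limiting interface. First I would fix a sequence $r_j\to\infty$ along which $u_{r_j}\to\tilde u$ in $L^1_{loc}(\BR^2_+)$, as in Proposition~\ref{prop: compactness of blow-down map}; then $u_{r_j}$ is a local minimizer of $J_{r_j}$ and $J_{r_j}(u_{r_j},\cdot)\rightharpoonup J_*(\tilde u,\cdot)$ weakly-$*$ as Radon measures, with $J_*(\tilde u,\cdot)=\sum_{i<j}\sigma_{ij}\,\ch\mres\Gamma^*_{ij}$ in the interior. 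A change of variables $w=r_jz$ converts the conclusion of Lemma~\ref{lem:radial der small} into
\begin{equation*}
\frac{1}{2r_j}\int_{A^+_{\lambda_1,\lambda_2}}|\pa_\nu u_{r_j}|^2\,dz\ \longrightarrow\ 0\qquad\text{for every }0<\lambda_1<\lambda_2 ,
\end{equation*}
where $\pa_\nu$ denotes the radial derivative; equivalently, the radial part of the rescaled Dirichlet energy carries no mass in the limit away from the origin.

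Next I would invoke the refined $\Gamma$-limit picture. Proposition~\ref{prop: ene equipartition}, rescaled, already gives $\frac{1}{2r_j}|\na u_{r_j}|^2\,\mathcal{L}^2\rightharpoonup\frac12 J_*(\tilde u,\cdot)$ on compact subsets of $\BR^2_+$, and the standard $\Gamma$-convergence / varifold analysis for Allen--Cahn (cf.\ \cite{Baldo,gazoulis,bethuel2025asymptotics}) upgrades this to the matrix-valued convergence
\begin{equation*}
\frac{1}{2r_j}\,\na u_{r_j}\otimes\na u_{r_j}\,\mathcal{L}^2\ \rightharpoonup\ \tfrac12\sum_{i<j}\sigma_{ij}\,n_{ij}\otimes n_{ij}\,\ch\mres\Gamma^*_{ij},
\end{equation*}
$n_{ij}$ being a unit normal to $\Gamma^*_{ij}$; in words, the transition across each interface becomes asymptotically one-dimensional in the normal direction. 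Contracting this with the continuous bounded field $z\mapsto\frac{z}{|z|}\otimes\frac{z}{|z|}$ on a fixed annulus and using that the radial part on the left vanishes forces
\begin{equation*}
\sum_{i<j}\sigma_{ij}\Big(n_{ij}(z)\cdot\tfrac{z}{|z|}\Big)^2\,\ch\mres\Gamma^*_{ij}=0\quad\text{on }A^+_{\lambda_1,\lambda_2},
\end{equation*}
so that $n_{ij}(z)\perp z$ for $\ch$-a.e.\ $z\in\Gamma^*_{ij}$ and every pair $i\ne j$. An essentially equivalent route, avoiding the varifold machinery, would localize near a segment of $\Gamma^*_{ij}$, use the exact energy identity $J_{r_j}(u_{r_j},T)\to\sigma_{ij}\ch(\Gamma^*_{ij}\cap T)$ on a thin tube $T$ together with equipartition to show the transition profile has vanishing derivative along the segment, and then compute the radial component of the gradient directly.

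With the relation $n_{ij}\perp z$ in hand the structure is essentially forced. By Lemma~\ref{lem: structure of min par map} applied on each half-ball $B_\rho^+$, every $\Gamma^*_{ij}$ is a locally finite union of line segments; on such a segment $n_{ij}$ is constant, so $n_{ij}(z)\cdot z=0$ for a.e.\ $z$ forces the segment onto a line through the origin, hence onto a ray $l_\alpha$ with $\alpha\in(0,\pi)$. Two distinct rays through the origin meet only at the origin, so an interior triple junction is impossible---its three edges would all lie on the single line joining it to the origin---and an interface segment cannot terminate in the interior; hence each connected component of $\Gamma^*_{ij}$ is a full ray from the origin, and a short comparison (recolouring an interior sector squeezed between two $ij$-rays by a neighbouring phase strictly lowers $J_*$) shows this ray is unique. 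Therefore $\BR^2_+$ minus the interface is a union of open circular sectors on which $\tilde u$ is constant, i.e.\ $\tilde u(\lambda z)=\tilde u(z)$ for a.e.\ $z$ and all $\lambda>0$: radial invariance. Finally, were $\mathrm{tr}\,\tilde u$ non-constant on $\{(x,0):x>0\}$, it would jump at some $(x_0,0)$ with $x_0>0$, which would force the interface to meet every half-disk $B^+_\varepsilon(x_0,0)$ with $\varepsilon<x_0$; but the interface consists of rays through the origin, none meeting such a half-disk, a contradiction. Hence $\mathrm{tr}\,\tilde u\equiv a_{i_1}$ on $\{x>0\}$ and $\mathrm{tr}\,\tilde u\equiv a_{i_2}$ on $\{x<0\}$ for some indices $i_1,i_2$.

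The main obstacle is the middle step: converting $\frac{1}{2r_j}|\pa_\nu u_{r_j}|^2\,\mathcal{L}^2\rightharpoonup0$ into information about the interface geometry. This vanishing controls the limit only in a weak, measure-theoretic sense---it does not survive rescaling as a $W^{1,1}$-type estimate that would let one simply assert $\na\tilde u\cdot z=0$---so one genuinely needs the refined $\Gamma$-limit fact that the rescaled gradient becomes normal to the rectifiable interface, together with the segment regularity of Lemma~\ref{lem: structure of min par map} to localize and conclude. The secondary technical point, recurring throughout, is verifying that the boundary corner regions near $\pa\BR^2_+$ do not spoil either the matrix-valued weak convergence of the energy or the applicability of the minimizing-partition regularity near the $x$-axis; this is dealt with exactly as in the competitor constructions already used in Proposition~\ref{prop: ene equipartition}.
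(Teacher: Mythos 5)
The skeleton of your argument — vanishing radial Dirichlet energy from Lemma \ref{lem:radial der small}, plus the energy identity and the segment structure of Lemma \ref{lem: structure of min par map}, forces the interfaces to be rays through the origin, and a separate trace argument handles $\pa\BR^2_+$ — is the same as the paper's (which defers to \cite[Theorem 3.6]{ss2024}), and your endgame (rays cannot meet off the origin, recolouring kills repeated rays, the trace is locally constant away from $0$) is fine. But the pivotal step has a genuine gap: you carry it with the asserted tensor-valued convergence $\frac{1}{2r_j}\na u_{r_j}\otimes\na u_{r_j}\,\mathcal{L}^2\rightharpoonup\frac12\sum\sigma_{ij}n_{ij}\otimes n_{ij}\,\ch\mres\Gamma^*_{ij}$, citing it as ``standard''. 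For the \emph{vectorial} Allen--Cahn system it is not: \cite{Baldo,gazoulis} give only $L^1$ compactness and convergence of the scalar energy measures, and the statement that the transition is asymptotically one-dimensional normal to the interface is essentially the content of Section \ref{sec:sharp interface} of this paper (proved long after, and using, the present proposition). Your fallback route (``vanishing derivative along the segment on a thin tube'') is only sketched and faces the same difficulty you correctly identify: the weak vanishing $\frac{1}{2r_j}\int|\pa_\nu u_{r_j}|^2\to 0$ does not upgrade to a $W^{1,1}$ bound on $u_{r_j}$ itself.

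The elementary repair, which is what \cite{ss2024} actually does, is to pair the radial derivative with $\sqrt{W}$ rather than with itself. By Cauchy--Schwarz and the rough upper bound of Lemma \ref{lem:rough ene upper bdd},
\begin{equation*}
\int_{A^+_{\lambda_1,\lambda_2}}\big|\pa_\nu\, d\big(a_i,u_{r_j}(w)\big)\big|\,dw
\le \int_{A^+_{\lambda_1,\lambda_2}}\sqrt{2W(u_{r_j})}\,|\pa_\nu u_{r_j}|\,dw
\le \Big(\tfrac{1}{r_j}\!\int |\pa_\nu u_{r_j}|^2\Big)^{\f12}\Big(2r_j\!\int W(u_{r_j})\Big)^{\f12}\longrightarrow 0,
\end{equation*}
since the first factor tends to $0$ by (the rescaled) Lemma \ref{lem:radial der small} and the second is bounded. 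Because $d(a_i,\cdot)$ is Lipschitz on the range of $u$ and $u_{r_j}\to\tilde u$ in $L^1_{loc}$, the functions $d(a_i,u_{r_j})$ converge in $L^1_{loc}$ to $d(a_i,\tilde u)$, whose distributional radial derivative therefore vanishes; hence each set $\{\tilde u=a_i\}=\{d(a_i,\tilde u)=0\}$ is a cone, i.e.\ $\tilde u$ is radially invariant. From there your structural and trace arguments go through unchanged and yield that each component of $\Gamma^*_{ij}$ is a ray and that $\mathrm{tr}\,\tilde u$ is constant on each half-axis.
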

\begin{proof}
    The proof is almost identical to that of \cite[Theorem 3.6]{ss2024}. We omit the details.
    
\end{proof}

Since $\tilde{u}$ is radially invariant, it depends only on the polar angle, i.e. $\tilde{u}=\tilde{u}(\theta)$ for $\theta\in (0,\pi)$, in polar coordinates. Extend $\tilde{u}$ to $\pa \BR_+^2$:
\begin{equation*}
    \hat{u}(r,\theta)=\hat{u}(\theta):= \begin{cases}
    \tilde{u}(\theta),& 0<\theta<\pi,\\
    a_1, &\theta=0,\\
    a_2, &\theta=\pi.
    \end{cases}
\end{equation*}
Then $\hat{u}(\theta)\in \mathrm{BV}([0,\pi];\{a_1,\dots,a_N\})$. We end this section by the following proposition on the structure of $\hat{u}$, which provides a classification of minimal cones to \eqref{functional:min par map}. 
\begin{proposition}\label{prop:hat u}
   The function $\hat{u}$ has at most two discontinuity points, denoted by $0\leq \al_1\leq \al_2\leq \pi.$ Moreover, if $\al_1<\al_2$, then $\al_2-\al_1>\f23\pi$ when hypothesis $(\mathrm{H}_3')$ holds and $\al_1-\al_2\geq \theta_3$ when hypothesis $(\mathrm{H}_3)$ holds, where $\theta_3$ is determined by Young's law \eqref{young's law}.
\end{proposition}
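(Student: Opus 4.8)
\emph{Strategy.}
The plan is to reduce everything to the structure of a \emph{constrained} minimizing partition at the origin. By Proposition~\ref{prop: homogenity of blowdown}, $\tilde u$ is radially invariant, so $\hat u$ is a $\mathrm{BV}$ step function on $[0,\pi]$ with finitely many interior jumps $0<\beta_1<\dots<\beta_m<\pi$; call $q_0,\dots,q_m$ the interior-trace values on the consecutive sectors, so that the number of discontinuities of $\hat u$ in $[0,\pi]$ is $T=[q_0\ne a_1]+m+[q_m\ne a_2]$, and $J_*(\tilde u,B_\rho^+)=\rho\bigl(\sum_{l=1}^m\sigma_{q_{l-1}q_l}+\sigma_{1q_0}+\sigma_{q_m2}\bigr)$ for every $\rho$. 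Since $d(a_i,a_1)=\sigma_{i1}$, the boundary terms in $J_*$ are exactly the perimeter cost one would pay against a \emph{frozen} phantom phase $a_1$ filling $\{x>0,\ y<0\}$ and $a_2$ filling $\{x<0,\ y<0\}$: extending $\tilde u$ to $B_\rho(O)$ by these two frozen quadrants and restricting any competitor to the upper half-disk (where $\tilde u|_{B_\rho^+}$ minimizes $J_*(\cdot,B_\rho^+)$, and where the competitor's jump across $\{y=0\}$ against the phantom carries precisely the cost $d$) shows that this extension minimizes the pure interface functional $\tilde J_*$ on $B_\rho(O)$ \emph{among all competitors that retain the two frozen quadrants}. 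Thus near $O$ we face a constrained minimizing partition: the $m$ free radial interfaces $l_{\beta_1},\dots,l_{\beta_m}$, a possible radial interface along each half-axis (on $\{x>0\}$, resp.\ $\{x<0\}$, iff $q_0\ne a_1$, resp.\ $q_m\ne a_2$), and the frozen interface $l_{3\pi/2}$ joining the two phantom quadrants.

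\emph{Removing boundary mismatches.}
If $m\ge1$ and $q_0=a_k\ne a_1$, recolour the innermost sector to $a_1$: this deletes the interface along $\{0<x<\rho,\ y=0\}$ (cost $\sigma_{k1}\rho$) and changes the weight of $l_{\beta_1}$ from $\sigma_{kq_1}$ to $\sigma_{1q_1}$, for a net change $(\sigma_{1q_1}-\sigma_{kq_1}-\sigma_{k1})\rho<0$ by the triangle inequality in $(\mathrm H_3)$ (or by $0<\sigma$ in $(\mathrm H_3')$), contradicting minimality; likewise on the left. Hence if $m\ge1$ then $q_0=a_1$, $q_m=a_2$, so $T=m$; and if $m=0$ then $T\le2$ trivially. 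It therefore suffices to prove $m\le2$, and, when $m=2$ (with necessarily $q_0=a_1$, $q_1=a_i$, $q_2=a_2$, $i\notin\{1,2\}$, and middle opening $\alpha_2-\alpha_1=\beta_2-\beta_1$), the angle bound.

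\emph{The detachment competitor.}
The decisive perturbation moves the apex of the homogeneous cone from $O$ to a nearby point $q=t\mathbf e_\psi$, $\psi\in(0,\pi)$, \emph{detaching} the interior interface structure: near $O$ the configuration is made two-phase ($a_1$ on $\{3\pi/2<\theta<\psi\}$, $a_2$ on $\{\psi<\theta<3\pi/2\}$, the two separated by the segment $Oq$, which joins the unchanged frozen part), while the interfaces $l_{\beta_l}$ become the segments from $q$ to $R\mathbf e_{\beta_l}$; the boundary traces on $\partial B_R$ and the frozen quadrants are untouched. A direct first-variation computation gives $\frac{d}{dt}J_*\big|_{t=0^+}=\sigma_{12}-\sum_{l=1}^m\sigma_{q_{l-1}q_l}\cos(\psi-\beta_l)$, so minimality forces
\begin{equation}\label{eq:first-var-ineq}
\sum_{l=1}^m\sigma_{q_{l-1}q_l}\cos(\psi-\beta_l)\ \le\ \sigma_{12}\qquad\text{for all }\psi\in(0,\pi).
\end{equation}
Since every $\beta_l\in(0,\pi)$, the vector $\sum_l\sigma_{q_{l-1}q_l}e^{i\beta_l}$ lies in the open upper half-plane, so the supremum of the left side of \eqref{eq:first-var-ineq} equals its modulus, and \eqref{eq:first-var-ineq} is equivalent to $\bigl|\sum_{l=1}^m\sigma_{q_{l-1}q_l}e^{i\beta_l}\bigr|\le\sigma_{12}$. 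Under $(\mathrm H_3')$ this is $\bigl|\sum_{l=1}^m e^{i\beta_l}\bigr|\le1$; an elementary trigonometric estimate shows this fails whenever $m\ge3$ (with all $\beta_l\in(0,\pi)$), and for $m=2$ it reduces to $2+2\cos(\beta_2-\beta_1)\le1$, i.e.\ $\alpha_2-\alpha_1\ge\tfrac{2\pi}3$. Under $(\mathrm H_3)$ only three phases are available, so if $m\ge3$ two non-adjacent sectors carry the same phase separated by a \emph{free} sector, which can be absorbed via the strict triangle inequality exactly as in Cases~2–3 of the proof of Lemma~\ref{lem: structure of min par map}; hence $m\le2$. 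Finally, for $m=2$ with middle phase $a_i$, squaring $|\sigma_{1i}e^{i\alpha_1}+\sigma_{i2}e^{i\alpha_2}|\le\sigma_{12}$ and using the law of cosines in the nondegenerate triangle with sides $\sigma_{12},\sigma_{1i},\sigma_{i2}$ turns it into $\cos(\alpha_2-\alpha_1)\le\cos\theta_3$, i.e.\ $\alpha_2-\alpha_1\ge\theta_3$, where $\theta_3$ is the Young angle of \eqref{young's law}.

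\emph{Main obstacle.}
The delicate points are two. First, the reformulation of the boundary penalty as a \emph{constrained} (rather than free) minimizing partition against two frozen quadrants, and the verification that all competitor modifications used above (the recolourings, the detachment, the Case~2–3 absorptions) are admissible, i.e.\ never disturb the frozen quadrants and preserve the trace on $\partial B_R$. Second, the detachment competitor itself: because the $a_1\vert a_2$ interface is pinned to reach $O$ perpendicularly to $\pa\BR_+^2$ from the frozen side, $O$ is not an interior triple junction and Young's law holds there only as the one-sided inequality \eqref{eq:first-var-ineq} — the correct competitor is not a translation of the sectors (which would flip the boundary trace along a half-axis and spoil the estimate) but the detachment described above, and carrying out this construction rigorously, with the correct degenerate cases $\alpha_1=0$ or $\alpha_2=\pi$, is where the argument needs the most care. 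The remaining ingredients — the inequality $\bigl|\sum e^{i\beta_l}\bigr|\le1\Rightarrow m\le2$, and the translation of the weighted modulus bound into Young's law — are routine.
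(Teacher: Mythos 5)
Your detachment competitor, the resulting one-sided stability inequality $\sum_l\sigma_{q_{l-1}q_l}\cos(\psi-\beta_l)\le\sigma_{12}$, and its translation into Young's law via the law of cosines are correct, and for configurations with no boundary mismatch this is a cleaner, more unified route than the paper's case-by-case junction perturbations. But two steps fail.

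First, the ``removing boundary mismatches'' step is wrong. The recoloured map has a different trace on the arc $\pa B_\rho\cap\BR_+^2$ over $\{0<\theta<\beta_1\}$, so as an admissible competitor it must pay the jump cost $\sigma_{1k}\beta_1\rho$ along that arc, which you omit; including it, the net change is $(\sigma_{1q_1}-\sigma_{kq_1}-\sigma_{k1}+\sigma_{1k}\beta_1)\rho$, which under $(\mathrm H_3')$ equals $\sigma(\beta_1-1)\rho$ and is \emph{positive} whenever $\beta_1>1$. Indeed the conclusion you draw --- an interior jump forces $q_0=a_1$ and $q_m=a_2$ --- contradicts the statement being proved: the case $\al_1=0<\al_2<\pi$ (one boundary mismatch plus one interior jump) is explicitly permitted by Theorem \ref{main thm: cls of blow down}(b), and a direct check shows your recolouring does not beat it there. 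The repair is not to remove mismatches but to include them in the detachment, treating the mismatch along $\{x>0\}$ (resp.\ $\{x<0\}$) as a detachable ray at angle $0$ (resp.\ $\pi$) of weight $\sigma_{1q_0}$ (resp.\ $\sigma_{q_m2}$); e.g.\ for $\al_1=0$ this yields $\sigma\cos\psi+\sigma\cos(\al_2-\psi)\le\sigma$ and hence $\al_2\ge\f{2\pi}{3}$.

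Second, the claim that $\bigl|\sum_{l=1}^m e^{i\beta_l}\bigr|\le1$ is impossible for $m\ge3$ is false for $m\ge4$: with $\beta=(\d,2\d,\pi-2\d,\pi-\d)$ one gets $\bigl|\sum_l e^{i\beta_l}\bigr|=2\sin\d+2\sin 2\d<1$ for small $\d$, even though such a configuration is plainly not minimizing. The single global detachment therefore cannot rule out $m\ge4$; you need the partial detachment of two consecutive rays only (equivalently, the Steiner pinching used in the paper via Lemma \ref{lem: structure of min par map}), which gives $\beta_{j+1}-\beta_j\ge\f{2\pi}{3}$ for every consecutive pair and hence $m\le2$ because the gaps must fit inside $[0,\pi]$. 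The same partial detachment also supplies the missing justification for the $(\mathrm H_3)$, $m\ge3$ absorption, which as written leans on the invalid first step to pin $q_0$ and $q_m$.
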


\begin{proof}
    Since $\hat{u}\in \mathrm{BV}([0,\pi];\{a_1,\dots,a_N\})$, its discontinuity set consists of finitely many points $0\leq a_1< a_2< \cdots< a_k\leq \pi$. If $k\geq 2$, then for every $1\leq j\leq k-1$, there is $i_j$ such that $\hat{u}\equiv a_{i_j}$ on $(\al_j,\al_{j+1})$. Moreover, $\hat{u}\equiv a_1$ on $[0,\al_1)$ when $\al_1>0$ and $\hat{u}\equiv a_2$ on $(\al_k,\pi]$ when $\al_k<\pi$. We divide the proof into two cases, corresponding to $(\mathrm{H}_3')$ and $(\mathrm{H}_3)$ respectively.  

    \begin{enumerate}
    \itemsep=5pt
        \item $N=3$. We claim that for any $1\leq j\leq k-1$, the value $a_{i_j}$ cannot be $a_1$ or $a_2$. Otherwise, one would contradict the minimality, as in Case 1 in the proof of Lemma \ref{lem: structure of min par map}, by perturbing the junction point in the direction of $\f{\al_j}{2}$. Consequently, we obtain $k\leq 2$ and if $k=2$, then $\hat{u}\equiv a_3$ on $(\al_1,\al_2)$. It remains to show $\al_2-\al_1\geq \theta_3$ when $k=2$. Recall that $\theta_1,\theta_2,\theta_3$ are given by Young's law \eqref{young's law}. Assume by contradiction $\al_2-\al_1<\theta_3$, we claim that the $a_1$-$a_3$ interface and the $a_2$-$a_3$ interface can be pinched together to decrease the total energy. Let $\al_0:= \al_1+\pi-\theta_1$, then we have $\al_1<\al_0<\al_2$ because $\theta_3>\pi-\theta_1$. Consider perturbation of the junction in the direction $\al_0$, the stability condition implies
        \begin{equation*}
            \sigma_{23} \cos(\al_2-\al_1-(\pi-\theta_1)) +\sigma_{13}\cos(\pi-\theta_1)\leq \sigma_{12}.
        \end{equation*}
        This, together with \eqref{young's law} and the monotonicity of the cosine function on $(0,\pi)$, gives
        \begin{equation*}
        \begin{split}
            \sin\theta_3&\geq \sin\theta_1 \cos(\al_2-\al_1-(\pi-\theta_1)) +\sin\theta_2 \cos(\pi-\theta_1)\\
            &> \sin\theta_1 \cos(\theta_3-(\pi-\theta_1)) +\sin\theta_2 \cos(\pi-\theta_1)=\sin\theta_3,
            \end{split}
        \end{equation*}
        yielding a contradiction. 

        \item $N>3$ and $\sigma_{ij}\equiv \sigma$. From similar arguments as in the $N=3$ case, for any $1\leq j\leq k-1$, we have $\al_{j+1}-\al_j\geq \f{2\pi}{3}$. Therefore, $k\leq 2$ and $\al_2-\al_1\geq \f{2\pi}{3}$ when $k=2$.
    \end{enumerate}

    The proof of Proposition \ref{prop:hat u} is completed.

\end{proof}

\section{Uniqueness of the blow-down limit}\label{sec:uniqueness}

In this section we show that the blow-down limit $\tilde{u}$ is independent of the choice of sequence $\{r_j\}$. We present the proof under Hypothesis ($\mathrm{H}_{3}'$), i.e. all $\sigma_{ij}$ are equal. All the arguments work equally well for the case ($\mathrm{H}_3$).

Using Lemma \ref{lem:conv of cal W, N}, Proposition \ref{prop: compactness of blow-down map} and Proposition \ref{prop:hat u}, we have 
\begin{equation*}
    \lim\limits_{r\ri\infty} \mathcal{W}(r)=  \lim\limits_{r\ri\infty} \mathcal{N}(r)=\f{\sigma}2\text{ or }\sigma,
\end{equation*}
depending on the number of discontinuity angles for one of the extended blow-down limits $\hat{u}=\hat{u}(\theta)$. Henceforth, we assume that
\begin{equation*}
 \lim\limits_{r\ri\infty} \mathcal{W}(r)=\sigma,   
\end{equation*}
which is equivalent to requiring that any extended blow-down limit $\hat{u}(\theta)$ has exactly two discontinuity angles. The case $\lim\limits_{r\ri\infty} \mathcal{W}(r)=\f{\sigma}{2}$ follows analogously, with a simpler computation.

Utilizing Proposition \ref{prop: compactness of blow-down map}, Proposition \ref{prop:hat u} and Lemma \ref{lem:radial der small}, we have that for any $\e>0$, there exists a $R(\e)$ such that for every $R>R(\e)$, the following estimates hold: 
 \begin{align*}
    2\sigma -\f{\e}{2}\leq &\f{1}{R}\int_{B_R^+} \left( \f12|\na u|^2+W(u)
 \right)\,dz\leq 2\sigma+\f{\e}{2},\\
            \sigma-\f{\e}{4}\leq &\f{1}{R}\int_{A^+_{R,2R}}W(u)\,dz\leq \sigma+\f{\e}{4},\\
            \sigma-\f{\e}{4}\leq &\f{1}{R}\int_{A^+_{R,2R}}\f12|\na_T u|^2\,dz\leq \sigma+\f{\e}{4},\\
            \|u_R&- \tilde{u}\|_{L^1(B_2^+)} \leq \f{\e^2}{2}, 
        \end{align*}
for some $\tilde{u}$, which depends on $R$, having the structure described in Proposition \ref{prop:hat u}. In particular, there exists $\al_1,\al_2$ and $a_i$ ($i\neq 1,2$), all depending on $R$, such that $\al_2-\al_1\geq \f{2\pi}{3}$ and 
\begin{equation*}
    \tilde{u}(r,\theta)=\begin{cases}
        a_i, & \al_1<\theta<\al_2,\\
        a_1, & 0<\theta<\al_1, \text{ if }\al_1>0,\\
        a_2, & \al_2<\theta<\pi, \text{ if }\al_2<\pi.
    \end{cases}
\end{equation*}

It follows immediately that the third phase $a_i$ remains the same for all $R$. Without loss of generality, assume $i=3$. Now we fix $\e$ as a small parameter to be determined later. By Fubini's theorem and mean value theorem, we can find $R_0\in (R,2R)$ for any $R>R(\e)$, such that 
 \begin{equation}
        \label{energy est on bdy R0} 2\sigma-\e\leq \int_{\pa B_{R_0}\cap \BR^2_+}\left(\f12|\pa_T u|^2+W(u) \right) \,d\ch\leq 2\sigma+\e.
\end{equation}
\begin{equation}\label{a3 on bdy R0}
    \ch(\pa B_{R_0}\cap \BR_+^2\cap\{|u(z)-a_3|<\e\})>R_0. 
\end{equation}

Let $\delta=\delta(W)\ll 1$ be the same constant as in the proof of Proposition \ref{prop: ene equipartition}. Without loss of generality, we assume that $\e$ is very small compared to $\delta$ and $R_0$ is sufficiently large such that 
\begin{equation}\label{a1 a2 on bdy R0}
|u(R_0,0)-a_1|\leq \f{\delta}{3},\quad  |u(-R_0,0)-a_2|\leq \f{\delta}{3}.   
\end{equation}
For $1\leq i\leq N$, define the set
\begin{equation*}
T_i:=\{z\in\pa B_{R_0}\cap \overline{\BR_+^2}:\,|u(z)-a_i| \leq \f{\delta}{2}\}.     
\end{equation*}
By \eqref{a1 a2 on bdy R0} and \eqref{a3 on bdy R0} we have
\begin{equation*}
    (R_0,0)\in T_1,\ \ (-R_0,0)\in T_2, \ \ \ch(T_3)>R_0.
\end{equation*}
This further implies that there are at least two phase transitions ($a_1$-$a_3$, $a_2$-$a_3$) along $\pa B_{R_0}\cap \BR_+^2$. One can derive that $T_i=\emptyset$ for all $i\not\in \{1,2,3\}$ since otherwise the energy estimate \eqref{energy est on bdy R0} will be violated. We continue to define the following:
\begin{align*}
    \theta_3^1&:= \min\{\theta: (R_0\cos\theta,R_0\sin\theta)\in T_3 \},\\
    \theta_3^2&:= \max \{\theta: (R_0\cos\theta,R_0\sin\theta)\in T_3 \},\\
    \theta_1&:= \max\{\theta<\theta_3^1, (R_0\cos\theta,R_0\sin\theta)\in T_1 \},\\
    \theta_2&:= \min\{\theta>\theta_3^2, (R_0\cos\theta,R_0\sin\theta)\in T_2 \},\\
    \tilde{T}_1&:= \{(R_0\cos\theta,R_0\sin\theta): \theta\in [0,\theta_1]\},\\
    \tilde{T}_2&:= \{(R_0\cos\theta,R_0\sin\theta): \theta\in [\theta_2,\pi]\},\\
    \tilde{T}_3&:= \{(R_0\cos\theta,R_0\sin\theta): \theta\in [\theta_3^1,\theta_3^2]\},\\
    C_{13}&:=\text{ the open arc connecting }\tilde{T}_1 \text{ and }\tilde{T}_3,\\
    C_{32}&:=\text{ the open arc connecting }\tilde{T}_3 \text{ and }\tilde{T}_2.
\end{align*}

From this definition, there will be an $a_1$-$a_3$ transition on $C_{13}$ and an $a_2$-$a_3$ transition on $C_{23}$. Also for $z\in C_{13}\cup C_{23}$, it holds that $|u(z)-a_i|> \f{\delta}{2}$ for all $i$. Thus we have 
\begin{equation*}
    \ch(C_{13}\cup C_{23}) \leq C(W). 
\end{equation*}
\begin{equation*}
    \int_{C_{13}\cup C_{32}} \left(\f12|\pa_T u|^2+W(u)\right)\,d\ch\geq 2\sigma-C\delta^2.
\end{equation*}
This energy lower bound yields the uniform closeness of $u(z)$ to $a_i$ on $\Tilde{T}_i$ for $i=1,2,3$ if we choose $\varepsilon\ll \delta^2$, since by \eqref{energy est on bdy R0} most of energy is concentrated on arcs $C_{13}$ and $C_{32}$, rendering the variation of $u$ on $\tilde{T}_i$ to be small. More specifically, we have 
\begin{equation*}
    |u(z)-a_i|\leq C\delta\ll 1 \ \text{ on }\tilde{T}_i,
\end{equation*}
for some constant $C=C(W)$.

Set 
\begin{equation*}
    \theta_{13}:= \f{\theta_1+\theta_3^1}{2},\quad \theta_{32}:= \f{\theta_3^2+\theta_2}{2}.
\end{equation*}
Then the ray $l_{\theta_{13}}$($l_{\theta_{32}}$) passes through the center point of $C_{13}$($C_{32}$), demoted by $A_0$, $B_0$ respectively. Also, we define $v_0\in \mathrm{BV}(B_{R_0}^+;\{a_1,\dots,a_N\})$ as a minimizer of the following problem
\begin{equation}
    \tag{Problem 4}
    \begin{split}
        &\quad \min J_*(v, B_{R_0}^+),\ \text{ such that }\\
        &v_0(R_0\cos\theta,R_0\sin\theta)= \begin{cases} a_1, &   0\leq \theta< \theta_{13},\\
        a_3, &  \theta_{13}<\theta<\theta_{32},\\
        a_2, & \theta_{32}<\theta\leq \pi,
        \end{cases} \ \text{ in the sense of traces}.
    \end{split}
\end{equation}
We now introduce the approximate location of the sharp interface, which is denoted by $\Gamma_0$. If $\theta_{32}-\theta_{13}\geq \f{2\pi}{3}$, set $\G_0= (l_{\theta_{31}}\cup l_{\theta_{32}})\cap B_{R_0}^+$. Otherwise, let $\theta_3:=\f{\theta_{13}+\theta_{32}}{2}$ and we can choose a point $G_0$ on $l_{\theta_3}\cap B_{R_0}^+$ that minimizes the total distance $|G_0O|+|G_0A_0|+|G_0B_0|$. In the latter case, the segments $G_0O$, $G_0A_0$, $G_0B_0$ form angles of $\f{2\pi}{3}$ pairwise, and we set $\G_0= G_0O\cup G_0A_0\cup G_0B_0$. We then use the same competitor construction as in the proof of the upper bound in Proposition \ref{prop: ene equipartition}, i.e. constructing the phase transitions using the heteroclinic connections within a thin rectangular neighborhood of each branch of $\G_0$. The corresponding energy upper bound follows from an explicit computation, which is given by 
\begin{equation}\label{ene upper bd: 2 sigma}
    J(u,B_{R_0}^+)\leq \sigma \ch(\G_0)+ CR_0^{1-\al}.    
\end{equation}

\begin{figure}[htb]
\centering
\begin{subfigure}{0.45\textwidth}
\centering
\begin{tikzpicture}[scale=2]
  \draw[thick] (-1,0) arc (180:0:1);
  \draw[thick] (-1,0) -- (1,0);

  \coordinate (Pleft) at ({cos(150)}, {sin(150)});
  \coordinate (Pright) at ({cos(20)}, {sin(20)});
  \coordinate (O) at (0,0);

  \draw[blue, thick] (O) -- (Pleft);
  \draw[blue, thick] (O) -- (Pright);

  \node[left] at (Pleft) {$\theta_{32}$};
  \node[right] at (Pright) {$\theta_{13}$};
  \node[below] at (O) {$O$};
\end{tikzpicture}
\caption{$\Gamma_0=l_{\theta_{13}}\cup l_{\theta_{32}}$ when $\theta_{32}-\theta_{13}\geq \tfrac{2\pi}{3}$}
\end{subfigure}
\hfill
\begin{subfigure}{0.45\textwidth}
\centering
\begin{tikzpicture}[scale=2]

  \draw[thick] (-1,0) arc (180:0:1);
  \draw[thick] (-1,0) -- (1,0);

  \coordinate (Pleft) at ({cos(130)}, {sin(130)});
  \coordinate (Pright) at ({cos(30)}, {sin(30)});
  \coordinate (O) at (0,0);

  \coordinate (G) at (0.045,0.24);

  \draw[blue, thick] (O) -- (G);
  \draw[blue, thick] (G) -- (Pleft);
  \draw[blue, thick] (G) -- (Pright);

  \node[above right] at (Pright) {$A_0$};
  \node[above left] at (Pleft) {$B_0$};
  \node[below] at (O) {$O$};
  \node[above] at (G) {$G$};
\end{tikzpicture}
\caption{$\Gamma_0 = GO \cup GA_0 \cup GB_0$ when $\theta_{32}-\theta_{13}<\tfrac{2\pi}{3}$}
\end{subfigure}
\label{fig: Gamma0}
\caption{Definition of $\G_0$.}
\end{figure}

We aim to derive the lower bound 
$J(u, B_{R_0}^+)\geq \sigma\ch(\G_0)-CR_0^{1-\al}$. This estimate can be deduced from \eqref{ene lower bd}, which is already proved in Proposition \ref{prop: ene equipartition} via results on a minimal partition problem with vacuum regions. Here, however, we present a more direct proof based on a slicing argument, which exploits the simpler structure of the sharp interface. The idea originally comes from \cite[Proposition 3.1]{alikakos2024triple}. The first step is to slightly modify the boundary data on $\pa B_{R_0}$. Define a function $v$ on $\pa B_{R_0}\cap \BR_+^2$:
\begin{equation*}
        v(R_0\cos\theta,R_0\sin\theta)=\begin{cases}
        a_1, &  0\leq \theta \leq \theta_{13}-R_0^{-1},\\
        \text{smoothly connecting } a_1,\,a_3, & \theta_{13}-R_0^{-1}\leq \theta\leq \theta_{13}+R_0^{-1},\\
        a_3, & \theta_{13}+R_0^{-1}\leq \theta \leq \theta_{32}-R_0^{-1},\\
        \text{smoothly connecting } a_3,\,a_2, & \theta_{32}-R_0^{-1}\leq \theta\leq \theta_{32}+R_0^{-1},\\
        a_2, & \theta_{32}+R_0^{-1}\leq \theta\leq \pi.
        \end{cases}
\end{equation*}
Then we extend $v$ inside $B_{R_0}^+$ by setting
\begin{equation*}
    v(z)=\begin{cases}
        \text{linear interpolation between }v|_{\pa B_{R_0}} \text{ and }u(\f{R_0}{R_0-1} \cdot)|_{\pa B_{R_0-1}}, & z\in A_{R_0-1, R_0}^+,\\
        u(\f{R_0}{R_0-1} z), & z\in B_{R_0-1}^+.
    \end{cases}
\end{equation*}
One can show that the energy in the boundary layer $A_{R_0-1,R_0}^+$ is uniformly bounded by a constant $C(\delta, W)$. Thus we obtain
\begin{equation*}
    J(u,B_{R_0}^+)\geq J(v,B_{R_0-1}^+)\geq J(v, B_{R_0}^+)-C.
\end{equation*}
With this, the energy lower bound follows immediately from the following Lemma.

\begin{lemma}\label{lem:lower bd}
$J(v, B_{R_0}^+)\geq \sigma\ch(\G_0)-CR_0^{\f23}$. 
\end{lemma}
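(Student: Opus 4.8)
The plan is to prove the bound by a slicing argument in the spirit of \cite[Proposition 3.1]{alikakos2024triple}, carried out under $(\mathrm{H}_3')$ (the case $(\mathrm{H}_3)$ being entirely analogous). The pointwise input is that for any unit vector $e$ one has $\tfrac12|\pa_e v|^2+W(v)\geq\sqrt2\,|\pa_e v|\sqrt{W(v)}$, so that, using $|\na_q d(p,q)|=\sqrt{W(q)}$, along any line segment $I$ sliced in direction $e$
\begin{equation*}
  \int_I\Big(\tfrac12|\pa_e v|^2+W(v)\Big)\,d\ch\ \geq\ d\big(v(z_1),v(z_2)\big)\ \geq\ \sigma-C\big(|v(z_1)-a_i|^2+|v(z_2)-a_j|^2\big),
\end{equation*}
where $z_1,z_2$ are the endpoints of $I$, whenever $v$ is close to $a_i$ at $z_1$ and to $a_j$ ($j\neq i$) at $z_2$; here I use the triangle inequality for $d$ and $d(a_k,q)\leq C|q-a_k|^2$ near $a_k$. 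The second input is the interior localisation of $v=u(\tfrac{R_0}{R_0-1}\cdot)$: for $R_0$ large, $|v(z)-a_k|\leq CR_0^{-\beta}$ for a fixed $\beta>0$ at every $z$ in the phase-$a_k$ region $P_k$ determined by $\G_0$ with $\dist(z,\G_0)\geq\rho_0$, $\rho_0$ a fixed large constant; this follows from the $L^1$-closeness of the blow-down together with the density estimate (Lemma \ref{lem:cafarelli-cordoba}) and the variational maximum principle (Lemma \ref{lemma: maximum principle}), exactly as in \cite{geng2025uniqueness}. Finally, on $\pa B_{R_0}\cap\BR_+^2$ the function $v$ equals $a_1,a_3,a_2$ exactly on the three sub-arcs away from the $O(R_0^{-1})$-wide transition zones about the angles $\theta_{13},\theta_{32}$, while $v(x,0)=u_0(\tfrac{R_0}{R_0-1}x)$ on the diameter up to corner regions of bounded size.

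Let $\pi$ be the nearest-point projection of $B_{R_0}^+$ onto $\G_0$. The tree $\G_0$ partitions $B_{R_0}^+$ into the phase regions $P_1,P_2,P_3$, and its straight edges $\ell_m$ (two edges meeting at the origin when $\theta_{32}-\theta_{13}\geq\tfrac{2\pi}{3}$, three edges forming a $120^\circ$ junction at $G_0$ otherwise) each separate the two adjacent phases; let $\ell_m'$ be $\ell_m$ with the $\rho_0$-neighbourhoods of its endpoints removed, so $\sum_m\ch(\ell_m')=\ch(\G_0)-O(\rho_0)$. Since the fibres $\pi^{-1}(p)$, $p$ ranging over the open edges, are pairwise disjoint, dropping the (nonnegative) contributions of the fibres over the vertices gives
\begin{equation*}
  J(v,B_{R_0}^+)\ \geq\ \sum_m\int_{\ell_m'}\left[\ \int_{\pi^{-1}(p)}\Big(\tfrac12|\pa_{\nu_m}v|^2+W(v)\Big)\,d\ch\ \right]d\ch(p),
\end{equation*}
$\nu_m$ being the unit normal to $\ell_m$, with no overcounting — this is exactly how one circumvents the obstruction that slicing each edge by full chords of $B_{R_0}^+$ produces slabs overlapping in regions of area $\sim R_0^2$, so that the individual one-dimensional lower bounds could not be summed. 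The geometric heart of the matter is that each fibre $\pi^{-1}(p)$, $p\in\ell_m'$, is a segment of the normal line through $p$ containing $p$ in its interior and whose two endpoints are \emph{good}: an endpoint lies either on $\pa B_{R_0}^+$ — on the circular part, where $v$ equals the relevant phase exactly, or on the diameter, where $v=u_0(\tfrac{R_0}{R_0-1}x)$ — or on the medial axis between $\ell_m$ and an adjacent edge; in the last case, since the two edges meet at the shared vertex at an angle $\geq\tfrac{2\pi}{3}$ (valid in both cases, as the $a_3$-wedge of a minimal cone has opening $\geq\tfrac{2\pi}{3}$), that medial-axis point lies at distance $\geq\sqrt3\,\dist(p,\text{vertex})\geq\sqrt3\,\rho_0$ from $\G_0$ inside the correct phase region, so the localisation estimate applies there. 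Hence the inner integral is $\geq\sigma-CR_0^{-2\beta}$, except for the two families, of total $\ch$-length $O(1)$, of $p$'s whose fibre meets a transition zone on $\pa B_{R_0}^+$ (discarded at cost $O(1)$), and up to an extra error $C|u_0(\tfrac{R_0}{R_0-1}x)-a_k|^2$ whenever a fibre endpoint lands at $(x,0)$.

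Integrating over $p\in\ell_m'$ and summing over $m$: the diameter contributions are bounded by $C\int_0^\infty|u_0-a_1|^2+C\int_{-\infty}^0|u_0-a_2|^2<\infty$ by \eqref{u_0 conv to a1 a2}, since the assignment of each such $p$ to the point where its fibre meets the diameter is injective with bi-Lipschitz constant controlled by the edge slopes; the medial-axis contributions total $\leq CR_0^{-2\beta}\sum_m\ch(\ell_m')\leq CR_0^{1-2\beta}$; and $\sum_m\ch(\ell_m')=\ch(\G_0)-O(\rho_0)$. Therefore $J(v,B_{R_0}^+)\geq\sigma\,\ch(\G_0)-C-CR_0^{1-2\beta}\geq\sigma\,\ch(\G_0)-CR_0^{2/3}$ for $R_0$ large (decreasing $\beta$ in the final step if necessary), which is the claim. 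The main obstacle, and the only place real work is needed, is the verification of the ``good endpoints'' property: one must check that for $p$ away from the vertices the fibre $\pi^{-1}(p)$ cannot terminate on $\pa B_{R_0}^+$ inside a neighbourhood of a \emph{wrong} phase (it must exit the phase region $P_k$ adjacent to $\ell_m$ through $\pa B_{R_0}^+\cap\pa P_k$, because a line normal to $\ell_m$ meets $\ell_m$ only at $p$ and, by the angle condition, cannot reach the other edge bounding $P_k$), and that the medial axes recede linearly from $\G_0$; this is an elementary but somewhat lengthy case analysis using the convexity of $B_{R_0}^+$ and the $\tfrac{2\pi}{3}$-angle of minimal cones, and it is precisely where the reduction uses that $\G_0$ is a minimal cone rather than a generic interface.
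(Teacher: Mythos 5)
Your projection--slicing scheme has a genuine logical gap: the ``interior localisation'' you take as input is not available at this stage of the argument, and in fact it is (a strengthening of) what Lemma \ref{lem:lower bd} is ultimately used to prove. You assert that $|v(z)-a_k|\leq CR_0^{-\beta}$ for every $z$ in the phase region $P_k$ with $\dist(z,\G_0)\geq \rho_0$, $\rho_0$ a \emph{fixed} constant, and you cite the $L^1$-closeness of the blow-down, the density estimate, and the variational maximum principle. But the $L^1$ bound $\|u_{R}-\tilde u\|_{L^1(B_2^+)}\leq \e^2/2$ only controls $u$ at the macroscopic scale: combined with Lemma \ref{lem:cafarelli-cordoba} and Lemma \ref{lemma: maximum principle} it yields $|v(z)-a_k|\leq\gamma$ (a small constant, not decaying in $R_0$) only for $z$ at distance $\gtrsim \e R_0$ from the interface of the blow-down limit --- not at distance $\rho_0=O(1)$ from $\G_0$, and not with an $R_0^{-\beta}$ error. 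The $O(R_0^{1-\al/2})$ localisation of the diffuse interface is Lemma \ref{lem:loc of diffuse interface}, which the paper derives \emph{from} the sharp lower bound of Lemma \ref{lem:lower bd}; the $O(1)$ localisation with quantitative decay is only obtained in Section \ref{sec:sharp interface}. With the localisation actually available, your fibres must be truncated at distance $\sim\e R_0$ from the vertices of $\G_0$ before their medial-axis endpoints become ``good,'' which degrades the conclusion to $J(v,B_{R_0}^+)\geq\sigma\ch(\G_0)-C\e R_0$ --- an error linear in $R_0$, useless for the subsequent uniqueness argument.

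This is precisely the obstruction the paper's proof is designed to avoid. There, no a priori knowledge of the location of the diffuse interface is used: after rotating so that one edge of $\G_0$ is vertical, the level $y^*$ at which the horizontal slices begin to carry two transitions is defined \emph{intrinsically} from $v$ itself (via the measures $\lam_i(y)$), the horizontal and vertical energies are split by the weights $\kappa^2$ and $1-\kappa^2$ and bounded below separately, and optimising over $\kappa$ produces a lower bound $\sigma\left(R_0-y^*+\sqrt{R_0^2-4y^*R_0\cos(\theta_{32}-\theta_{13})+4{y^*}^2}\right)-CR_0^{2/3}$ whose minimum over all admissible $y^*$ is exactly $\sigma\ch(\G_0)$. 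In other words, the possible wandering of the interface is encoded in the free parameter $y^*$ and eliminated by a convexity/optimisation step, rather than excluded beforehand. If you want to salvage a projection-onto-$\G_0$ argument, you would need to replace your pointwise localisation hypothesis by a comparable intrinsic device (e.g.\ defining, for each edge, the first fibre on which the transition has occurred, and optimising over those positions), at which point you essentially recover the paper's proof. As written, the proposal is circular.
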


\begin{proof}
    The proof utilizes a slicing argument from \cite[Proposition 3.1]{alikakos2024triple}. Here we present the detailed argument for the case $\f{2\pi}{3}\leq \theta_{32}-\theta_{13}<\pi$, i.e. $\G_0$ consists of two segments. In this case, the lower bound becomes 
    \begin{equation}\label{lower bd: 2R0}
    J(v,B_{R_0}^+)\geq 2\sigma R_0-CR_0^{\f23}.    
    \end{equation}
    The same argument applies for the case when $\theta_{32}-\theta_{13}<\f{2\pi}{3}$, with slightly more involved computation.

    We first rotate $B_{R_0}^+$ so that the $a_2$-$a_3$ interface $l_{\theta_{32}}$ becomes vertical. Under this rotation, the domain $B_{R_0}^+$ is transformed into $B_{R_0} \cap \{(x,y) : y > \frac{x}{\tan\theta_{32}}\}$, which we denote by $\Omega_0$. In the new coordinates, we have 
    \begin{equation*}
        \G_0= \{(0,y): 0\leq y\leq R_0\}\cup \{(r\cos {\al_0}, r\sin{\al_0}): 0\leq r\leq R_0\}, \ \ \al_0:=\theta_{13}+\f{\pi}{2}-\theta_{32}\in (-\f{\pi}{2}, -\f{\pi}{6}].
    \end{equation*}
    The boundary condition of $v$ becomes
    \begin{equation*}
        \begin{split}
            \text{On the straight part: }& v(r\sin\theta_{32},r\cos\theta_{32}) = u_0(\f{R_0}{R_0-1}r), \quad r\in(-R_0+1,R_0-1),\\
            \text{On the semicircle: } & v(R_0\cos\theta,R_0\sin\theta)=\begin{cases}
                a_1, & \theta\in [\f{\pi}{2}-\theta_{32}, \f{\pi}{2}-\theta_{32}+\theta_{13}-R_0^{-1}),\\
                a_3, & \theta\in (\f{\pi}{2}-\theta_{32}+\theta_{13}+R_0^{-1},\f{\pi}{2}-R_0^{-1}),\\
                a_2, & \theta \in (\f{\pi}{2}+R_0^{-1},\f{3\pi}{2}-\theta_{32}].
            \end{cases}
        \end{split} 
    \end{equation*}
    On the $O(1)$ gaps on $\pa \Om_0$ separating those arcs(segments) described above, $v$ interpolates smoothly. See Figure \ref{fig:lower bd on Om0} for the domain $\Omega$ and the boundary data. In the argument below, we ignore these small gaps and assume instead that $v$ transitions sharply from one phase to another along $\partial\Omega_0$. This simplification does not affect the main estimate but considerably streamlines the presentation.

    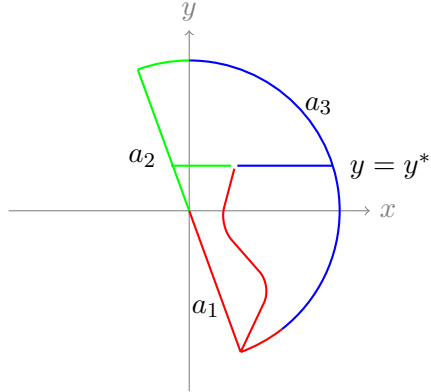
\begin{figure}[htb]
    \centering
    \begin{tikzpicture}[scale=2]

\draw[->, thin, gray] (-1.2,0) -- (1.2,0) node[right] {$x$};
\draw[->, thin, gray] (0,-1.2) -- (0,1.2) node[above] {$y$};

\draw[thick, blue] (0,1) arc (90:-52:1);

\draw[thick, green]  ({cos(110)}, {sin(110)}) arc (110:90:1);
\draw[thick, green] (0,0) -- ({cos(110)}, {sin(110)});
\draw[thick, red] (0,0) -- ({cos(70)},{-sin(70)});
\draw[thick, green] (-0.12,0.3)--(0.28,0.3);
\draw[thick, blue] (0.32,0.3) -- (0.95,0.3); 
\draw[thick, red] ({cos(52)},{-sin(52)}) arc (-52:-70:1);

\draw[thick, red, rounded corners=7pt] ({cos(70)},{-sin(70)}) --(0.55,-0.5) -- (0.2, -0.1) -- (0.3,0.28);

\node[left] at (-0.15,0.35) {$a_2$};
\node[right] at (-0.05,-0.65) {$a_1$};
\node[right] at (0.7,0.7) {$a_3$};
\node[right] at (1,0.3) {$y=y^*$};

\end{tikzpicture}
\caption{Red: $a_1$, Green: $a_2$, Blue: $a_3$}
\label{fig:lower bd on Om0}
\end{figure}

Set the horizontal line segments $\g_y$ as
\begin{equation*}
    \g_y:= \{(x,y): x\in\BR\}\cap \Om_0,\quad y\in[R_0\cos\theta_{32},R_0].
\end{equation*}
Define functions $\lam_1(y)$, $\lam_2(y)$, $\lam_3(y)$ by
\begin{equation*}
    \lam_i(y):=\ch(\g_y\cap \{|v(x,y)-a_i|\leq R_0^{-\f16}\}),\ i\in\{1,2,3\},\ y\in[R_0\cos\theta_{32},R_0].
\end{equation*}
Set
\begin{equation*}
    y^*:= \min\{y\in [0,R_0]: \lam_2(y)+\lam_3(y)\geq \ch(\g_y)-R_0^{\f23}\}.
\end{equation*}
When $|y-R_0|\ll 1$, $\ch(\g_y)<R_0^{\f23}$, thus the set on which we minimize to get $y^*$ is not empty. We further define
\begin{equation*}
    \Om_1:=\Om_0\cap \{y\geq y^*\},\quad \Om_2:=\Om_0\cap\{y<y^*\}.
\end{equation*}

On $\Om_1$, when $y\in (R_0\cos(\pi-\theta_{32}),R_0)$, $\g_y$ intersects with $\pa\Om_0$ at two points on the semicircle, where $v$ takes the value of $a_2$ at one point and $a_3$ at the other. Thus if $y^*\geq R_0\cos(\pi-\theta_{32})$, we have 
\begin{equation*}
J(v,\Om_1)\geq \int_{y^*}^{R_0}dy \left(\int_{\g_y}\left(\f12|\pa_x v|^2+W(v)\right)\,dx \right)\geq (R_0-y^*)\sigma.    
\end{equation*}
If $y^*\in [0,R_0\cos(\pi-\theta_{32})]$, for $y\in (y^*, R_0\cos(\pi-\theta_{32})$, $\g_y$ meets $\pa\Om_2$ at a point on the straight part, where the value of $v$ is close to $a_2$, and a point on the semicircle, which is a $a_3$ point. Utilizing Lemma \ref{lemma: 1D energy estimate} and \eqref{u_0 conv to a1 a2}, We compute 
\begin{equation}\label{ene on Om1}
\begin{split}
    J(v,\Om_1)&  \geq \int_{y^*}^{R_0}dy \left(\int_{\g_y}\left(\f12|\pa_x v|^2+W(v)\right)\,dx \right)\\
    &\geq \int_{y^*}^{R_0\cos(\pi-\theta_{32})} dy\int_{\g_y } \left(\f12|\pa_x v|^2+W(v)\right)dx+R_0(1-\cos(\pi-\theta_{32}))\sigma \\
    &\geq \int_{y^*}^{R_0\cos(\pi-\theta_{32})} \left(\sigma-C\left|u_0\left(\f{R_0}{R_0-1}\f{y}{\cos\theta_{32}}\right)-a_2\right|^2\right) \,dy+ R_0(1-\cos(\pi-\theta_{32}))\sigma \\
    & \geq (R_0-y^*)\sigma-C, 
\end{split}
\end{equation}
for some constant $C$ depending only on $W$ and $u_0$. 

For the domain $\Om_2$, we first show that there exists a constant $C(W)$ such that 
\begin{equation}\label{lower bd: a3 on Om2}
    \ch(\left\{ y\in(0,y^*): \lam_i(y)=0,\,\forall i\neq 2,3 \right\})\leq CR_0^{\f23}.
\end{equation}
For any $y\in (0,y^*)$ satisfying $\lam_i(y)=0$ for any $i\neq 2,3$, by definition of $y^*$ we have 
\begin{equation*}
    \ch(\g_y\cap \{|v(z)-a_i|>R_0^{-\f16},\,\forall i\})>R_0^{\f23}.
\end{equation*}
This implies
\begin{equation*}
    \int_{\g_y} W(v) \,dx\geq CR_0^{\f13}.
\end{equation*}
Combining this with the rough upper bound \eqref{est: rough upper bdd} gives \eqref{lower bd: a3 on Om2}. We denote by 
\begin{equation*}
 K:=\left\{ y\in(0,y^*): \lam_i(y)=0,\,\forall i\neq 2,3 \right\}   
\end{equation*}
For any $y\in (0,y^*)\setminus K$, $\g_y$ intersects with $\pa \Om_2$ at an $a_2$ point and an $a_3$ point, and passes through an $a_i$ point for some $i\neq 2,3$, which leads to two phase transitions along $\g_y$. For $y\in (R_0\cos(\theta_{32}-\theta_{13}),0)$, there is at least one $a_1$-$a_3$ transition along $\g_y$. Let $\kappa \in [0,1]$ be an arbitrary parameter. We compute utilizing Lemma \ref{lemma: 1D energy estimate} and \eqref{u_0 conv to a1 a2}:
\begin{align*}
    &\int_{\Om_2} \left(\f12|\pa_x v|^2 +\kappa^2 W(v)\right)\,dxdy\\
    \geq & \left(\int_{(0,y^*)\setminus K}+\int_{R_0\cos(\theta_{32}-\theta_{13})}^0 \right) \left(\int_{\g_y}\left(\f12|\pa_x v|^2 +\kappa^2 W(v)\right)\,dx \right)\,dy\\
    \geq & \left(\int_{(0,y^*)\setminus K}+\int_{R_0\cos(\theta_{32}-\theta_{13})}^0 \right) \left(\int_{\g_y}\left(\sqrt{2} \kappa| \pa_x v| \sqrt{W(v)}\right)\,dx \right)\,dy\\
    \geq &\kappa \int_{(0,y^*)\setminus K} \left(2\sigma- CR_0^{-\f13}-C|u\left(\f{R_0}{R_0-1}\f{ y}{\cos\theta_{32}}\right)-a_2|^2\right)\,dy \\
    &\qquad + \kappa \int_{R_0\cos(\theta_{32}-\theta_{13})}^0 \left(\sigma-C|u\left(\f{R_0}{R_0-1}\f{ y}{\cos\theta_{32}}\right)-a_1|^2\right)\,dy\\
    \geq & (2y^*-R_0\cos(\theta_{32}-\theta_{13}))\kappa\sigma-CR_0^{\f23},
\end{align*}

In the vertical direction, set 
\begin{equation*}
    P:=\{x\in (0, R_0\sin(\theta_{32}-\theta_{13})): |v(x,y^*)-a_i|<R_0^{-\f16},\, i=2\text{ or }3\}.
\end{equation*}
For every $x\in P$, the vertical line $\{(x,y):y\in \BR\}\cap \Om_2$ meets $\g_{y^*}$ at an $a_2$ or $a_3$ point and meets $\pa \Om_0$ at $a_1$. We calculate similarly as the horizontal energy to obtain 
\begin{align*}
    &\int_{\Om_2} \left(\f12|\pa_y v|^2+(1-\kappa^2)W(v)\right)\,dydx\\
    \geq & \int_{P} \left(  \int_{(x,y)\in \Om_2} \sqrt{2(1-\kappa^2)} |\pa_y v|\sqrt{W(v)} \,dy \right)\,dx\\
    \geq & \sqrt{1-\kappa^2} \sigma R_0 \sin(\theta_{32}-\theta_{13})-CR_0^{\f23}. 
\end{align*}

Combining the energy in both horizontal and vertical directions leads to
\begin{equation}\label{ene on Om2}
\begin{split}
    J(v,\Om_2)&\geq \max\limits_{k\in [0,1]} \left\{(2y^*-R_0\cos(\theta_{32}-\theta_{13})) \kappa \sigma +R_0\sin(\theta_{32}-\theta_{13}) \sqrt{1-\kappa^2}\sigma \right\}-CR_0^{\f23}\\
    &=\sigma \sqrt{R_0^2\sin^2(\theta_{32}-\theta_{13})+(2y^*-R_0\cos(\theta_{32}-\theta_{13}))^2}-CR_0^{\f23}\\
    &=\sigma\sqrt{R_0^2- 4y^* R_0\cos(\theta_{32}-\theta_{13})+4{y^*}^2}-CR^{\f23}.
\end{split}     
\end{equation}

Add the energy estimate on $\Om_1$ \eqref{ene on Om1} implies
\begin{equation}\label{lower bd by y*}
    J(v,\Om_0)\geq \sigma\left(R_0-y^*+\sqrt{R_0^2- 4y^* R_0\cos(\theta_{32}-\theta_{13})+4{y^*}^2}\right)-CR_0^{\f23}.
\end{equation}

Since $\f{2\pi}{3}\leq \theta_{32}-\theta_{13}<  \pi$, we have 
\begin{equation*}
    \begin{split}
        &R_0-y^*+\sqrt{R_0^2- 4y^* R_0\cos(\theta_{32}-\theta_{13})+4{y^*}^2} \\
        \geq & R_0-y^*+\sqrt{R_0^2+2y^*R_0+4{y^*}^2}\geq 2R_0,\quad \text{ for }y^*\geq 0,
    \end{split}
\end{equation*}
where the minimum is attained when $y^*=0$. This implies \eqref{lower bd: 2R0}. 

When $\theta_{32}-\theta_{13}<\f{2\pi}{3}$, $\G_0$ has a third branch, corresponding to the $a_1$-$a_2$ interface. In such a case, we can rotate $\Om_0$ based on $G_0$ so that $G_0B_0$ becomes the new $y$-axis. All computations proceed similarly in this case, and the leading term  $\sigma\ch(\Gamma_0)$ in the lower bound, is obtained by considering a minimization problem with a slightly different function of $y^*$. 

\end{proof}
\begin{rmk}
    If we assume ($\mathrm{H}_3$) instead of ($\mathrm{H}_3'$), the proof above requires modifications to account for the unequal partition at the triple junction. For a detailed discussion, we refer interested readers to \cite[Section 7]{alikakos2024triple}.
\end{rmk}

So far, we have established that 
\begin{equation*}
    |J(u,B_{R_0}^+)-\sigma \ch(\Gamma_0)|\leq CR_0^{1-\al},
\end{equation*}
for some $\al>0$. For $\g>0$, we define the \emph{diffuse interface} 
\begin{equation*}
    \mathcal{I}_{\g,R}:= \{z\in B_{R}^+: |u(z)-a_i|>\g,\ \forall i=1,...,N\},
\end{equation*}
which represents the region that separates coexisting phases $\{|u(z)-a_i|\leq \g\}$. From the way the energy lower bound is calculated, we have the following result on the closeness of the diffuse interface $\mathcal{I}_{\g,R}$ to the sharp interface $\Gamma_0$. 
\begin{lemma}\label{lem:loc of diffuse interface}
    There exists a constant $C=C(\g,W)$, such that for sufficiently small $\varepsilon$ and the associated $R_0$,
    \begin{equation}\label{est: loc diffuse interface}
        \mathcal{I}_{\g,R_0}\subset N_{CR_0^{1-\f{\al}{2}}}(\G_0):=\{z\in B_{R_0}^+: \dist(z,\G_0)\leq C R_0^{1-\f{\al}{2}}\}.
    \end{equation}
    Moreover, there are constants $k,K>0$ such that if $z\not\in N_{CR_0^{1-\al/2}}(\G_0\cup \pa B_{R_0}^+)$, then 
    \begin{equation*}
        \min\limits_{i} |u(z)-a_i|\leq Ke^{-k\,\dist(z,(\pa N_{CR_0^{1-\al/2}}(\G_0\cup \pa B_{R_0}^+)))},
    \end{equation*}
    which implies that $u$ converges exponentially fast to one of the energy wells $a_i$, outside the $O(R_0^{1-\alpha/2})$ neighborhood of $\Gamma_0\cup \pa B_{R_0}^+$.
\end{lemma}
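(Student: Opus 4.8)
Both assertions are squeezed out of the energy accounting already assembled. The localization \eqref{est: loc diffuse interface} comes from confronting the upper bound \eqref{ene upper bd: 2 sigma}, $J(u,B_{R_0}^+)\le\sigma\ch(\G_0)+CR_0^{1-\al}$, with the slicing lower bound underlying Lemma \ref{lem:lower bd}, and feeding the resulting gap into the density estimate of Lemma \ref{lem:cafarelli-cordoba}. The exponential bound is then the standard linearization near a nondegenerate well together with a barrier.

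\textbf{Localization.} First I would re-read the proof of Lemma \ref{lem:lower bd}: there the lower bound takes the shape $J(u,B_{R_0}^+)\ge\sigma f(y^*)-CR_0^{2/3}$ with $f(y)=R_0-y+\sqrt{R_0^2-4yR_0\cos(\theta_{32}-\theta_{13})+4y^2}$ in the two–segment case (and the analogous three–segment function otherwise) and $\ch(\G_0)=f(0)=\min_{y\ge0}f$. Comparing with \eqref{ene upper bd: 2 sigma} gives $f(y^*)-f(0)\le CR_0^{1-\al}$; in the extremal configuration $\theta_{32}-\theta_{13}=\tfrac{2\pi}{3}$ one has $f'(0)=0,\ f''(0)>0$, so $f(y)-f(0)\gtrsim y^2R_0^{-1}$ near $0$ and hence $y^*\le CR_0^{1-\al/2}$ (while $y^*\le CR_0^{1-\al}$ if $\theta_{32}-\theta_{13}>\tfrac{2\pi}{3}$). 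This quadratic degeneracy is precisely where the exponent $1-\tfrac\al2$ enters, and it fixes the slicing structure up to that scale. Next, suppose $z_0\in\mathcal I_{\g,R_0}$ with $D:=\dist(z_0,\G_0\cup\pa B_{R_0}^+)\gg R_0^{1-\al/2}$. Since $\pa B_{R_0}^+$ contains the diameter on $\pa\BR_+^2$, $D\le\dist(z_0,\pa\BR_+^2)$, so $B_{D/2}(z_0)\subset B_{R_0}^+\setminus\G_0$. By \eqref{uniform bound}, $\dist(u(\cdot),\{a_j\})\ge\g/2$ on $B_{\g/(4M)}(z_0)$, and Lemma \ref{lem:cafarelli-cordoba}, applied with $a_i$ the phase of the sharp partition occupying the component of $B_{R_0}^+\setminus\G_0$ containing $B_{D/2}(z_0)$, gives $\mathcal L^2\big(B_r(z_0)\cap\{|u-a_i|>\g/2\}\big)\ge cr^2$ for $\tfrac\g{4M}\le r\le D$. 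In particular, on a set of $y$'s of measure $\gtrsim D$ the horizontal slice $\g_y$ must leave the phase $a_i$ near $z_0$ and return to it, an excursion costing at least a fixed $c(\g)>0$ in $\int_{\g_y}(\tfrac12|\pa_xu|^2+W)$ \emph{in addition} to the $d$–distance between the endpoint values of $\g_y$; consequently $J(u,B_{R_0}^+)\ge\sigma\ch(\G_0)+c(\g)D-CR_0^{2/3}$, contradicting \eqref{ene upper bd: 2 sigma}. Hence $D\le CR_0^{1-\al/2}$, i.e. $\mathcal I_{\g,R_0}\subset N_{CR_0^{1-\al/2}}(\G_0\cup\pa B_{R_0}^+)$; finally $\pa B_{R_0}^+$ is dropped, since on $\pa B_{R_0}\cap\BR_+^2$ the only transition arcs are $C_{13},C_{32}$, lying within $O(1)$ of $\G_0$, while on the flat piece $u=u_0$ satisfies $\min_j|u_0(x)-a_j|\le\g$ for $|x|\ge X_0(\g)$ by \eqref{u_0 conv to a1 a2}, so the diffuse interface near $\pa B_{R_0}^+$ is confined to $O(1)$–neighborhoods of $C_{13}\cup C_{32}$ and of the origin—all within $O(1)$ of $\G_0$—and re-running the argument with those $O(1)$ sets in place of $\pa B_{R_0}^+$ yields \eqref{est: loc diffuse interface}.

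\textbf{Exponential decay.} Let $\Sigma$ be a connected component of $B_{R_0}^+\setminus N_{CR_0^{1-\al/2}}(\G_0\cup\pa B_{R_0}^+)$. By \eqref{est: loc diffuse interface}, $u(\Sigma)\subset\bigcup_j B_\g(a_j)$; for $\g$ small the balls are disjoint, so by connectedness $u(\Sigma)\subset B_\g(a_i)$ for a single $i$ (alternatively, start from $\g$ just below $r_0$ and invoke Lemma \ref{lemma: maximum principle} on shrunken subdomains). Writing $w:=u-a_i$, Hypothesis ($\mathrm{H}_1$) gives $\Delta w=\na^2W(a_i)w+R(w)$ with $|R(w)|\le C|w|^2\le C\g|w|$ on $\{|w|\le\g\}$; choosing $\g$ so that $C\g<\tfrac{c_1}{2}$ and using $\xi^T\na^2W(a_i)\xi\ge c_1|\xi|^2$, Kato's inequality yields $\Delta|w|\ge\tfrac{c_1}{2}|w|$ weakly on $\Sigma$. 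Comparing $|w|$ on each disk $B_{d(z)}(z)\subset\Sigma$, $d(z):=\dist(z,\pa\Sigma)$, against the radial supersolution of $\Delta\psi=\tfrac{c_1}{2}\psi$ equal to $\g$ on $\pa B_{d(z)}(z)$ gives $|u(z)-a_i|\le K e^{-k\,d(z)}$ for any $k<\tfrac12\sqrt{c_1/2}$ and $K=K(\g)$; since $d(z)=\dist\big(z,\pa N_{CR_0^{1-\al/2}}(\G_0\cup\pa B_{R_0}^+)\big)$, this is the asserted bound.

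\textbf{Main obstacle.} The real work is in the localization step: converting a single diffuse point far from $\G_0$ into a genuine macroscopic energy surplus. One must (i) avoid the fixed $C\delta^2$ slack of Lemma \ref{lemma: 1D energy estimate} accumulating over the $\sim R_0$ slices, which is best done by working throughout with the scale–invariant bound $\int_{\g_y}(\tfrac12|\pa_xu|^2+W)\ge\int_{\g_y}|\na_x d(u,a_i)|$ rather than slice by slice; and (ii) check that the slicing lower bound of Lemma \ref{lem:lower bd} survives excising $B_{D/2}(z_0)$ (equivalently, that the $a_i\to\text{away}\to a_i$ excursion energy genuinely adds on top of $\sigma\ch(\G_0)$), for which one reserves the gradient part of the excursion energy. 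Carrying this out uniformly over the two or three branches of $\G_0$, over $\Om_1$ and $\Om_2$, and over both slicing directions is where the bookkeeping lies.
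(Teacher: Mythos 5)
Your overall architecture (sharp upper bound versus slicing lower bound, density estimate, then linearization at a nondegenerate well for the exponential decay) is the right one, and the exponential-decay step via Kato's inequality and a radial supersolution of $\Delta\psi=\tfrac{c_1}{2}\psi$ is correct and standard. However, the central quantitative step of the localization has a genuine gap. You claim that a diffuse point $z_0$ at distance $D$ from $\G_0$ forces, on a set of horizontal slices of measure $\gtrsim D$, an ``$a_i\to$ away $\to a_i$'' excursion whose cost $c(\g)>0$ adds \emph{on top of} the transition energy of each slice, yielding a surplus linear in $D$. This is not justified and is false in general: if the diffuse interface is merely displaced or tilted (e.g.\ the transition layer sits on the vertical line $\{x=D\}$ over a range of heights instead of on the $y$-axis, reconnecting to the pinned endpoints at top and bottom), then each horizontal slice still makes exactly one transition between its endpoint phases, just at a shifted location, and contributes no excess whatsoever to the horizontal slicing bound. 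The density estimate of Lemma \ref{lem:cafarelli-cordoba} gives you a point on each slice that is far from $a_i$; it does not give you the two return points on either side that your excursion argument needs. Your own ``main obstacle'' paragraph flags exactly this issue but does not resolve it.

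The mechanism the paper actually uses (following \cite[Proposition 4.2]{alikakos2024triple} and \cite[Proposition 6.1]{geng2025uniqueness}) is that the lower bound on $\Om_1$ in Lemma \ref{lem:lower bd} spends only the horizontal deformation $|\pa_x v|^2$; a horizontal deviation of the diffuse interface by $D$ is detected by the \emph{vertical} slices, which then must register extra transitions, and the two contributions are combined by the same $\max_{\kappa}$ (Pythagorean) device already used in \eqref{ene on Om2}. This yields a surplus of order $\sqrt{R_0^2+cD^2}-R_0\sim D^2/R_0$, i.e.\ quadratic rather than linear in $D$, and comparison with the $O(R_0^{1-\al})$ gap in \eqref{ene upper bd: 2 sigma} is precisely what produces the scale $R_0^{1-\al/2}$. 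A telltale sign of the problem is that your claimed linear surplus would prove the strictly stronger containment $\mathcal{I}_{\g,R_0}\subset N_{CR_0^{1-\al}}(\G_0)$, which the lemma does not assert. (Your observation that the quadratic degeneracy of $f(y^*)$ at $y^*=0$ when $\theta_{32}-\theta_{13}=\tfrac{2\pi}{3}$ also forces the exponent $1-\tfrac{\al}{2}$ is correct, but it only localizes the junction height $y^*$, not the diffuse interface as a whole.)
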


The proof is almost identical to the proof of \cite[Proposition 6.1]{geng2025uniqueness}, with some minor modifications adapted to our half-disk domain. The idea originates from \cite[Proposition 4.2]{alikakos2024triple}, which is based on the following observation: when computing the energy on $\Omega_1$ in the proof of Lemma \ref{lem:lower bd}, see \eqref{ene on Om1}, only the horizontal deformation term $|\partial_x v|^2$ is computed. If the diffuse interface $\mathcal{I}_{\gamma,R_0}\cap \Om_1$ deviate largely from the sharp interface $\Gamma_0\cap\Om_1$, which is approximately the positive $y$-axis in Figure \ref{fig:lower bd on Om0}, this deviation produces additional energy through vertical deformation. Adding this contribution to the energy lower bound would then contradict the upper bound. We omit the detailed argument here and refer the readers to \cite[Proposition 6.1]{geng2025uniqueness}.

Now we are ready to establish the uniqueness of blow-down limit $\tilde{u}$.

\begin{proposition}\label{prop: uniquenss of blowdown}
    Let $u: \BR^2_+\ri\BR^2$ be a minimizing solution of \eqref{eq:2D allen cahn} satisfying \eqref{bdy cond}--\eqref{uniform bound}. Then the blow-down limit $\tilde{u}$ in Proposition \ref{prop: compactness of blow-down map} is independent of the choice of sequence $\{r_j\}$. That is to say, there exists a blow-down limit $\tilde{u}$, with structure characterized by Proposition \ref{prop:hat u}, such that 
    \begin{equation*}
        \lim\limits_{r\ri\infty} \|u_r-\tilde{u}\|_{L^1(K)}=0,
    \end{equation*}
    for any compact $K\Subset \overline{\BR^2_+}$.

    Moreover, there are constants $C$ and $r_0$ such that for any $r>r_0$,
    \begin{equation}\label{conv rate to blow-down limit}
        \|u_r-\tilde{u}\|_{L^1(B_1^+)}\leq Cr^{-\frac{\al}{2}}.
    \end{equation}
\end{proposition}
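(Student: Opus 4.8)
The plan is to promote the single-scale localization of Lemma~\ref{lem:loc of diffuse interface} to all scales and then telescope. Throughout I argue under $(\mathrm{H}_3')$ with $\lim_{r\to\infty}\mathcal W(r)=\sigma$, i.e. the case in which every extended blow-down limit has exactly two transition angles; the single-transition case $\lim_{r\to\infty}\mathcal W(r)=\f{\sigma}{2}$ is analogous and computationally lighter, and $(\mathrm{H}_3)$ is identical. First I would observe that Lemma~\ref{lem:loc of diffuse interface}, although stated for the particular radii $R_0$ furnished by the mean value argument, applies to \emph{every} sufficiently large $r$: given $r$ take $\varepsilon\sim r^{-\al}$, which produces an admissible $R_0\in(r/2,r)$, and the conclusion on $B_{R_0}^+$ transfers to $B_r^+$ after enlarging $C$. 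Hence for each large $r$ there is a sharp configuration $\G_0^{(r)}$ — a pair of rays from the origin, or a tripod — with $\mathcal I_{\g,r}\subset N_{Cr^{1-\al/2}}(\G_0^{(r)})$, and $u$ converging exponentially to an energy well outside the $O(r^{1-\al/2})$-neighborhood of $\G_0^{(r)}\cup\pa B_r^+$. Combined with the $L^1$-closeness $\|u_r-\tilde u^{(r)}\|_{L^1(B_2^+)}\le\varepsilon^2/2$ of $u_r$ to a homogeneous map whose interface is two rays through the origin, this forces $\G_0^{(r)}$, near $\pa B_r$, to agree up to $O(r^{1-\al/2})$ with two rays $l_{\beta_1(r)},l_{\beta_2(r)}$ with $\beta_2(r)-\beta_1(r)\ge\f{2\pi}{3}-o(1)$; in particular, were $\G_0^{(r)}$ a tripod its vertex would lie within $O(r^{1-\al/2})$ of the origin, since otherwise the bisecting branch would place the diffuse interface near the origin-direction $\f{\beta_1(r)+\beta_2(r)}{2}$, contradicting the two-ray localization at scale $\sim r$. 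The intermediate phase $a_3$ is already known to be scale-independent.

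Next comes the telescoping estimate on the angles $\beta_i(r)$. Fix $r$ large and $r'\in[r,2r]$. Since $u$ runs from near $a_1$ through near $a_3$ to near $a_2$ along $\pa B_\rho$, one has $\mathcal I_{\g,r}\cap\pa B_\rho\ne\emptyset$ for $\rho$ in a full-measure subset of $[r/2,r]$; a point $z$ there lies simultaneously in $N_{Cr^{1-\al/2}}(\G_0^{(r)})$ and $N_{C(r')^{1-\al/2}}(\G_0^{(r')})$. As $\rho\gtrsim r$ and $\beta_2-\beta_1$ is bounded away from $0$, in each configuration the branch nearest $z$ is one of the two outer rays, and the two angular bands overlap; matching outer rays by label yields $|\beta_i(r')-\beta_i(r)|\le Cr^{-\al/2}$ for $i=1,2$. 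Iterating over dyadic blocks $[2^lr_0,2^{l+1}r_0]$ and summing $\sum_l(2^lr_0)^{-\al/2}$, the limits $\al_i:=\lim_{r\to\infty}\beta_i(r)$ exist and $|\beta_i(r)-\al_i|\le Cr^{-\al/2}$, while $\al_2-\al_1\ge\f{2\pi}{3}$ in the limit. Hence the homogeneous map $\tilde u$ determined by $(\al_1,\al_2,a_3)$ is independent of the chosen sequence $\{r_j\}$ and has the structure of Proposition~\ref{prop:hat u}; this is the uniqueness assertion.

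For the rate, fix large $r$ and split $B_1^+=S_r\cup(B_1^+\setminus S_r)$ with $S_r:=N_{Cr^{-\al/2}}\big((l_{\al_1}\cup l_{\al_2})\cup\pa B_1^+\big)$. On $S_r$, the bound $|u_r-\tilde u|\le M+\max_i|a_i|$ with $|S_r|\le Cr^{-\al/2}$ contributes $O(r^{-\al/2})$. On $B_1^+\setminus S_r$, since $|\beta_i(r)-\al_i|\le Cr^{-\al/2}$ this set lies outside the rescaled $O(r^{-\al/2})$-neighborhood of $\G_0^{(r)}\cup\pa B_r^+$, so Lemma~\ref{lem:loc of diffuse interface} gives $\min_i|u_r(z)-a_i|\le Ke^{-kr\,\dist(z,S_r)}$; on each connected component this pins $u_r$ near a single well, which the $L^1_{loc}$ convergence of Proposition~\ref{prop: compactness of blow-down map} identifies with the correct value $\tilde u(z)$ (the thin layer near $\pa S_r$ where the exponential bound is not yet small being absorbed into the $S_r$ term), so $\int_{B_1^+\setminus S_r}|u_r-\tilde u|\,dz\le\int_{B_1^+}Ke^{-kr\,\dist(z,S_r)}\,dz\le C/r$. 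Since $\al<1$ the total is $O(r^{-\al/2})$, which is \eqref{conv rate to blow-down limit}. For arbitrary compact $K\Subset\overline{\BR_+^2}$, pick $\rho$ with $K\subset\overline{B_\rho^+}$; by homogeneity of $\tilde u$, $\|u_r-\tilde u\|_{L^1(B_\rho^+)}=\rho^2\|u_{\rho r}-\tilde u\|_{L^1(B_1^+)}\le C\rho^2(\rho r)^{-\al/2}\to0$.

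The step I expect to be the main obstacle is the dyadic comparison of sharp interfaces: making the branch-matching across scales unambiguous (which is exactly where $\beta_2-\beta_1\ge\f{2\pi}{3}-o(1)$ is used), excluding a persistent tripod whose vertex escapes to scale $\sim r$, and keeping every constant uniform in $r$ so that the geometric sum closes with exponent precisely $\al/2$. Everything else is bookkeeping on top of Lemma~\ref{lem:loc of diffuse interface} and the exponential decay it provides, closely paralleling \cite[Proposition~6.1]{geng2025uniqueness}.
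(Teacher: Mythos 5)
Your proposal is correct and follows essentially the same route as the paper: it upgrades Lemma \ref{lem:loc of diffuse interface} to every large scale, compares the sharp-interface angles at consecutive dyadic scales to get a Cauchy sequence with increments $O(R_j^{-\alpha/2})$ (the step the paper delegates to \cite[Section~8]{geng2025uniqueness}), sums the geometric series, and then derives \eqref{conv rate to blow-down limit} from the interface localization plus exponential decay off the interface. The only difference is presentational — you carry out the comparison for all $r'\in[r,2r]$ rather than along a preselected sequence $R_j\in(2^jR_0,2^{j+1}R_0)$ — and your treatment of the tripod vertex and of the $L^1$ split $B_1^+=S_r\cup(B_1^+\setminus S_r)$ matches the paper's intent.
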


\begin{proof}
    We present the proof assuming ($\mathrm{H}_3'$). We fix $R_0$ such that \eqref{energy est on bdy R0}, \eqref{a3 on bdy R0}, \eqref{a1 a2 on bdy R0} and \eqref{ene upper bd: 2 sigma} hold. Rescale the approximated sharp interface $\Gamma_0\in B_{R_0}^+$ to the unit half disk $B_1^+$ by defining
    \begin{equation*}
        \tilde{\Gamma}_0:=\{z\in B_1^+: R_0 z\in \Gamma_0 \}. 
    \end{equation*}

From Lemma \ref{lem:loc of diffuse interface}, it follows that $\tilde{\Gamma}_0$ splits $B_1^+$ into three subdomains $\Om^0_1,\,\Om^0_2,\, \Om^0_3$ such that for any $i\in\{1,2,3\}$ 
\begin{equation*}
    |u_{R_0}(z)-a_i|\leq \g, \quad \forall z\in \Om^0_i \text{ satisfying }\dist(z,\pa\Om^0_i)\leq CR_0^{-\f{\al}{2}}.
\end{equation*}
Let 
$$
U_0:= \sum\limits_{i=1}^3 a_i\cdot \chi_{\Om^0_i},
$$
Lemma \ref{lem:loc of diffuse interface} further implies 
\begin{equation*}
    \|u_{R_0}-U_0\|_{L^1(B_1^+)}\leq CR_0^{-\f{\al}{2}}.
\end{equation*}

Now we can select a sequence of radii $\{R_j\}_{j=1}^\infty$ such that $R_j\in (2^jR_0,2^{j+1}R_0)$ and the corresponding rescaled map $u_{R_j}$ satisfies
\begin{equation*}
    \|u_{R_j}-U_j\|_{L^1(B_1^+)}\leq CR_j^{-\f{\al}{2}},
\end{equation*}
for some $U_j:= \sum\limits_{i=1}^3 a_i\cdot \chi_{\Om^j_i}$ with $\{\Om^j_i\}_{i=1}^3$ being a three-partition of $B_1^+$ determined either by union of three rays forming $\f{2\pi}{3}$ angles pairwise, or two rays meeting at the origin with an opening angle greater than or equal to $\f{2\pi}{3}$. Let $\tilde{\Gamma}_j$ denote the interface associated with $U_j$. $u_{R_j}(z)$ is $\g$-close to one of phases $a_1,\,a_2,\,a_3$ if $\dist(z,\tilde{\Gamma}_0)>CR_j^{-\f{\al}{2}}$.  

For any $j\geq 0$, let $\theta_{13}^j,\ \theta_{32}^j\in [0,\pi]$  denote the directions of the $a_1$-$a_3$ interface and the $a_3$-$a_2$ interface of $\tilde{\Gamma}_j$, respectively.  since $u_{R_j}$ and $u_{R_{j+1}}$ are rescalings of the same function $u$ at comparable scales ($\f14<\f{R_j}{R_{j+1}}<1$), we argue as in \cite[Section 8]{geng2025uniqueness} to obtain that 
\begin{equation*}
    |\theta^j_{13}-\theta^{j+1}_{13}|\leq CR_j^{-\f{\al}{2}},\quad |\theta^j_{32}-\theta^{j+1}_{32}|\leq CR_j^{-\f{\al}{2}}
\end{equation*}
which implies that $\{\theta^j_{13}\}_{j=1}^{\infty}$ ($\{\theta^j_{32}\}_{j=1}^{\infty}$) forms a Cauchy sequence and therefore this is $\tilde{\theta}_{13}$ ($\tilde{\theta}_{32}$) such that 
\begin{equation*}
    \lim\limits_{j\to\infty}\theta^j_{13}=\tilde{\theta}_{13},\quad  \lim\limits_{j\to\infty}\theta^j_{32}=\tilde{\theta}_{32}.
\end{equation*}
Moreover we have the following estimate of the convergence rate, 
\begin{equation*}
    |\theta^j_{13}-\tilde{\theta}_{13}|\leq CR_j^{-\frac{\al}{2}},\quad |\theta^j_{32}-\tilde{\theta}_{32}|\leq CR_j^{-\frac{\al}{2}}.
\end{equation*}
Combining the convergence of the $a_1$--$a_3$ and $a_3$--$a_2$ interfaces with the complete characterization of blow-down limits in Proposition~\ref{prop:hat u}, one can deduce via a straightforward geometric argument the existence of a unique blow-down limit $\tilde u$, whose discontinuity angles are $\tilde{\theta}_{13}$ and $\tilde{\theta}_{32}$, such that
\begin{equation*}
    \|U_j - \tilde u\|_{L^1(B_1^+)} \le C R_j^{-\frac{\alpha}{2}} .
\end{equation*}
The convergence rate \eqref{conv rate to blow-down limit} then follows immediately.
\end{proof}

\begin{corol}\label{corol: diffuse interface size}
    By Lemma \ref{lem:loc of diffuse interface}, Proposition \ref{prop: uniquenss of blowdown}, and an elementary argument, cf. \cite[Lemma 3.2]{geng2025rigidity}, one can derive that 
    the diffuse interface $\mathcal{D}_\g$ defined in \eqref{def:diff intef} is contained in a $C(\g,u,W) R^{1-\f{\al}{2}}$ neighborhood of the sharp interface $l_{\al_1}\cup l_{\al_2}$ within $B_R^+$, for sufficiently large $R$. Furthermore, we have the following estimates for sufficiently large $r$:
    \begin{equation}\label{exp decay, 1-al/2}
        \begin{split}
        |u(r,\theta)-a_1|\leq Ke^{-kr\cdot\min\{|\theta|, |\al_1-Cr^{-\al/2}-\theta|\}},\quad  &  \theta\in(0,\al_1-Cr^{-\al/2}),\\
        |u(r,\theta)-a_3|\leq Ke^{-kr\cdot\min\{|\theta-\al_1-Cr^{-\al/2}|, |\al_2-Cr^{-\al/2}-\theta|\}},\quad  &  \theta\in(\al_1+Cr^{-\al/2},\al_2-Cr^{-\al/2}),\\
        |u(r,\theta)-a_2|\leq Ke^{-kr\cdot\min\{|\theta-\al_2-Cr^{-\al/2}|, |\pi-\theta|\}},\quad  &  \theta\in(\al_2+Cr^{-\al/2},\pi).
        \end{split}
    \end{equation}
\end{corol}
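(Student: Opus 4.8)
The plan is to upgrade the single--scale conclusions of Lemma~\ref{lem:loc of diffuse interface} to all large scales by means of Proposition~\ref{prop: uniquenss of blowdown}, and then to handle separately the region adjacent to the flat boundary. Write $\G_0^{(R)}$ for the approximate sharp interface constructed at scale $R$ in Section~\ref{sec:uniqueness}; the proof of Proposition~\ref{prop: uniquenss of blowdown} shows that, for every sufficiently large (admissible) $R$, the two branch directions of $\G_0^{(R)}$ lie within $CR^{-\al/2}$ of $\al_1,\al_2$, and in the borderline case $\al_2-\al_1=\tfrac{2\pi}{3}$ the interior triple junction $G_0^{(R)}$ satisfies $|G_0^{(R)}|\le CR^{1-\al/2}$ (its distance to the origin being $\asymp R$ times the opening deficit); these estimates first hold along the dyadic sequence used there and then, by comparability of consecutive scales, at every large $R$, with $C=C(u,\g,W)$. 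In particular $\G_0^{(R)}\cap B_R^+\subset N_{CR^{1-\al/2}}(l_{\al_1}\cup l_{\al_2})$. For the localization of $\mathcal D_\g$ (see \eqref{def:diff intef}): given $z\in\mathcal D_\g$ with $|z|=\rho$ large, pick an admissible $R$ with $2\rho\le R\le 4\rho$; then $z\in B_R^+$ and, after replacing $\g$ by $\g/2$, $z\in\mathcal I_{\g,R}$, so the first part of Lemma~\ref{lem:loc of diffuse interface} gives $\dist(z,\G_0^{(R)})\le CR^{1-\al/2}$, whence $\dist(z,l_{\al_1}\cup l_{\al_2})\le 2CR^{1-\al/2}\le C'\rho^{1-\al/2}$. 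Since this is trivial when $|z|$ is bounded, $\mathcal D_\g\cap B_R^+\subset N_{C'R^{1-\al/2}}(l_{\al_1}\cup l_{\al_2})$ for $R$ large; this is the elementary covering argument of \cite[Lemma~3.2]{geng2025rigidity}.

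\emph{Exponential decay away from the flat boundary.} Consider the $a_1$--sector (the $a_2$--sector is symmetric, the interior $a_3$--sector needs only this step). Let $z=(r\cos\theta,r\sin\theta)$ with $\theta\in(0,\al_1-Cr^{-\al/2})$ and, for now, $r\sin\theta\ge Cr^{1-\al/2}$. Apply the second part of Lemma~\ref{lem:loc of diffuse interface} at an admissible $R$ with $2r\le R\le 4r$: since $\dist(z,\pa B_R^+)=\min\{R-r,\,r\sin\theta\}\gg R^{1-\al/2}$ and, by the previous paragraph, $\dist(z,\G_0^{(R)})\ge\dist(z,l_{\al_1})-CR^{1-\al/2}$, the distance from $z$ to $\pa N_{CR^{1-\al/2}}(\G_0^{(R)}\cup\pa B_R^+)$ is at least $\min\{r\sin\theta,\ \dist(z,l_{\al_1})\}-CR^{1-\al/2}$. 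Comparing these Euclidean distances with the angular quantities---using $r\sin\theta\ge c\,r\theta$ and $\dist(z,l_{\al_1})\ge c\,r(\al_1-\theta)$ on the relevant ranges, $R\le 4r$, and $\theta<\al_1-Cr^{-\al/2}$---this is $\ge c\,r\min\{\theta,\ \al_1-Cr^{-\al/2}-\theta\}$ after enlarging $C$ and shrinking $c$, so Lemma~\ref{lem:loc of diffuse interface} yields $\min_i|u(z)-a_i|\le Ke^{-kr\min\{\theta,\al_1-Cr^{-\al/2}-\theta\}}$. Since $z$ lies outside the $O(r^{1-\al/2})$--tube around $l_{\al_1}\cup l_{\al_2}$, the minimizing index is locally constant on $\{\min_i|u-a_i|<\g\}$ and, by the convergence $u_r\to\tilde u$ in $L^1$, equals $1$ throughout the connected part of this set meeting the $a_1$--sector; this is the claimed estimate.

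\emph{Near the flat boundary: the main obstacle.} It remains to treat the part of the $a_1$--sector with $r\sin\theta\le Cr^{1-\al/2}$, where $\theta$ is small and $z$ lies within $O(r^{1-\al/2})$ of the positive $x$--axis. Here Lemma~\ref{lem:loc of diffuse interface} gives nothing, since for \emph{every} scale $R\ge r$ the tube $N_{CR^{1-\al/2}}(\pa B_R^+)$ engulfs such $z$; this is the genuinely delicate step, and one must import the boundary data of $u_0$ in place of the interior machinery. From \eqref{u0' control}, $|u_0(x)-a_1|\le\int_x^\infty|u_0'|\le x^{-1}\!\int_{-\infty}^\infty|t||u_0'(t)|\,dt\le C_0/x$, which with $|\na u|\le M$ from \eqref{uniform bound} gives $|u(x,y)-a_1|\le My+C_0/x$; hence $u$ stays within the ball $\{|v-a_1|<r_0\}$ of Lemma~\ref{lemma: maximum principle} on a fixed strip $\{x\ge X_0,\,0<y<h_0\}$ ($X_0,h_0=O(1)$), while the previous step gives $|u-a_1|\le Ke^{-kCx^{1-\al/2}}$ on the curve $y=C_1x^{1-\al/2}$, $x\ge X_0$ (for $C_1\ge C$). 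Combining these on a suitable vertical--side--free subdomain of the $a_1$--sector adjacent to the positive $x$--axis (a ``horn'' bounded below by $y=h_0$ and above by a monotone curve rising from $h_0$ at $x=X_0$ to $C_1x^{1-\al/2}$), and using that $\phi:=|u-a_1|$ obeys $\Delta\phi\ge c_1\phi$ wherever $\phi\le\delta_1(W)$---by Kato's inequality and the non--degeneracy of $W$ at $a_1$---a barrier argument propagates the boundary closeness inward and yields $|u(z)-a_1|\le Ke^{-kr\sin\theta}\le Ke^{-kr\min\{\theta,\al_1-Cr^{-\al/2}-\theta\}}$, the minimum being $\theta$ in this range. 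This completes \eqref{exp decay, 1-al/2}; the single interior interface case $\al_1=\al_2$ is identical and simpler. The same mechanism explains why, when the sharp interface coincides with the positive or negative $x$--axis ($\al_1=0$ or $\al_2=\pi$), the localization of $\mathcal D_\g$ above cannot be improved below $O(R^{1-\al/2})$---in contrast with the $O(1)$ bound for interior interfaces in Theorem~\ref{main thm: refined behavior near interface}---consistently with the remark following that theorem. Uniformity of $C,k,K$ is automatic, as they depend only on $u,\g,W$.
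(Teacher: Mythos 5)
Your overall architecture is sound and matches what the paper intends by its citation of Lemma \ref{lem:loc of diffuse interface}, Proposition \ref{prop: uniquenss of blowdown}, and ``an elementary argument'': propagate the single-scale localization of Lemma \ref{lem:loc of diffuse interface} to all scales via the convergence rate of the interface angles in Proposition \ref{prop: uniquenss of blowdown} (including the $O(R^{1-\al/2})$ bound on the possible interior Steiner point of $\G_0^{(R)}$), then cover an arbitrary point by a comparable admissible scale, and finally convert the localization into exponential decay via the linearized equation. Where you diverge is in the treatment of the region near the flat boundary. You import the quantitative decay $|u_0(x)-a_1|\le C_0/x$ from \eqref{u0' control} together with the gradient bound \eqref{uniform bound}, and then run a global barrier in a ``horn'' domain. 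This works, but it is more machinery than needed: the first assertion of the corollary (localization of $\mathcal D_{\delta_1}$ near $l_{\al_1}\cup l_{\al_2}$ only, coming from \eqref{est: loc diffuse interface}, which does \emph{not} include $\pa B_{R_0}^+$ in the neighborhood) already gives $|u-a_1|\le\delta_1$ on the whole sub-sector between the $x$-axis and the tube around $l_{\al_1}$; once the index is identified (this is the only place the boundary data is genuinely needed, and only qualitatively, since $u_0\to a_1$ pins down the component's phase), the local comparison $\Delta|u-a_1|\ge c_1|u-a_1|$ on the disk $B_{\rho}(z)$ with $\rho\simeq\min\{y,\dist(z,\text{tube})\}$ — a disk that stays in $\BR_+^2$ — yields $|u(z)-a_1|\le \delta_1 e^{-k\rho}$ directly, which is exactly \eqref{exp decay, 1-al/2}. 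So your ``genuinely delicate step'' is delicate only because you insist on using the second assertion of Lemma \ref{lem:loc of diffuse interface}, whose neighborhood contains $\pa B_{R_0}^+$; the first assertion plus a purely interior disk estimate avoids the issue.

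One point you should tighten: your split between the two regimes is at $r\sin\theta\gtrless Cr^{1-\al/2}$, while the neighborhood in Lemma \ref{lem:loc of diffuse interface} has width $CR^{1-\al/2}$ with $R\in[2\rho,4\rho]$. In the band $Cr^{1-\al/2}\le r\sin\theta\le 2CR^{1-\al/2}$ the point either lies inside $N_{CR^{1-\al/2}}(\pa B_R^+)$ (so the second part of Lemma \ref{lem:loc of diffuse interface} is inapplicable) or has distance to $\pa N$ far smaller than $c\,r\theta$, so the inequality ``$\dist(z,\pa N)\ge c\,r\min\{\theta,\al_1-Cr^{-\al/2}-\theta\}$'' fails there no matter how you shrink $c$. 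The fix is available inside your own construction — take the horn's upper curve $y=C_1x^{1-\al/2}$ with $C_1$ a large multiple of $C$ and apply the first step only where $y\ge \tfrac12 C_1 x^{1-\al/2}$ — but as written the two regimes do not overlap where they need to.
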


\begin{proof}[Proof of Theorem \ref{main thm: cls of blow down}]
    The statement in Theorem \ref{main thm: cls of blow down} follows immediately from Proposition \ref{prop: compactness of blow-down map}, Proposition \ref{prop:hat u} and Proposition \ref{prop: uniquenss of blowdown}. 
\end{proof}

\section{Asymptotic behaviors near the sharp interface}\label{sec:sharp interface}

This section is devoted to the proof of Theorem \ref{main thm: refined behavior near interface}, which establishes the asymptotic behavior of $u$ near the interior sharp interface. Similarly as in Section \ref{sec:uniqueness}, we present the proof under Hypothesis ($\mathrm{H}_3'$). The proof can be easily generalized to the ($\mathrm{H}_3$) case. 

Let $u$ be a minimizing solution of \ref{eq:2D allen cahn} satisfying \eqref{bdy cond}--\eqref{uniform bound}, with $\tilde{u}(r,\theta)=\tilde{u}(\theta)$ being the unique blow-down limit of $u$ at infinity. In particular, we assume $\tilde{u}$ has the following structure:
\begin{equation}\label{particular hatu}
    \tilde{u}(\theta)=\begin{cases}
    a_1, & \theta\in(0,\al_1)\\
    a_3, & \theta\in(\al_1,\al_2),\\
    a_2, & \theta\in(\al_2,\pi),
    \end{cases}
    \quad\text{ for some }0<\al_1< \al_2< \pi,\ \al_2-\al_1\geq \f23\pi.
\end{equation}
For the extended map $\hat{u}$ defined in \eqref{def of hatu}, the two discontinuity angles are $\alpha_1$ and $\alpha_2$, respectively. In the two-phase case where $\alpha_1 = \alpha_2 \in (0, \pi)$, the proof becomes simpler. Indeed, our argument will focus on a sector containing only a single interface $l_{\alpha_i}$, along which we will establish the refined profile.

By Corollary \ref{corol: diffuse interface size}, there exist $K=K(u,W),\,k=k(u,W)$ such that 
\begin{equation}\label{exp decay on th0/2, (th0+pi)/2}
\begin{split}
    &|u(r, \f{\al_1}{2})-a_1|\leq Ke^{-kr},\\
    &|u(r,\f{\al_1+\al_2}{2})-a_3|\leq Ke^{-kr},\\
    &|u(r,\f{\al_2+\pi}{2})-a_2|\leq Ke^{-kr}.
\end{split}
\end{equation}

The following lemma provides refined energy lower bound and upper bound on $B_R^+$.

\begin{lemma}\label{lem:sharp ene bd}
    There is a constant $C=C(u,W)$ such that for every $R>0$, 
    \begin{equation}\label{ineq:sharp bd}
        2\sigma R-C\leq J(u,B_R^+)\leq 2\sigma R+C.
    \end{equation}
\end{lemma}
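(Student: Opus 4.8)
The plan is to prove the two inequalities separately: the lower bound by a slicing argument on half‑circles, and the upper bound by comparison on a slightly larger disk whose energy estimate is then transferred back to $B_R^+$ using the lower bound. The ingredients are the $1$D energy estimate (Lemma \ref{lemma: 1D energy estimate}), the localization and exponential decay of $u$ away from the sharp interface (Corollary \ref{corol: diffuse interface size}), the rough upper bound (Lemma \ref{lem:rough ene upper bdd}), the competitor estimate \eqref{ene upper bd: 2 sigma}, and the decay hypotheses \eqref{u0 finite energy}--\eqref{u_0 conv to a1 a2} on $u_0$.

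For the lower bound, fix a large $\rho$ and take $s\geq\rho$. Parametrizing $\pa B_s^+\cap\BR_+^2$ by arclength turns $u|_{\pa B_s^+}$ into a curve $\gamma_s$ on $[0,\pi s]$ with $\gamma_s(0)=u_0(s)$, $\gamma_s(\pi s)=u_0(-s)$, and, by the middle estimate in \eqref{exp decay on th0/2, (th0+pi)/2}, with $\gamma_s\big(s\tfrac{\al_1+\al_2}{2}\big)$ within $Ke^{-ks}$ of $a_3$; for $\rho$ large $u_0(\pm s)$ lies in the required neighborhood of $a_1,a_2$. Using $\int_{\pa B_s^+}\big(\tfrac12|\pa_T u|^2+W(u)\big)\,d\ch=\int_0^{\pi s}\big(\tfrac12|\gamma_s'|^2+W(\gamma_s)\big)$, splitting at $s\tfrac{\al_1+\al_2}{2}$, and applying on each piece the chain $\int\big(\tfrac12|w'|^2+W(w)\big)\geq\int\sqrt{2W(w)}\,|w'|\geq d(w(a),w(b))$ together with the triangle inequality for $d$ and the bound $d(a_i,p)\leq C|p-a_i|^2$ for $p$ near $a_i$ (valid since $W$ vanishes to second order at $a_i$), one obtains
\begin{equation*}
\int_{\pa B_s^+\cap\BR_+^2}\Big(\tfrac12|\pa_T u|^2+W(u)\Big)\,d\ch\geq 2\sigma-C\big(|u_0(s)-a_1|^2+|u_0(-s)-a_2|^2+Ke^{-2ks}\big),\qquad s\geq\rho.
\end{equation*}
Integrating over $s\in[\rho,R]$ and using \eqref{u_0 conv to a1 a2} to bound $\int_\rho^\infty\big(|u_0(s)-a_1|^2+|u_0(-s)-a_2|^2+Ke^{-2ks}\big)\,ds<\infty$ gives $J(u,B_R^+)\geq 2\sigma R-C$; the same computation on an annulus gives $J(u,A_{R_1,R_2}^+)\geq 2\sigma(R_2-R_1)-C$ with $C$ independent of $R_1,R_2$. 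For $R\leq\rho$ the bounds are trivial after enlarging $C$, using $J(u,B_\rho^+)\leq C\rho$.

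For the upper bound, first select a favorable radius. Averaging the slice identity over $s\in[R_0,R_0+L]$ and combining the lower bound just proved with the competitor estimate $J(u,B_s^+)\leq 2\sigma s+Cs^{1-\al}$ (from the construction of Section \ref{sec:classification}, cf.\ \eqref{ene upper bd: 2 sigma}, which extends to all radii), one finds $R\in[R_0,R_0+L]$ with $\int_{\pa B_R^+}\big(\tfrac12|\na u|^2+W(u)\big)\,d\ch\leq 2\sigma+\eta$, $\eta=C(R_0^{1-\al}+1)/L$; choosing $L$ a suitable power of $R_0$ makes $\eta$ as small as needed, and since then $\int_{\pa B_R^+}W(u)\,d\ch=O(1)$, Corollary \ref{corol: diffuse interface size} forces $\ch(\mathcal{D}_\g\cap\pa B_R^+)=O(1)$. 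Now build a competitor $v$ on $B_R^+$ equal to $u$ in a neighborhood of $\pa B_R^+$ (this is required by Definition \ref{def: minimizing sol}, in particular near $\pa\BR_+^2$; but there $J(u,\cdot)=O(1)$, because \eqref{u0' control} gives $|u_0(x)-a_1|=O(1/|x|)$, so by the variational maximum principle and interior elliptic estimates $|u-a_1|$ and $|\na u|$ decay like $1/|x|$ in the strip near the positive $x$-axis, and symmetrically near the negative part). Inside, $v$ is two heteroclinic tubes of fixed transverse width modeled on $U_{13},U_{32}$ along the two rays carrying the interface, glued in an $O(1)$ triple-junction core at the origin — the matching to the constant phases costing only $O(1)$ by the exponential decay of $U_{ij}$ — while in a transition layer of radial width $\sim R^{1-\al/2}$ just inside $\pa B_R$ one interpolates between $u|_{\pa B_R^+}$ and this radial configuration, aligning its two transitions with the $O(1)$-thin interface arcs of $u$ on $\pa B_R^+$. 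The point is that the slice energy at $\pa B_R$ is within $\eta$ of $2\sigma$ and the ``fringe'' on $\pa B_R^+$ where $u$ is only $\g$-close to a well has arclength $O(R^{1-\al/2})$ by Corollary \ref{corol: diffuse interface size}, so it is absorbed into a layer of comparable width and the interpolation adds only $O(1)$ beyond $2\sigma$ per unit radius; hence the layer contributes $2\sigma R^{1-\al/2}+O(1)$ and the bulk $2\sigma(R-R^{1-\al/2})+O(1)$, giving $J(u,B_R^+)\leq 2\sigma R+C$. Finally, since $R\geq R_0$,
\begin{equation*}
J(u,B_{R_0}^+)=J(u,B_R^+)-J(u,A_{R_0,R}^+)\leq (2\sigma R+C)-\big(2\sigma(R-R_0)-C\big)=2\sigma R_0+C.
\end{equation*}

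The main difficulty is the upper bound, that is, improving the error from the $O(R^{1-\al})$ of Section \ref{sec:classification} to $O(1)$. Two points need care: (i) one must land on a radius where the boundary slice energy is not merely bounded but actually close to the minimal $2\sigma$, since otherwise interpolating across a layer of width $\sim R^{1-\al/2}$ accumulates an $O(R^{1-\al/2})$ excess — this is what the averaging step secures; (ii) on $\pa B_R^+$ the set where $u$ fails to be exponentially close to a well is controlled by Corollary \ref{corol: diffuse interface size} only to have arclength $O(R^{1-\al/2})$, not $O(1)$, so this fringe must be absorbed into a comparably wide transition layer rather than a fixed-width one. Modulo these two adjustments the construction is that of \cite[Proposition~3.3]{ss2024} and \cite{geng2025rigidity}, made quantitatively sharp via the exponential decay and the precise interface location obtained in Section \ref{sec:uniqueness}. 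The lower bound, by contrast, is routine once Corollary \ref{corol: diffuse interface size} and Lemma \ref{lemma: 1D energy estimate} are available.
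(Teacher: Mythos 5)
Your lower bound is correct and essentially the paper's argument (the paper slices along $\pa B_r\cap\BR_+^2$ using the three rays $l_{\al_1/2}$, $l_{(\al_1+\al_2)/2}$, $l_{(\al_2+\pi)/2}$ from \eqref{exp decay on th0/2, (th0+pi)/2}, whereas you anchor the two transitions at the boundary points $u_0(\pm s)$ and the middle ray and control the error via \eqref{u_0 conv to a1 a2}; both work). The upper bound, however, has two genuine gaps. First, you make the competitor equal to $u$ in a neighborhood of the flat boundary and justify this by claiming $|u-a_1|,|\na u|=O(1/|x|)$ in a strip along the positive $x$-axis ``by the variational maximum principle and interior elliptic estimates.'' The maximum principle (Lemma \ref{lemma: maximum principle}) requires smallness of $|u-a_1|$ on the \emph{entire} boundary of the test region; on the top edge $\{y=c\}$ of any fixed-height strip the only available bound is $|u-a_1|\le Ke^{-kc}$ from \eqref{exp decay, 1-al/2}, a constant independent of $x$, so the maximum principle yields only a constant bound in the strip, not $1/|x|$ decay, and hence no $O(1)$ bound on $J(u,\text{strip})$ uniformly in $R$ (this bound is true a posteriori, but only as a consequence of the lemma you are proving). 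The paper avoids this entirely: its competitor is not equal to $u$ near $\{y=0\}$ but is an explicit vertical interpolation between $u_0$ and the constant phases in the region $\Om_2$, whose energy is $O(1)$ using only the hypotheses \eqref{u0 finite energy}--\eqref{u_0 conv to a1 a2} on $u_0$; the competitor is admissible because it matches the trace $u_0=u$ on the flat part.

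Second, your construction places the heteroclinic tubes along the rays $l_{\al_1},l_{\al_2}$ and then realigns them, over a layer of radial width $\sim R^{1-\al/2}$, with the transition arcs of $u$ on $\pa B_R$, which at this stage are located only to within an angular offset $O(R^{-\al/2})$ of $\al_i$ (Corollary \ref{corol: diffuse interface size}); Lemma \ref{lem:exp decay: 1/2} improving this to $O(R^{1/2})$ comes \emph{after} and uses the present lemma. The realignment forces the interface in the layer to traverse a transversal displacement of order $R\cdot R^{-\al/2}=R^{1-\al/2}$ over a radial distance $R^{1-\al/2}$, adding $(\sqrt{1+C^2}-1)\,R^{1-\al/2}$ of interface length and hence an $O(R^{1-\al/2})$ energy excess, not the $O(1)$ you claim. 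The fix is the paper's: after selecting a radius $\bar R$ where the slice energy is within $\e$ of $2\sigma$ (so the set where $u$ is $\delta$-far from every well has arclength $O(1)$, and on the good arcs $\int|u-a_i|^2\,d\ch\le C\int W(u)\,d\ch=O(1)$), use a width-$1$ interpolation layer and run each heteroclinic tube along the straight segment from the origin to the actual transition point on $\pa B_{\bar R-1}$ — a radial segment has length $\bar R-1$ regardless of its angle, so no realignment with $l_{\al_i}$ is needed and the total interface length is $2(\bar R-1)$ exactly.
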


\begin{proof}
First we apply the exponential decay result on the rays $l_{\f{\al_1}{2}}$, $l_{\f{\al_1+\al_2}{2}}$ and $l_{\f{\al_2+\pi}{2}}$ to obtain 
\begin{align*}
    & J(u,B_R^+)\\
    \geq & \int_0^R \int_{\pa B_r} \left(\f12|\pa_T u|^2+W(u)\right)\,d\ch\,dr\\
    \geq & \int_0^R \left(2\sigma- C|u(r,\f{\al_1}{2})-a_1|^2- C|u(r,\f{\al_1+\al_2}{2})-a_3|^2- C|u(r,\f{\al_2+\pi}{2})-a_2|^2\right)\,dr\\
    =&2\sigma R- C\int_0^R Ke^{-2kr}\,dr
    \geq 2\sigma R-C(u,W),
\end{align*}
which yields the lower bound.

For the upper bound, we construct an explicit energy competitor in a manner analogous to the construction in the proof of Proposition \ref{prop: ene equipartition}. The idea is to build one-dimensional heteroclinic connections in small neighborhoods of $l_{\al_1}$ and $l_{\al_2}$, and then fill in the boundary layers by linear interpolation to match the prescribed boundary data. The resulting estimates are essentially the same as before, although in the present setting the construction is actually simpler due to the clearer structure of $\hat{u}$ thanks to the established convergence results. The energy within interpolation layers can be controlled by a constant $C(u,W)$ and the upper bound follows,
\begin{equation*}
    J(u,B_R^+)\leq 2\sigma R+C(u,W).
\end{equation*}
We leave the detailed calculation in the Appendix \ref{apx: sharp upper bd}.
\end{proof}

\begin{rmk}
In the cases $\alpha_1 = 0$ or $\alpha_2 = \pi$, the lower bound remains valid, but our present construction does not allow the next-order term in the upper bound to be uniformly bounded. The best estimate available is $O(R^{\alpha})$ for arbitrarily small $\alpha > 0$, which is not sufficient for the results that follow.
\end{rmk}

With Lemma \ref{lem:sharp ene bd}, we are able to restrict the diffuse interface $\mathcal{D}_\g$ within a $O(R^{\f12})$ neighborhood of the sharp interface.

\begin{lemma}\label{lem:exp decay: 1/2}
    For each $\gamma > 0$, there exists a constant $C = C(\gamma, u, W)$ such that the diffuse interface $\mathcal{D}_\gamma \cap B_R^+$ is contained within a $CR^{1/2}$–neighborhood of the sharp interface $l_{\al_1} \cup\,l_{\al_2}$. In addition, the estimate in \eqref{exp decay, 1-al/2} can be refined by replacing $\alpha$ with $1$. 
\end{lemma}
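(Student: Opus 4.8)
The plan is to bootstrap the localization provided by Corollary \ref{corol: diffuse interface size}. There the diffuse interface was only confined to a $CR^{1-\alpha/2}$-neighborhood of $l_{\al_1}\cup l_{\al_2}$ because the ambient energy could be controlled only up to an error $O(R^{1-\alpha})$; now that Lemma \ref{lem:sharp ene bd} pins the energy to $2\sigma R+O(1)$, the very same slicing mechanism --- originating in \cite[Proposition 4.2]{alikakos2024triple}, carried out in detail in \cite[Proposition 6.1]{geng2025uniqueness}, and already exploited in Lemma \ref{lem:lower bd} above --- will upgrade this to a $CR^{1/2}$-neighborhood. The heuristic is that if the diffuse interface bows away from the straight sharp interface by a distance $d$, then carrying the phase transition along the bowed locus costs an excess of energy $\gtrsim \sigma d^2/R$ over the straight transition, and comparing this excess with the $O(1)$ slack in \eqref{ineq:sharp bd} forces $d\le CR^{1/2}$.

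In detail, I would fix a large $R$ and work near one interior branch, say $l_{\al_1}$, which by \eqref{particular hatu} separates $a_1$ (for $\theta<\al_1$) from $a_3$ (for $\al_1<\theta<\al_2$); the branch $l_{\al_2}$, and the single branch in the two-phase case $\al_1=\al_2$, are treated identically. Rotate coordinates so that $l_{\al_1}$ becomes the positive $y$-axis and restrict to the circular sector $\Sigma$ of $B_R^+$ between the rays $l_{\al_1/2}$ and $l_{(\al_1+\al_2)/2}$, along which $u$ is exponentially close to $a_1$ and to $a_3$ by \eqref{exp decay on th0/2, (th0+pi)/2} and Corollary \ref{corol: diffuse interface size}. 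For a constant $\kappa\in[0,1]$, split
\[
\tfrac12|\na u|^2+W(u)=\Big(\tfrac12|\pa_x u|^2+\kappa^2 W(u)\Big)+\Big(\tfrac12|\pa_y u|^2+(1-\kappa^2)W(u)\Big),
\]
and integrate the first summand over horizontal slices and the second over vertical slices of $\Sigma$. Each horizontal slice carries an $a_1$--$a_3$ transition whose two endpoints lie on the good rays $l_{\al_1/2}$, $l_{(\al_1+\al_2)/2}$ or, for the slices near the top of $\Sigma$, on the arc $\pa B_R$ outside the diffuse region; taking the well-closeness threshold to shrink like $R^{-1}$ rather than using a fixed $\gamma$ --- which is legitimate because Corollary \ref{corol: diffuse interface size} already gives exponential decay of $|u-a_i|$ outside the current $O(R^{1-\alpha/2})$ collar --- Lemma \ref{lemma: 1D energy estimate} yields $\int_\Sigma(\tfrac12|\pa_x u|^2+\kappa^2 W(u))\,dz\ge \kappa\sigma R-C$ with $C=O(1)$. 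The transverse term records the bow: a slicing in the $y$-direction, exactly as in the proof of Lemma \ref{lem:lower bd}, gives $\int_\Sigma(\tfrac12|\pa_y u|^2+(1-\kappa^2)W(u))\,dz\ge \sqrt{1-\kappa^2}\,\sigma\,d_1-C$, where $d_1$ is the maximal distance in $\Sigma$ from $\mathcal{D}_\gamma$ to $l_{\al_1}$. Adding the two estimates and optimizing over $\kappa$ gives
\[
J(u,\Sigma)\ \ge\ \sigma\sqrt{R^2+d_1^2}-C\ \ge\ \sigma R+\frac{\sigma d_1^2}{4R}-C,
\]
and the analogous inequality holds in the sector around $l_{\al_2}$ with deviation $d_2$. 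Since the remaining two sectors (occupied by the pure phases $a_1$ and $a_2$) contribute nonnegative energy, $J(u,B_R^+)\ge 2\sigma R+\tfrac{\sigma(d_1^2+d_2^2)}{4R}-C$; comparing with the upper bound in \eqref{ineq:sharp bd} forces $d_1^2+d_2^2\le CR$, that is, $\mathcal{D}_\gamma\cap B_R^+\subset N_{CR^{1/2}}(l_{\al_1}\cup l_{\al_2})$.

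With this localization in hand, the refinement of \eqref{exp decay, 1-al/2} --- replacing the angular collars of width $Cr^{-\alpha/2}$ by collars of width $Cr^{-1/2}$ --- follows verbatim from the argument that proved Corollary \ref{corol: diffuse interface size} (equivalently Lemma \ref{lem:loc of diffuse interface}): for a point $z$ lying outside a $2CR^{1/2}$-neighborhood of $l_{\al_1}\cup l_{\al_2}\cup\pa B_R^+$, the Caffarelli--C\'{o}rdoba density estimate of Lemma \ref{lem:cafarelli-cordoba}, together with the energy bound \eqref{ineq:sharp bd}, rules out any excursion of $u$ away from the wells on the disk $B_\rho(z)$ with $\rho\sim\dist\!\big(z,\,N_{CR^{1/2}}(l_{\al_1}\cup l_{\al_2}\cup\pa B_R^+)\big)$, and a standard maximum-principle/De Giorgi iteration then yields $\min_i|u(z)-a_i|\le Ke^{-k\rho}$; rewriting in polar coordinates gives precisely \eqref{exp decay, 1-al/2} with $\alpha$ replaced by $1$.

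The principal obstacle is the transverse lower bound --- showing that a deviation of size $d$ genuinely costs $\gtrsim \sigma d^2/R$ of energy with only an additive $O(1)$ error. A crude coarea inequality $J(u,\Sigma)\gtrsim\int_0^\sigma \ch(\{d(u,a_i)=s\})\,ds$ loses an amount proportional to how far $u$ sits from the wells, of order $\gamma R$ for a fixed threshold $\gamma$, which would overwhelm the $O(1)$ slack; the device that makes the present lemma succeed where Lemma \ref{lem:lower bd} could only afford an $O(R^{2/3})$ error is precisely that Corollary \ref{corol: diffuse interface size} already delivers exponential decay of $|u-a_i|$ outside the current collar at every intermediate scale $r\in(C_0,R)$, so that all the one-dimensional transition estimates may be applied between points where $u$ is within $R^{-1}$ of a well, keeping each error $O(1)$. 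A secondary, half-space-specific point is to control the slices near the tip $\pa B_R\cap l_{\al_1}$ and the boundary layer along $\pa\BR_+^2$; this is handled exactly as in the proofs of Lemma \ref{lem:rough ene upper bdd} and Proposition \ref{prop: ene equipartition}, where those contributions were likewise shown to be $O(1)$.
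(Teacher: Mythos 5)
Your overall strategy (slicing in two orthogonal directions, optimizing over the splitting parameter $\kappa$, and comparing the resulting excess with the $O(1)$ slack in \eqref{ineq:sharp bd}) is the right mechanism, but the one-shot form in which you apply it inside $B_R^+$ contains a genuine gap, and the key inequality $J(u,\Sigma)\ge\sigma\sqrt{R^2+d_1^2}-C$ with $d_1$ the maximal deviation of $\mathcal{D}_\gamma$ from $l_{\al_1}$ is false. Consider a configuration whose diffuse interface is a one-dimensional transition layer along the straight ray at angle $\al_1+\e$ with $\e= CR^{-\al/2}$: it is consistent with the a priori localization of Corollary \ref{corol: diffuse interface size} at every scale $r\le R$, its energy in $B_R^+$ is $\sigma R+O(1)$ (a tilted radius is still a radius), yet its deviation from $l_{\al_1}$ at radius $R$ is $CR^{1-\al/2}\gg R^{1/2}$. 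Your slicing does not detect any excess for this configuration: the vertical slices over the swept interval of length $R\sin\e$ do gain $\sqrt{1-\kappa^2}\,\sigma R\sin\e$, but the horizontal slices only contribute $\kappa\sigma R\cos\e$ rather than $\kappa\sigma R-C$, because the slices above the top of the tilted interface (equivalently, the slices whose endpoints both fall inside the a priori $O(R^{1-\al/2})$ angular collar on $\pa B_R$) carry no guaranteed transition; optimizing $\kappa\cos\e+\sqrt{1-\kappa^2}\sin\e$ gives exactly $1$, so the two contributions cancel to $\sigma R$ with zero excess. In other words, the information needed to pin the \emph{direction} of the interface to accuracy $R^{-1/2}$ is simply not contained in $B_R^+$: it must come from scales larger than $R$, where a tilted ray would have to bend back toward $l_{\al_1}$. (A secondary issue: a single point of $\mathcal{D}_\gamma$ at distance $d_1$ does not by itself force transitions on a set of vertical lines of measure $\sim d_1$; one needs the interface position pinned at two heights so that every transversal between them genuinely joins distinct phases.)

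The paper's proof supplies exactly the missing multi-scale ingredient. It first proves a two-circle comparison on the annulus $A^+_{R,9R}$: choosing radii $R_1\in(\tfrac12R,R)$, $R_2\in(9R,10R)$ on which the interface angles $\theta^1_{13},\theta^2_{13}$ are well defined, every ray $l_\theta$ with $\theta$ strictly between them meets $\pa B_{R_1}$ in one phase and $\pa B_{R_2}$ in the other, so the radial slicing is rigorous and, combined with the tangential count and \eqref{ineq:sharp bd}, forces $|\theta^1_{13}-\theta^2_{13}|\le C_1R^{-1/2}$. Iterating this at scales $R,3R,9R,\dots$ and summing the geometric series (using $\theta_{13}^{3^kR}\to\al_1$) pins $|\theta^R_{13}-\al_1|\le CR^{-1/2}$, and the variational maximum principle (Lemma \ref{lemma: maximum principle}) applied between good rays $\beta_1,\beta_3^1,\beta_3^2,\beta_2$ controls the interface on all intermediate circles. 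To repair your argument you would need to replace the "maximal deviation $d_1$ inside $B_R^+$" by the difference of interface positions between two widely separated circles, and then add the iteration over unboundedly many scales; your final paragraph on upgrading \eqref{exp decay, 1-al/2} is fine once the localization is in place.
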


\begin{proof}
    The proof closely follows that of \cite[Lemma 3.5 and Remark 3.6]{geng2025rigidity}. We outline the main idea here. The first step is to reduce the proof of the lemma to establishing the following claim.

    \vspace{3mm}
    \noindent \textbf{Claim. }There exists a constant $C(\g,u,W)$ such that the following holds:  for each sufficiently large $R$, on the annulus $A^+_{R,9R}:=\{|z|\in \BR^2_+: R<|z|<9R\}$, there are two angles $\theta_{13}^R,\,\theta_{32}^R\in (0,\pi)$ such that
    \begin{equation}\label{est:diffuse int on annulus}
        \mathcal{D}_\g\cap A^+_{R,9R} \subset \{re^{i\theta}: \theta \in (\theta_{13}^R-CR^{-\f12},\theta_{13}^R+CR^{-\f12})\cup (\theta_{32}^R-CR^{-\f12},\theta_{32}^R+CR^{-\f12}) \}.
    \end{equation}

    Once this claim is established, we may iterate \eqref{est:diffuse int on annulus} at scales $R,3R,9R,...$ and obtain that 
    \begin{equation*}
        |\theta_{13}^R-\al_1|\leq CR^{-\f12},\quad |\theta_{32}^R-\al_2|\leq CR^{-\f12},
    \end{equation*}
    which together with \eqref{est:diffuse int on annulus} directly yields the first statement of the lemma, namely, the distance of $\mathcal{D}_\g\cap B_R^+$ to the sharp interface is bounded by $CR^{\f12}$. Then the exponential decay estimates \eqref{exp decay, 1-al/2} can be improved accordingly.  

   We now turn to the proof of the claim. For fixed $\g$, by Corollary \ref{corol: diffuse interface size} we can exclude the possibility that $\mathcal{D}_\gamma \cap A_{R,9R}^+$ intersects a small neighborhood of the real axis $\{y=0\}$. It suffices to consider neighborhoods of $l_{\al_1} \cup l_{\al_2}$. 

   For any $R$ large enough, by Corollary \ref{corol: diffuse interface size}, Lemma \ref{lem:sharp ene bd} and the analogous argument as in the beginning of Section \ref{sec:uniqueness}, there are $R_1\in(\f12 R,R)$,  $R_2\in(9R,10R)$ and angles $\theta_{13}^{1},\, \theta_{13}^{2},\, \theta_{32}^1,\,\theta_{32}^2$,   such that the following hold.
   \begin{align*}
       & |\theta_{13}^i-\al_1|\leq C R^{-\f{\al}{2}},\ \ |\theta_{32}^i-\al_2|\leq CR^{-\f{\al}{2}},\quad i=1,2.\\
       & |u(r,\theta)-a_1|\leq \g,\ \text{ when }r=R_i,\ \theta\in(0, \theta_{13}^i-\f{C}{R}),\ i=1,2.\\
    &|u(r,\theta)-a_3|\leq \g,\ \text{ when }r=R_i,\ \theta\in(\theta_{13}^i+\f{C}{R}, \theta_{32}^i-\f{C}{R}),\ i=1,2.\\
     &|u(r,\theta)-a_2|\leq \g,\ \text{ when }r=R_i,\ \theta\in( \theta_{32}^i+\f{C}{R},\pi),\ i=1,2.
   \end{align*}
where $C$ depends only on $\g$, $u$, and $W$, not on $R$.

Now we prove that $|\theta_{13}^1-\theta_{13}^2|\leq C_1R^{-\f12}$. Otherwise, suppose without loss of generality that $\theta_{13}^1- \theta_{13}^2>C_1R^{-\f12}$, for some constant $C_1=C_1(\g,u,W)$ will be determined later. Then for each $\theta\in (\theta_{13}^2+CR^{-1},\theta_{13}^1-CR^{-1})$, $l_\theta$ intersects with $\pa B_{R_1}$ at an $a_1$ point and with $\pa B_{R_2}$ at an $a_3$ point. Consequently,
\begin{equation*}
\int_{\theta_{13}^2+CR^{-1}}^{\theta_{13}^1-CR^{-1}}\left(\int_{R_1}^{R_2} \left( \f12|\pa_r u|^2+  \tau^2 W(u)  \right)\,dr\right) r\,d\theta  \geq \frac{C_1}{2} \sigma R^{\f12} \tau,  
\end{equation*}
for arbitrary $\tau\in (0,1)$. Utilizing \eqref{exp decay on th0/2, (th0+pi)/2} we have the following energy estimate of the energy which only includes the tangential variation on $A_{R_1,R_2}^+$:
\begin{equation*}
    \int_{A_{R_1,R_2}^+} \left(\f12|\pa_T u|^2+(1-\tau^2)W(u)\right)\,dz\geq \sqrt{1-\tau^2}(2\sigma-Ce^{-kR})(R_2-R_1)\geq 2\sqrt{1-\tau^2}\sigma(R_2-R_1)-C.
\end{equation*}
Combining the radial variation and the tangential variation yields
\begin{equation*}
\begin{split}
    & \int_{A_{R_1,R_2}^+} \left(\f12|\na u|^2+W(u)\right)\,dz\\
     &\quad\geq \max\limits_{\tau \in (0,1)} \bigg\{\frac{C_1}{2} \sigma R^{\f12} \tau + 2\sqrt{1-\tau^2}\sigma(R_2-R_1)-C\bigg\}\\
     &\quad = \sigma\sqrt{\f{C_1^2}{4}R+ 4(R_2-R_1)^2}-C.
\end{split}
\end{equation*}
We can choose $C_1$ large enough to obtain
\begin{equation*}
    \int_{A_{R_1,R_2}^+} \left(\f12|\na u|^2+W(u)\right)\,dz \geq 2\sigma (R_2-R_1) +4C_2,
\end{equation*}
where $C_2$ is the constant in \eqref{ineq:sharp bd}. We arrive at a contradiction with \eqref{ineq:sharp bd}. Therefore, it must hold that $|\theta_{13}^1-\theta_{13}^2|\leq C_1 R^{-\f12}$. Similarly, $|\theta_{32}^1-\theta_{32}^2|\leq C_1 R^{-\f12}$. 

Set $\theta_{13}^R:= \f{\theta_{13}^1+\theta_{13}^2}{2}$ and $\theta_{32}^R:= \f{\theta_{32}^1+\theta_{32}^2}{2}$. It remains to show the existence of a constant $C$, such that for any $R\in [R_1,R_2]$,
\begin{equation}
    \mathcal{D}_\g\cap \pa B_{R} \subset \{Re^{i\theta}: \theta \in (\theta_{13}^R-CR^{-\f12},\theta_{13}^R+CR^{-\f12})\cup (\theta_{32}^R-CR^{-\f12},\theta_{32}^R+CR^{-\f12}) \}.
\end{equation}
$R=R_1,R_2$ satisfy the above relation automatically. Define the set of angles 
\begin{equation*}
    \Theta_R:=\{ \theta: l_\theta \cap A_{R_1,R_2}^+\cap D_{\g/2} \neq \emptyset \}.
\end{equation*}
By the same method of splitting the energy in the tangential and radial directions and comparing with the sharp energy upper bound, one can obtain the following result
\begin{equation*}
    |\Theta_R|\leq C_3(\g,u,W) R^{-\f12}. 
\end{equation*}
Hence, there are angles $\beta_1\in (\theta_{13}^R-C_3R^{-\f12}, \theta_{13}^R)$, $\beta_3^1\in (\theta_{13}^R, \theta_{13}^R+C_3 R^{-\f12})$, $\beta_3^2\in (\theta_{32}^R-C_3R^{-\f12}, \theta_{32}^R)$ and $\beta_2\in (\theta_{32}^R, \theta_{32}^R+C_3 R^{-\f12})$ such that for every $r\in(R_1,R_2)$,
\begin{align*}
    & |u(r,\beta_1)-a_1|<\f{\g}{2}, \quad |u(r,\beta_{3}^1)-a_3|<\f{\g}{2},\\
     & |u(r,\beta_3^2)-a_3|<\f{\g}{2}, \quad |u(r,\beta_2)-a_2|<\f{\g}{2}.
\end{align*}
\eqref{est:diffuse int on annulus} follows immediately by applying the Variational Maximum Principle, i.e. Lemma \ref{lemma: maximum principle}, with $r=\f{\g}{2}$ and the domain $D$ being $\{re^{\theta}: r\in (R_1,R_2), \theta\in (0,\beta_1)\}$, $\{re^{\theta}: r\in (R_1,R_2), \theta\in (\beta_3^1,\beta_3^2)\}$ and $\{re^{\theta}: r\in (R_1,R_2), \theta\in (\beta_2,\pi)\}$, respectively. The proof is completed. 

\end{proof}

So far we have shown that the diffuse interface $\mathcal{D}_\g\cap \pa B_R$ is contained in an $O(R^{\f12})$-neighborhood of the sharp interface. In order to prove Theorem \ref{main thm: refined behavior near interface}, it remains to refine the distance  to $O(1)$.

Given a sufficiently large $R$, we define the following quadrilateral $Q_R$.
\begin{equation*}
     \pa Q_R  := L_1\cup L_2\cup L_3 \cup L_4,
\end{equation*}
where
\begin{equation*}
\begin{split} 
     L_1 & := \{(x, x\tan{\f{\al_1}{2}}): x\in [0,R]\},\\
     L_2 & := \{(x,y): x\cos\al_1+y\sin\al_1=R,\,x\in[\f{\sin\al_2-\sin\al_1}{\sin(\al_2-\al_1)}R,R]\},\\
     L_3 & := \{(x,y): x\cos\al_2+y\sin\al_2=R,\,x\in[-R,\f{\sin\al_2-\sin\al_1}{\sin(\al_2-\al_1)}R]\},\\
     L_4 & := \{(x,-x\cot{\f{\al_2}{2}}): x\in[-R,0]\}.
    \end{split}
\end{equation*}
$Q_R$ is the quadrilateral enclosed by $l_{\frac{\al_1}{2}}$ ($L_1$), $l_{\frac{\al_2+\pi}{2}}$ ($L_4$),  and the tangent lines to $B_R$ at $(R\cos\al_1,R\sin\al_1)$ ($L_2$) and at $(R\cos\al_2,R\sin\al_2)$ ($L_3$). We now establish the same energy lower and upper bounds for $J(u, Q_R)$.

\begin{lemma}\label{lem: ene bd on QR}
    There exists a constant $C(u,W)$ such that for sufficiently large $R$,
    \begin{equation}\label{ineq: ene bd on QR}
        2\sigma R-C\leq J(u,Q_{R})\leq 2\sigma R+C. 
    \end{equation}
\end{lemma}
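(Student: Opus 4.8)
The strategy is to reduce the whole estimate to the already-established sharp bounds on half-disks, Lemma \ref{lem:sharp ene bd}, by exploiting the elementary geometry of $Q_R$. The key facts I will use about $Q_R$ are: it is a convex quadrilateral contained in the angular sector $\{re^{i\theta}:\theta\in[\f{\al_1}{2},\f{\al_2+\pi}{2}]\}$; it contains the whole sector-cap $\{re^{i\theta}:0<r<R,\ \f{\al_1}{2}<\theta<\f{\al_2+\pi}{2}\}$, since $Q_R$ is convex, contains $O$, and its outer edges $L_2,L_3$ lie on lines \emph{tangent} to $\pa B_R$; and $Q_R$ lies on the origin side of $L_2$, which is tangent to $\pa B_R$ at $(R\cos\al_1,R\sin\al_1)\in l_{\al_1}$, and on the origin side of $L_3$, tangent at $(R\cos\al_2,R\sin\al_2)\in l_{\al_2}$. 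I write $Q_R=(Q_R\cap B_R^+)\sqcup(Q_R\setminus B_R^+)$ up to a null set.

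For the lower bound I argue exactly as in the proof of Lemma \ref{lem:sharp ene bd}: $J(u,Q_R)\ge J(u,Q_R\cap B_R^+)$, and since $Q_R\cap B_R^+$ is the sector-cap above, slicing in the radial variable gives $J(u,Q_R\cap B_R^+)=\int_0^R\big(\int_{\pa B_r\cap Q_R}(\f12|\pa_T u|^2+W(u))\,d\ch\big)\,dr$. For each $r<R$ the arc $\pa B_r\cap Q_R$ spans $(\f{\al_1}{2},\f{\al_2+\pi}{2})$, hence contains the three rays $l_{\al_1/2}$, $l_{(\al_1+\al_2)/2}$, $l_{(\al_2+\pi)/2}$ along which, by \eqref{exp decay on th0/2, (th0+pi)/2}, $u$ is within $Ke^{-kr}$ of $a_1$, $a_3$, $a_2$ respectively; applying Lemma \ref{lemma: 1D energy estimate} to the two subarcs yields a slice bound $\ge 2\sigma-Ce^{-2kr}$ (using $\sigma_{13}=\sigma_{32}=\sigma$), and integrating gives $J(u,Q_R)\ge 2\sigma R-C$.

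For the upper bound, $J(u,Q_R\cap B_R^+)\le J(u,B_R^+)\le 2\sigma R+C$ by Lemma \ref{lem:sharp ene bd}, so it remains to show $J(u,Q_R\setminus B_R^+)\le C$. Split $Q_R\setminus B_R^+=A\cup B$, where $A$ is the part within Euclidean distance $C_0R^{1/2}$ of $l_{\al_1}\cup l_{\al_2}$ and $B$ is the remainder. On $B$, the refined exponential decay of Corollary \ref{corol: diffuse interface size} (i.e. \eqref{exp decay, 1-al/2} with $\al$ replaced by $1$, as in Lemma \ref{lem:exp decay: 1/2}) shows, for a suitable choice of $C_0$, that $u$ is within $Ke^{-kR^{1/2}}$ of a single well throughout $B$; hence by \eqref{uniform bound} and interior elliptic estimates the energy density on $B$ is $\le Ce^{-2kR^{1/2}}$, and $J(u,B)\le C|Q_R|e^{-2kR^{1/2}}\le CR^2e^{-2kR^{1/2}}=o(1)$. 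On $A$: a point of $Q_R$ lying outside $B_R$ and within angular distance $O(R^{-1/2})$ of $l_{\al_1}$ sits on the origin side of the line $L_2$ tangent to $\pa B_R$ at $(R\cos\al_1,R\sin\al_1)$, and an elementary computation then forces its radius to be at most $R+C'$ for a fixed constant $C'$; the same holds near $l_{\al_2}$ using $L_3$. Therefore $A\subset A_{R,R+C'}^+=B_{R+C'}^+\setminus B_R^+$, and Lemma \ref{lem:sharp ene bd} applied at radii $R$ and $R+C'$ gives $J(u,A)\le J(u,A_{R,R+C'}^+)=J(u,B_{R+C'}^+)-J(u,B_R^+)\le 2\sigma C'+2C=O(1)$. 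Combining the three pieces yields $J(u,Q_R)\le 2\sigma R+C$, and \eqref{ineq: ene bd on QR} follows.

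The step I expect to be the main obstacle is exactly the control of $J(u,Q_R\setminus B_R^+)$. A crude bound using only the $L^\infty$ control \eqref{uniform bound} fails, since $Q_R\setminus B_R^+$ has area of order $R^2$; one must know that all of its non-negligible energy is concentrated in a bounded-width annular collar just outside $\pa B_R$. This is precisely why $Q_R$ is constructed with its outer edges tangent to $B_R$ at the two points where the interfaces exit: it forces the diffuse interface, which by Lemma \ref{lem:exp decay: 1/2} already stays within $O(R^{1/2})$ of $l_{\al_1}\cup l_{\al_2}$, to remain within $O(1)$ outward distance of $\pa B_R$ inside $Q_R$. The argument is uniform across the subcases of \eqref{particular hatu} — both $\f{2\pi}{3}\le\al_2-\al_1<\pi$ and the two-phase degeneration $\al_1=\al_2\in(0,\pi)$ — because it only uses the positions of $l_{\al_1},l_{\al_2}$ and the tangent lines $L_2,L_3$, never the finer structure of the sharp interface; alternatively one could re-run the explicit competitor construction of Lemma \ref{lem:sharp ene bd} directly on $Q_R$, the heteroclinic strips along $l_{\al_1}$ and $l_{\al_2}$ now hitting $L_2$ and $L_3$ orthogonally at distance exactly $R$, which again produces the leading term $2\sigma R$ with $O(1)$ boundary-layer error.
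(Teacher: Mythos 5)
Your proof is correct, and while the lower bound coincides with the paper's argument (radial slicing of the sector $\{0<r<R,\ \f{\al_1}{2}<\theta<\f{\al_2+\pi}{2}\}=Q_R\cap B_R^+$ together with the exponential decay along the three bisecting rays; note only that your displayed identity for $J(u,Q_R\cap B_R^+)$ should be an inequality $\geq$, since you discard $|\pa_r u|^2$), your upper bound takes a genuinely different route. The paper first proves the weaker bound $J(u,Q_R)\leq 2\sigma(R+R^{1/2})+C$ using the same tangency computation you use, but only to show $\dist(z,l_{\al_1}\cup l_{\al_2})\geq CR^{3/4}$ off $B^+_{R+R^{1/2}}$; it then selects a good radius $\tilde R\in(R,2R)$ with controlled boundary energy on $\pa Q_{\tilde R}$, constructs an explicit heteroclinic competitor along $OP_1\cup OP_2$ with $|OP_i|\leq\tilde R+C$, and transfers back from $\tilde R$ to $R$ via the lower bound on $Q_{\tilde R}\setminus Q_R$. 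You instead push the tangency computation to its sharp conclusion: a point of $Q_R\setminus B_R^+$ within $C_0R^{1/2}$ of $l_{\al_1}$ satisfies (in rotated coordinates) $x\leq R$ and $|y|\leq C_0R^{1/2}$, hence $|z|\leq\sqrt{R^2+C_0^2R}\leq R+C_0^2/2$, so that region sits inside $A^+_{R,R+C'}$ and its energy is bounded by $J(u,B^+_{R+C'})-J(u,B^+_R)\leq 2\sigma C'+2C$ using both halves of Lemma \ref{lem:sharp ene bd}; the remaining far region has exponentially small energy density by the refined decay of Lemma \ref{lem:exp decay: 1/2}. This bypasses the competitor construction entirely, at the price of invoking the sharp disk bounds at two radii; since both Lemma \ref{lem:sharp ene bd} and the $O(R^{1/2})$ localization are established before this lemma, there is no circularity, and your reduction is shorter and arguably more transparent about why the tangent lines $L_2,L_3$ appear in the definition of $Q_R$.
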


\begin{proof}
If we examine the proof of the lower bound in \eqref{ineq:sharp bd} closely, we can obtain the following stronger inequality
\begin{equation*}
 \int_0^R \int_{\f{\al_1}{2}}^{\f{\pi+\al_2}{2}} \left(\f12|\na u|^2+W(u)\right)\,d\theta\,r\,dr\geq 2\sigma R-C.
\end{equation*}
This follows from the fact that the estimate only depends on the values of $u$ along $l_{\f{\al_1}{2}},\, l_{\f{\al_1+\al_2}{2}}$ and $ l_{\f{\al_2+\pi}{2}}$, which induces at least two phase transitions on each $\pa B_r$. This inequality immediately gives the lower bound 
\begin{equation*}
    J(u,Q_R)\geq 2\sigma R-C,\quad \forall R.
\end{equation*}

For the upper bound, we first establish the following weaker form.
\begin{equation*}
    J(u,Q_R)\leq 2\sigma(R+R^{\f12})+C, \quad \forall R.
\end{equation*}

By elementary geometry we have 
\begin{equation*}
    \dist(z, l_{\al_1}\cup l_{\al_2})\geq CR^{\f34}, \quad \forall z\in Q_R\setminus B_{R+R^{1/2}}^+,
\end{equation*}
which, combined with Lemma \ref{lem:exp decay: 1/2}, further implies 
\begin{equation*}
    |u(z)|+|\na u(z)|\leq Ce^{-C R^{\f34}}, \quad \forall z\in Q_R\setminus B_{R+R^{1/2}}^+.
\end{equation*}
Therefore, we can assert that the energy contribution from $Q_R\setminus B_{R+R^{1/2}}^+$ is negligible and 
\begin{equation*}
    J(u,Q_R)\leq J(u,B_{R+R^{1/2}}^+)+C\leq 2\sigma(R+R^{\f12})+C.
\end{equation*}

Consequently, for sufficiently large $R$, there exists $\tilde{R}\in (R,2R)$ such that 
\begin{equation*}
    \int_{\pa Q_{\tilde{R}}} \left(\f12|\pa_T u|^2+W(u)\right)\,d\ch \leq 2\sigma +o(1),
\end{equation*}
which leads to the existence of two points $P_1=(x_1,y_1)\in L_2$ and $P_2=(x_2,y_2)\in L_3$ (where $L_2$, $L_3$ are defined with respect to $\tilde{R}$) such that 
\begin{align*}
   & \dist(P_1, l_{\al_1})\leq C\tilde{R}^{\f12},\quad \dist(P_2,l_{\al_2})\leq C\tilde{R}^{\f12};\\
   &|u(x,y)-a_1|\leq Ce^{-k(|x-x_1|-C)},\quad \forall (x,y)\in L_2,\ x_1+C<x<\tilde{R},\\
   &|u(x,y)-a_3|\leq Ce^{-k(|x-x_1|-C)},\quad \forall (x,y)\in L_2,\ \f{\sin\al_2-\sin\al_1}{\sin(\al_2-\al_1)}\tilde{R}<x<x_1-C,\\
   &|u(x,y)-a_2|\leq Ce^{-k(|x-x_2|-C)},\quad \forall (x,y)\in L_3,\ -\tilde{R}<x<x_2-C,\\
   &|u(x,y)-a_3|\leq Ce^{-k(|x-x_2|-C)},\quad \forall (x,y)\in L_3,\ x_2+C<x<\f{\sin\al_2-\sin\al_1}{\sin(\al_2-\al_1)}\tilde{R}.
\end{align*}
Here $C=C(u,W)$, and its value may vary across these estimates. Moreover, on $L_1$ and $L_4$ (also defined with respect to $\tilde{R}$), $u(z)$ is close to $a_2$ and $a_1$ respectively. With this precise estimate of $u$ along $\pa Q_R$, we can construct an explicit energy competitor in $Q_{\tilde{R}}$ in the same manner as in the proof of Lemma \ref{lem:sharp ene bd}, see Appendix \ref{apx: sharp upper bd}. Here the sharp interface, around which we build the transition layers, is simply the union of the two line segments $OP_1$ and $OP_2$, whose lengths are controlled by 
\begin{equation*}
    |OP_i|\leq \sqrt{\tilde{R}^2+C\tilde{R}}\leq \tilde{R}+C,\quad i=1,2.
\end{equation*}

The total energy of the competitor is given by $\sigma(|OP_1| + |OP_2|)=2\sigma \tilde{R}+C$, up to additional contributions from the interpolation layers and the interiors of each phase, which can be controlled by a universal constant. Consequently, we obtain
\begin{equation*}
    J(u,Q_{\tilde{R}})\leq 2\sigma \tilde{R}+C.
\end{equation*}

The same upper bound for general $R$ follows immediately from $J(u,Q_{\tilde{R}}\setminus Q_R)\geq 2\sigma (\tilde{R}-R)-C$, due to the same reasoning of the lower bound. The proof is complete.

\end{proof}

Set $\be$ as the unit vector representing the direction of $l_{\al_1}$, i.e.
\begin{equation*}
    \mathbf{e}:=(\cos\al_1,\sin\al_1),
\end{equation*}
and $\be^\perp:=(\sin\al_1,-\cos\al_1)$ as its orthogonal unit vector. We have the following estimate for the directional derivative $\pa_{\be} u$ on the infinite sector of the opening angle $\f{\al_2}{2}$ between $l_{\f{\al_1}{2}}$ and $l_{\f{\al_1+\al_2}{2}}$.
\begin{lemma}\label{lem:small dir der}
There exists a constant $C=C(u,W)$ such that 
\begin{equation}\label{est:dir der bdd}
    \int_{0}^\infty \int_{\f{\al_1}{2}}^{\f{\al_1+\al_2}{2}} |\pa_{\be} u|^2 r\,d\theta dr\leq C.
\end{equation}
\end{lemma}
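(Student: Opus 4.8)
The plan is to exploit the energy gap: on the sector $\Sigma := \{\theta \in (\tfrac{\al_1}{2}, \tfrac{\al_1+\al_2}{2})\}$, the sharp interface $\hat u$ consists of the single ray $l_{\al_1}$, so the ``expected'' energy inside $\Sigma \cap B_R$ is exactly $\sigma R$ (one phase transition on each circle $\pa B_r$, cost $\sigma$), whereas Lemma \ref{lem: ene bd on QR} (restricted to this sector) gives $J(u, \Sigma\cap B_R) \le \sigma R + C$. The directional derivative $\pa_{\mathbf e} u$ along the interface direction $\mathbf e = (\cos\al_1,\sin\al_1)$ measures precisely how much $u$ fails to be one-dimensional (a function of $\mathbf e^\perp$ alone) along the interface; a pointwise splitting $\tfrac12|\na u|^2 + W(u) \ge \tfrac12|\pa_{\mathbf e} u|^2 + (\tfrac12|\pa_{\mathbf e^\perp} u|^2 + W(u)) \ge \tfrac12|\pa_{\mathbf e} u|^2 + \sqrt{2W(u)}\,|\pa_{\mathbf e^\perp}u|$, integrated over slices orthogonal to $\mathbf e$, should yield $J \ge \sigma R + \tfrac12\int |\pa_{\mathbf e}u|^2 - (\text{error})$ by a slicing argument exactly parallel to Lemma \ref{lem:lower bd}. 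Subtracting the upper bound from this refined lower bound isolates $\int_\Sigma |\pa_{\mathbf e} u|^2\,dz \le C$, and since $dz = r\,d\theta\,dr$ this is \eqref{est:dir der bdd}.

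Concretely, first I would set up coordinates adapted to $l_{\al_1}$: write $z = s\mathbf e + t\mathbf e^\perp$, and for each fixed $s$ consider the slice $\ell_s := \{z = s\mathbf e + t\mathbf e^\perp\} \cap \Sigma$. By Lemma \ref{lem:exp decay: 1/2} (or Corollary \ref{corol: diffuse interface size}), for $s$ large the two boundary rays $l_{\al_1/2}$ and $l_{(\al_1+\al_2)/2}$ carry $u$ exponentially close to $a_1$ and $a_3$ respectively (cf.\ \eqref{exp decay on th0/2, (th0+pi)/2}), so on $\ell_s$ the map $u$ connects (a point near) $a_1$ to (a point near) $a_3$; Lemma \ref{lemma: 1D energy estimate} then gives $\int_{\ell_s}(\tfrac12|\pa_t u|^2 + W(u))\,dt \ge \sigma - Ce^{-ks}$, hence $\int_{\ell_s}\sqrt{2W(u)}|\pa_t u|\,dt \ge \sigma - Ce^{-ks}$ as well. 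Integrating the pointwise inequality
\[
  \tfrac12|\na u|^2 + W(u) \;\ge\; \tfrac12|\pa_s u|^2 + \sqrt{2W(u)}\,|\pa_t u|
\]
over $\Sigma$ (using Fubini in the $(s,t)$ coordinates, which have unit Jacobian) gives
\[
  J(u,\Sigma) \;\ge\; \tfrac12\int_\Sigma |\pa_s u|^2\,dz + \int_0^{\infty}\!\!\Big(\sigma - Ce^{-ks}\Big)\,ds \;-\; (\text{boundary-layer error near } s=0),
\]
and the $s$-integral of $\sigma$ is $+\infty$, so this must be truncated: I would run the estimate on $\Sigma \cap B_R$, getting $J(u,\Sigma\cap B_R) \ge \tfrac12\int_{\Sigma\cap B_R}|\pa_{\mathbf e}u|^2\,dz + \sigma R - C$, where the $O(1)$ error absorbs the exponential tail $\int_0^\infty Ce^{-ks}\,ds$ and the contribution from the bounded region near the origin. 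Comparing with $J(u,\Sigma\cap B_R) \le \sigma R + C$ from (the sector version of) Lemma \ref{lem: ene bd on QR} gives $\int_{\Sigma\cap B_R}|\pa_{\mathbf e}u|^2\,dz \le C$ uniformly in $R$, and letting $R\to\infty$ yields \eqref{est:dir der bdd}.

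The main technical point is matching the slicing region $\Sigma\cap B_R$ (whose outer boundary is a circular arc) with the upper bound, which was proved for the quadrilateral $Q_R$ rather than for $\Sigma\cap B_R$; one must check that the difference between these two regions is an $O(1)$-neighborhood-of-interface region where, by Lemma \ref{lem:exp decay: 1/2}, $u$ and $\na u$ are exponentially small away from $l_{\al_1}$, so the energy discrepancy is $O(1)$ — this is exactly the kind of bookkeeping already carried out in the proof of Lemma \ref{lem: ene bd on QR}. A secondary point is handling the corner region near the origin, where the lower-bound slices $\ell_s$ are short and $u$ need not be close to any well; but this region is bounded independently of $R$ and contributes only $O(1)$ by the rough upper bound \eqref{est: rough upper bdd}. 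I expect no genuine obstacle beyond this routine region-matching; the argument is a direct adaptation of the slicing in \cite{alikakos2024triple,geng2025rigidity} to a single-interface sector.
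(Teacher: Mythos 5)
Your proposal is correct and follows essentially the same route as the paper: the paper also combines the sharp upper bound $J(u,Q_R)\le 2\sigma R+C$ of Lemma \ref{lem: ene bd on QR} with a lower bound on the sector between $l_{\al_1/2}$ and $l_{(\al_1+\al_2)/2}$ that retains only $\tfrac12|\pa_{\be^\perp}u|^2+W(u)$ (slicing orthogonally to $l_{\al_1}$ and using the exponential closeness to $a_1,a_3$ on the two bisector rays), so that subtracting isolates $\int\tfrac12|\pa_{\be}u|^2\le C$. The region-matching and corner issues you flag are handled in the paper by working directly with $Q_R^1:=Q_R\cap\{\theta\in(\al_1/2,(\al_1+\al_2)/2)\}$ instead of $\Sigma\cap B_R$, but this is only a cosmetic difference.
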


\begin{proof}
    Set 
    \begin{equation*}
        Q_R^1:=Q_R\cap \{re^{i\theta}: \theta\in (\f{\al_1}{2},\f{\al_1+\al_2}{2})\},\quad Q_R^2:= Q_R\setminus Q_R^1.
    \end{equation*}
    It suffices to show 
    \begin{equation*}
        \int_{Q_R^1} |\pa_{\be} u|^2\,dz<C,\quad \forall R,
    \end{equation*}
    for some constant $C$ independent of $R$. Indeed, for any $R$, we have by Lemma \ref{lem: ene bd on QR} that 
    \begin{equation*}
        J(u,Q_R)\leq 2\sigma {R}+C.
    \end{equation*}
    From \eqref{exp decay on th0/2, (th0+pi)/2} we derive that 
    \begin{equation*}
            \int_{Q_{R}^1} \left( \f12|\pa_{\be^\perp} u|^2+W(u) \right)\,dz +J(u,Q_{R}^2)\geq 2\sigma {R}-C.
    \end{equation*}
    Together with the upper bound we obtain
    \begin{equation*}
         \int_{Q_{R}^1} |\pa_{\be} u|^2\,dz<C,  
    \end{equation*}
    which completes the proof.
    
\end{proof}

\begin{rmk}\label{rmk:energy est on QRi}
 A similar argument leads to the following energy bounds on $Q_R^i$:
 \begin{equation}
 \sigma R-C \leq J(u,Q_R^i) \leq \sigma R+C,\quad \forall R,\ i=1,2.
 \end{equation}
\end{rmk}

We now use Lemma \ref{lem:small dir der} to refine the $O(R^{\frac{1}{2}})$-distance between the diffuse interface and $l_{\al_i}$, as suggested by Lemma \ref{lem:exp decay: 1/2}, to $O(1)$, thereby completing the proof of Theorem~\ref{main thm: refined behavior near interface}. The argument follows essentially \cite[Section 3.3]{geng2025rigidity}, which combines techniques from \cite{schatzman2002asymmetric, GUI2008904}. Here we will emphasize the necessary adjustments required to accommodate the half-space setting.

We simply focus on the $a_1$-$a_3$ interface $l_{\al_1}$. To simplify the presentation, we rotate the coordinate system by an angle $\al_1$, defining
\begin{equation*}
    v(x,y) = u(x\cos\al_1 - y\sin\al_1,\, x\sin\al_1 + y\cos\al_1).
\end{equation*}

We focus on the behavior of $v(x,y)$ within the sector:
\begin{equation*}
    \mathcal{S}:= \{(r\cos\theta, r\sin\theta): \theta\in [-\f{\al_1}{2},\f{\al_2-\al_1}{2}]\}.
\end{equation*}
For any $x>0$, we define the line segment
\begin{equation*}
    \Lambda_x:= \{(x,y): -x\tan\f{\al_1}{2}\leq y\leq x\tan\f{\al_2-\al_1}{2}\},
\end{equation*}
which is the intersection of $\overline{\mathcal{S}}$ and the vertical line passing through $(x,0)$. Now \eqref{est:dir der bdd} becomes
\begin{equation}\label{est: vx}
    \int_{\mathcal{S}} |\pa_x v|^2\,dz\leq C.
\end{equation}
And \eqref{exp decay on th0/2, (th0+pi)/2} becomes the following estimate on $\pa\mathcal{S}$,
\begin{equation}\label{decay on paS}
    |v(x, x\tan{\f{\al_2-\al_1}{2}})-a_3| \leq K e^{-kx}, \quad |v(x, -x\tan{\f{\al_1}{2}})-a_1| \leq K e^{-kx}, \quad \forall x>0.
\end{equation}
This together with standard elliptic regularity argument implies 
\begin{equation}\label{gradient decay on paS}
    |\na v(x,x\tan\f{\al_2-\al_1}{2})|\leq Ke^{-kx}, \quad |\na v(x, -x\tan{\f{\al_1}{2}})| \leq K e^{-kx}, \quad \forall x>0.
\end{equation}
Lemma \ref{lem:exp decay: 1/2} yields that
\begin{equation}\label{1/2 layer width for v}
    (\mathcal{D}_\g\cap \mathcal{S} )\subset \{(x,y): |y|\leq C\max\{1,x^{\f12}\}\},
\end{equation}
where we still let $\mathcal{D}_{\g}$ denote the diffuse interface defined for $v$ in the new coordinate system for convenience. Lemma \ref{lem:exp decay: 1/2} also implies 
\begin{equation}\label{exp decay 1/2 for v}
    \begin{split}
    &|v(x,y)-a_3|\leq Ke^{-k(y-Cx^{1/2})}, \quad y>Cx^{\f12},\,x>0,\\
    &|v(x,y)-a_1|\leq Ke^{-k(|y|-Cx^{1/2})},\quad y<-Cx^{\f12},\,x>0.
    \end{split}
\end{equation}
Another useful energy estimate follows from Remark \ref{rmk:energy est on QRi}:
\begin{equation}\label{ene bd on |x|<R}
    \sigma R-C\leq J(v, \mathcal{S}\cap\{x<R\})\leq \sigma R+C,\quad\forall R>0.
\end{equation}

To complete the proof of Theorem \ref{main thm: refined behavior near interface}, it suffices to show there exists a constant $h_0$ such that 
\begin{equation}\label{apprx by U13}
    \|v(x,\cdot)-U_{13}(\cdot-h_0)\|_{C^{2,\beta}_{loc}(\Lam_x;\BR^2)}\to 0\ \text{ as }x\to \infty. 
\end{equation}
We note that this convergence result directly implies
\begin{equation*}
    (\mathcal{D}_\gamma \cap \mathcal{S}) \subset \{ |y| \leq C \},
\end{equation*}
which establishes \eqref{main thm 2:localization of diff interface} in the case $\alpha_i \in (0, \pi)$. The remainder of this section is devoted to the proof of \eqref{apprx by U13}. We first establish the following lemma, which is originally due to \cite[Lemma 8.2]{schatzman2002asymmetric} and has been adapted to our setting with minor modifications.

\begin{lemma}
    There exist constants $C(u,W)$ and $k(u,W)$ such that for any $x>0$,  
    \begin{equation}\label{est: G}
        \left|\int_{\Lam_x} \left(\f12|\pa_y v|^2-\f12|\pa_x v|^2+W(v)\right)\,dy -\sigma \right|\leq Ce^{-kx}.
    \end{equation}
    \begin{equation}\label{est: H}
        \left|\int_{\Lam_x} \pa_x v\,\pa_y v\, dy\right|\leq Ce^{-kx}.
    \end{equation}
\end{lemma}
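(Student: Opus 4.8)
The plan is to recognize $G(x)$ and $H(x)$ as slices of the stress--energy tensor and exploit its divergence--free structure. Put $T_{ij}:=\pa_i v\cdot\pa_j v-\delta_{ij}\big(\tfrac12|\na v|^2+W(v)\big)$; since $v$ solves the (rotated) system \eqref{eq:2D allen cahn}, one has $\pa_j T_{ij}=0$, and in particular $\pa_x T_{11}=-\pa_y T_{12}$ and $\pa_x T_{12}=-\pa_y T_{22}$. Writing $\Lam_x=\{(x,y):y_-(x)\le y\le y_+(x)\}$ with $y_-(x)=-x\tan\tfrac{\al_1}{2}$ and $y_+(x)=x\tan\tfrac{\al_2-\al_1}{2}$, we have $G(x)=-\int_{y_-(x)}^{y_+(x)}T_{11}(x,y)\,dy$ and $H(x)=\int_{y_-(x)}^{y_+(x)}T_{12}(x,y)\,dy$. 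Since $\overline{\mathcal{S}}\setminus\{O\}\subset\BR_+^2$, interior elliptic regularity makes $v$ smooth on $\mathcal{S}$, so $G,H\in C^1(0,\infty)$ and all the differentiations below are legitimate.

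First I would differentiate in $x$. Applying the Leibniz rule and substituting $\pa_x T_{11}=-\pa_y T_{12}$, $\pa_x T_{12}=-\pa_y T_{22}$, the interior integrals telescope, and both $G'(x)$ and $H'(x)$ reduce to a fixed linear combination of $T_{11},T_{12},T_{22}$ evaluated at the two endpoints $(x,y_\pm(x))\in\pa\mathcal{S}$, with coefficients $1$ and $y_\pm'(x)=$ const. These endpoints lie on the boundary rays where, by \eqref{decay on paS} and \eqref{gradient decay on paS}, both $|v-a_i|$ and $|\na v|$ decay like $Ke^{-kx}$; combined with $W(v)\le C\min_i|v-a_i|^2$ near the wells this yields $|T_{ij}(x,y_\pm(x))|\le Ce^{-kx}$, hence $|G'(x)|+|H'(x)|\le Ce^{-kx}$ for all large $x$. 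Therefore $G(x)$ and $H(x)$ converge as $x\to\infty$ to limits $G_\infty,H_\infty$, and integrating $G',H'$ from $x$ to $\infty$ gives $|G(x)-G_\infty|+|H(x)-H_\infty|\le Ce^{-kx}$ for large $x$; on the remaining bounded range of $x$ the desired bound holds trivially after enlarging $C$, since $|\na v|,W(v)\le M$ forces $|G(x)|,|H(x)|\le C|\Lam_x|$.

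It remains to identify $G_\infty=\sigma$ and $H_\infty=0$. For the first, I would add $\int_{\Lam_x}|\pa_x v|^2\,dy$ to $G(x)$ to recover the full energy density, $\int_{\Lam_x}\big(\tfrac12|\na v|^2+W(v)\big)\,dy=G(x)+\int_{\Lam_x}|\pa_x v|^2\,dy=G_\infty+O(e^{-kx})+\int_{\Lam_x}|\pa_x v|^2\,dy$, then integrate over $x\in(0,R)$ and invoke $\int_{\mathcal{S}}|\pa_x v|^2\le C$ from \eqref{est: vx} to obtain $J(v,\mathcal{S}\cap\{x<R\})=G_\infty R+O(1)$; comparing with the sharp bound \eqref{ene bd on |x|<R} and letting $R\to\infty$ forces $G_\infty=\sigma$. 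For $H_\infty$, since $\int_0^\infty\!\big(\int_{\Lam_x}|\pa_x v|^2\,dy\big)\,dx=\int_{\mathcal{S}}|\pa_x v|^2<\infty$, there is a sequence $x_n\to\infty$ along which $\int_{\Lam_{x_n}}|\pa_x v|^2\,dy\to0$; from $G(x_n)\to\sigma$ we get $\int_{\Lam_{x_n}}\big(\tfrac12|\pa_y v|^2+W(v)\big)\,dy\to\sigma$, so $\int_{\Lam_{x_n}}|\pa_y v|^2\,dy$ stays bounded, and Cauchy--Schwarz gives $|H(x_n)|\le\big(\int_{\Lam_{x_n}}|\pa_x v|^2\big)^{1/2}\big(\int_{\Lam_{x_n}}|\pa_y v|^2\big)^{1/2}\to0$, whence $H_\infty=0$.

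The genuinely routine parts are the Leibniz differentiation and the well-estimate $W(v)\le C|v-a_i|^2$; the substantive input is concentrated in the identification $G_\infty=\sigma$, which rests on the hard-won sharp energy bounds of Lemma \ref{lem:sharp ene bd} and Lemma \ref{lem: ene bd on QR} and on Lemma \ref{lem:small dir der} guaranteeing $\pa_x v\in L^2(\mathcal{S})$. I do not expect any real obstacle beyond careful bookkeeping of the moving-boundary terms of $\Lam_x$, which in any case contribute only exponentially small quantities.
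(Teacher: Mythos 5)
Your proposal is correct and is essentially the paper's own argument: the paper differentiates $G$ and $H$ directly, uses the Leibniz rule plus the PDE to convert the interior integrals into endpoint terms (which is exactly the divergence-free stress--energy tensor identity you invoke), bounds those endpoint terms by $Ce^{-kx}$ via \eqref{decay on paS}--\eqref{gradient decay on paS}, and then identifies $G_\infty=\sigma$ by comparing $\int_0^R G$ with $J(v,\mathcal{S}\cap\{x<R\})$ using \eqref{est: vx} and \eqref{ene bd on |x|<R}, and $H_\infty=0$ along a sequence where the $\partial_x v$ slice integral vanishes. The tensor formulation is a tidy repackaging of the same computation, with no substantive difference.
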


\begin{proof}
    Set
    \begin{equation*}
        G(x):= \int_{\Lam_x} \left(\f12|\pa_y v|^2-\f12|\pa_x v|^2+W(v)\right)\,dy, \quad H(x):= \int_{\Lam_x} \pa_x v\,\pa_y v\,dy.
    \end{equation*}
    Direct computation implies
    \begin{align*}
        \frac{d}{dx} G(x)&=  \frac{d}{dx}\left[  \int_{-x\tan\f{\al_1}{2}}^{x\tan\f{\al_2-\al_1}{2}} \left(\f12|\pa_y v|^2-\f12|\pa_x v|^2+W(v)\right)\,dy \right]\\
        &= \tan\f{\al_2-\al_1}{2}\left(  \f12|\pa_y v(x,x\tan\f{\al_2-\al_1}{2})|^2-\f12|\pa_x v(x,x\tan\f{\al_2-\al_1}{2})|^2+W(v(x,x\tan\f{\al_2-\al_1}{2})) \right)\\
        &\qquad +\tan{\f{\al_1}{2}} \left( \f12|\pa_y v(x,-x\tan\f{\al_1}{2})|^2-\f12|\pa_x v(x,-x\tan\f{\al_1}{2})|^2+W(v(x,-x\tan\f{\al_1}{2})) \right)\\
        &\qquad +\int_{-x\tan\f{\al_1}{2}}^{x\tan\f{\al_2-\al_1}{2}} \left(\pa_y v\,\pa^2_{xy}v-\pa_x v\,\pa^2_{xx}v+DW(v) \,\pa_x v\right)\,dy
    \end{align*}
    The first two terms can be bounded by $Ce^{-kx}$ for some constants $C,k$ depending only on $u,W$, thanks to \eqref{decay on paS} and \eqref{gradient decay on paS}.
    Since $v$ solves \eqref{eq:2D allen cahn}, the third term becomes
    \begin{equation*}
    \begin{split}
        &\int_{-x\tan\f{\al_1}{2}}^{x\tan\f{\al_2-\al_1}{2}} \left(\pa_y v\,\pa^2_{xy}v+\pa_x v\,\pa^2_{yy} v\right)\,dy\\
        &\quad = \pa_x v(x,x\tan\f{\al_2-\al_1}{2}) \,\pa_y v(x,x\tan\f{\al_2-\al_1}{2}) - \pa_x v(x,-x\tan\f{\al_1}{2}) \,\pa_y v(x,-x\tan\f{\al_1}{2}),
    \end{split}
    \end{equation*}
    which can be bounded by $Ce^{-kx}$ again by \eqref{gradient decay on paS}. Consequently, we obtain
    \begin{equation*}
        \left|\f{d}{dx} G(x)\right|\leq Ce^{-kx}, \quad \forall x>0.
    \end{equation*}
    Therefore, there is a constant $G_0$ such that 
    \begin{equation*}
        \lim\limits_{x\ri\infty} G(x)=G_0, \quad |G(x)-G_0|\leq Ce^{-kx}.
    \end{equation*}
    It suffices to prove $G_0=\sigma$ to get \eqref{est: G}. Indeed, we have 
    \begin{equation*}
        |\int_{0}^R G(x)\,dx -J(v,\mathcal{S}\cap \{x<R\})|=\int_{\mathcal{S}\cap\{x<R\}} |\pa_x v|^2\,dx\leq C,
    \end{equation*}
    which together with \eqref{ene bd on |x|<R} leads to $|\int_0^R G(x)\,dx-\sigma R|\leq C$ for any $R>0$, and hence $G_0=\sigma$ follows immediately.

    For $H(x)$, a similar calculation using \eqref{eq:2D allen cahn}, \eqref{decay on paS} and \eqref{gradient decay on paS} implies
    \begin{align*}
        \left|\frac{d}{dx} H(x)\right|&=  \bigg|\frac{d}{dx}\left[  \int_{-x\tan\f{\al_1}{2}}^{x\tan\f{\al_2-\al_1}{2}} \left(\pa_x v\,\pa_y v\right)\,dy \right]\bigg|\\
        &= \bigg|\tan\f{\al_2-\al_1}{2}\left(  \pa_x v(x,x\tan\f{\al_2-\al_1}{2})\,\pa_y v(x,x\tan\f{\al_2-\al_1}{2}) \right)\\
        &\qquad +\tan{\f{\al_1}{2}} \left( \pa_x v(x,-x\tan\f{\al_1}{2})\,\pa_y v(x,-x\tan\f{\al_1}{2}) \right)\\
        &\qquad +\int_{-x\tan\f{\al_1}{2}}^{x\tan\f{\al_2-\al_1}{2}} \left( \pa^2_{xx} v\,\pa_yv+\pa_x v\,\pa^2_{xy}v\right)\,dy\bigg|\\
        &\leq  Ce^{-kx} +\bigg|\int_{-x\tan\f{\al_1}{2}}^{x\tan\f{\al_2-\al_1}{2}} \f{d}{dy}\left(-\f12|\pa_yv|^2+ W(v)+\f12|\pa_x v|^2\right)\,dy\bigg|\\
        &\leq Ce^{-kx}.
    \end{align*}
Consequently, there is $H_0$ such that 
\begin{equation*}
        \lim\limits_{x\ri\infty} H(x)=H_0, \quad |H(x)-H_0|\leq Ce^{-kx}.
    \end{equation*}
Since $\int_{\mathcal{S}} |\partial_x v|^2\,dz \leq C$ and $\int_{\mathcal{S} \cap \{x < R\}} |\partial_y v|^2\,dz \leq \sigma R$, it follows that there exists a sequence $x_i \to \infty$ such that $H(x_i) \to 0$. Hence $H_0 = 0$, and the proof is complete.

\end{proof}

Next, we extend our definition of $v$ from $\mathcal{S}$ to the half plane $\{x>0\}$ for convenience. The extended function, still denoted by $v(x,y)$, satisfies
\begin{equation*}
    v(x,y)=\begin{cases}
        v(x,y), & (x,y)\in \mathcal{S},\\
        a_3,  &\dist((x,y),\mathcal{S})>1,\ y>0\\
        a_1,  &\dist((x,y), \mathcal{S})>1,\ y<0\\ 
        \text{smooth interpolation between } a_3/a_1\text{ and }v|_{\pa\mathcal{S}}, &0\leq \dist((x,y),\mathcal{S})\leq 1.
    \end{cases}
\end{equation*}
One can carefully choose the interpolation function to guaranttee that 
\begin{equation}\label{small of v outside S}
    |v(x,y)|+|\na v(x,y)|+|\na^2 v(x,y)| \leq C{e^{-kx}},\quad \forall (x,y) \text{ such that }0\leq \dist((x,y),\mathcal{S})\leq 1.
\end{equation}
This further implies that 
\begin{equation}\label{est:J(v,ext dom)}
    J(v,(\BR^2\cap\{x>0\})\setminus \mathcal{S})\leq C.
\end{equation}

Define the set of functions
\begin{equation*}
    \mathcal{U}:=\{U_{13}(\cdot-h): h\in\BR\},\ \ \text{is the set of all translations of $U_{13}$}.
\end{equation*}
\begin{equation*}
    \mathcal{A}:=\{w\in H_{loc}^1(\BR,\BR^2): w(-\infty)=a_1,\,w(+\infty)=a_3,\, \int_{\BR}(\f12|w'|^2+W(w))\,dx<\infty\}.
\end{equation*}

For simplicity, we shall henceforth write $U_{13}$ as $U$. We further define 
\begin{equation*}
    \begin{split}
        &d_0(w, \mathcal{U}):= \inf\limits_{\tau\in \mathcal{U}}\,\| w-\tau\|_{L^2(\BR;\BR^2)},\\
        &d_1(w, \mathcal{U}):= \inf\limits_{\tau\in \mathcal{U}}\,\| w-\tau\|_{H^1(\BR;\BR^2)}.
    \end{split}
\end{equation*}

We invoke the following result from \cite[Lemma~2.1, Lemma~4.5, Corollary~4.6 ]{schatzman2002asymmetric} to show that for most of $x>0$, $v|_{\S_x}$, which belongs to $\mathcal{A}$, can be approximated by a translation of $U$.

\begin{proposition}\label{prop:appr by U13}
    There exist positive constants $\e$ and $\Lam$ such that 
    \begin{enumerate}
\itemsep0.6em
    \item  For $s=0,1$, if $d_s(w,\mathcal{U})\leq \e$, then there is a unique $h_s(w)$ such that 
    \begin{equation*}
        d_s(w,\mathcal{U})=\|w-U(\cdot- h_s(w))\|_{s},
    \end{equation*}
    where $\|\cdot\|_0=\|\cdot\|_{L^2(\BR;\BR^2)}$ and $\|\cdot\|_1=\|\cdot\|_{H^1(\BR;\BR^2)}$.  Moreover, $h_s$ is a function of class $C^{3-s}$ of $w$ for $s=0,1$.
    \item  If $d_1(w,\mathcal{U})\leq \e$, then
    \begin{equation}\label{ineq: energy controls H1 dist}
        J_1(w)-\sigma \geq \Lam \|w-U(\cdot-h_0(w))\|_{H^1}^2 \geq  \Lam  d_1(w,\mathcal{U})^2.
        \end{equation}
    \item  If $d_1(w,\mathcal{U})> \e$, then $J_1(w)>\sigma+\Lam\e^2$. 
    \end{enumerate}
\end{proposition}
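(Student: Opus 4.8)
The plan is to treat $\mathcal{U}=\{U(\cdot-h):h\in\BR\}$ as a smooth one‑dimensional submanifold of $L^2(\BR;\BR^2)$ (and of $H^1(\BR;\BR^2)$) and to prove the three assertions in turn: part (1) is a tubular‑neighborhood / nearest‑point statement, which I would get from the implicit function theorem; part (2) is a local coercivity estimate for $J_1$ near $\mathcal{U}$, obtained from a second‑order Taylor expansion together with spectral analysis of the linearized operator; part (3) is a global lower bound away from $\mathcal{U}$, obtained by compactness of near‑minimizing heteroclinics combined with the uniqueness hypothesis ($\mathrm{H}_2$). This is the scheme of \cite{schatzman2002asymmetric}, which I would follow, adapting only the bookkeeping.

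For part (1) I would introduce, for $s\in\{0,1\}$, the map $F_s(w,h):=\langle w-U(\cdot-h),\,U'(\cdot-h)\rangle_s$, where $\langle\cdot,\cdot\rangle_0$ is the $L^2$ and $\langle\cdot,\cdot\rangle_1$ the $H^1$ inner product; a translate realizes $d_s(w,\mathcal{U})$ precisely when $\tfrac{d}{dh}\|w-U(\cdot-h)\|_s^2=0$, i.e. $F_s(w,h)=0$. Since $F_s(U,0)=0$ and $\partial_hF_s(U,0)=\|U'\|_s^2\neq0$ ($U$ is non‑constant), the implicit function theorem gives a locally unique solution $h=h_s(w)$ for $w$ near $U$; its regularity is limited only by the smoothness of $U$, which is $C^3$ by bootstrapping the ODE $U''=\nabla_u W(U)$ with $W\in C^2$, hence $h_s\in C^{3-s}$. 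Translation invariance of $F_s$ spreads this over the whole $\e$‑tube around $\mathcal{U}$. To see that $h_s(w)$ is the \emph{global} minimizer and not merely a critical point, I would note that $h\mapsto\|w-U(\cdot-h)\|_s$ is continuous and tends to $+\infty$ as $h\to\pm\infty$ (because $a_1\neq a_3$ forces the norm to grow linearly in $|h|$), so a minimizer exists; a covering argument in $h$ together with the implicit function theorem shows it is unique once $\e$ is small.

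For part (2), given $d_1(w,\mathcal{U})\le\e$ I would translate so that $h_0(w)=0$ and write $w=U+\phi$ with $\langle\phi,U'\rangle_{L^2}=0$. Since the kinetic part of $J_1$ is exactly quadratic, a Taylor expansion at the critical point $U$ gives
\begin{equation*}
    J_1(w)-\sigma=\tfrac12\int_{\BR}\bigl(|\phi'|^2+\phi^{T}\nabla^2W(U)\phi\bigr)\,dx+R(\phi),
\end{equation*}
the first‑variation term being absent since $U$ solves the heteroclinic equation. The linearized operator $\mathcal{L}:=-\partial_x^2+\nabla^2W(U)$ satisfies $\mathcal{L}\ge0$ by the minimality of $U$, has essential spectrum above $\min_i\operatorname{spec}\nabla^2W(a_i)>0$ by ($\mathrm{H}_1$), and has $0$ as a \emph{simple, isolated} eigenvalue with eigenfunction $U'$; hence $\langle\mathcal{L}\phi,\phi\rangle\ge c_0\|\phi\|_{H^1}^2$ on $\{\phi\perp_{L^2}U'\}$. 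Because $\|\phi\|_{L^\infty}\le C\|\phi\|_{H^1}\le C\e$ in one dimension and $\nabla^2W$ is uniformly continuous on the relevant compact set, $|R(\phi)|\le\omega(C\e)\|\phi\|_{H^1}^2$ with $\omega(0^+)=0$; choosing $\e$ small and $\Lam:=c_0/4$ then yields $J_1(w)-\sigma\ge\Lam\|\phi\|_{H^1}^2\ge\Lam\,d_1(w,\mathcal{U})^2$, which is \eqref{ineq: energy controls H1 dist}.

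For part (3) I would argue by contradiction: if it fails for every $\Lam$ admissible in (2), there are $w_n\in\mathcal{A}$ with $d_1(w_n,\mathcal{U})>\e$ and $J_1(w_n)\to\sigma$. The energy bound and ($\mathrm{H}_1$) give uniform exponential decay of $w_n$ to $a_1,a_3$ at $\mp\infty$; after normalizing the translations (say so that $w_n(0)$ lies on the fixed transversal $\{p:d(p,a_1)=d(p,a_3)\}$) and passing to a subsequence, $w_n\to w_\infty$ in $H^1_{loc}$, and the uniform tails upgrade this to $H^1(\BR)$ convergence with $w_\infty\in\mathcal{A}$ and $J_1(w_\infty)\le\sigma$, hence $J_1(w_\infty)=\sigma$; by ($\mathrm{H}_2$) then $w_\infty\in\mathcal{U}$, contradicting $d_1(w_n,\mathcal{U})>\e$. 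Thus $m:=\inf\{J_1(w)-\sigma:\ d_1(w,\mathcal{U})>\e\}>0$, and shrinking $\Lam$ so that $\Lam\e^2<m$ (still compatible with (2)) gives (3). The main obstacle I anticipate is the spectral claim inside step (2)—that $\ker\mathcal{L}=\operatorname{span}(U')$ and is separated from the rest of the spectrum—which for a \emph{system} is not a consequence of Sturm–Liouville theory and genuinely relies on ($\mathrm{H}_2$); the uniform‑decay/compactness input to (3) is the secondary technical point.
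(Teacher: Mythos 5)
The paper does not prove this proposition at all: it is quoted verbatim from \cite[Lemma~2.1, Lemma~4.5, Corollary~4.6]{schatzman2002asymmetric}, so your outline is being compared against Schatzman's argument rather than anything in the text. Your scheme — implicit function theorem for the nearest translate, second‑order expansion plus spectral coercivity of $\mathcal{L}=-\partial_x^2+\nabla^2W(U)$ for the tube, and concentration–compactness plus ($\mathrm{H}_2$) away from the tube — is exactly the scheme of the cited source, and parts (1) and (3) are essentially right as sketched (in (3), for non‑solutions $w_n$ you do not get uniform \emph{exponential} decay, but the weaker statement that small tail energy forces small $H^1$ tail norm, via $W(w)\ge c|w-a_i|^2$ near the wells, is all you need).

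The genuine gap is the point you yourself flag and then wave through: the claim that $0$ is a simple eigenvalue of $\mathcal{L}$, i.e.\ $\ker\mathcal{L}=\operatorname{span}(U')$, does \emph{not} follow from ($\mathrm{H}_2$). Uniqueness of the minimizing heteroclinic up to translation is compatible with a degenerate minimum (the finite‑dimensional analogue is $f(x)=x^4$: unique minimizer, vanishing Hessian), so there could be $\phi\perp_{L^2}U'$ with $\mathcal{L}\phi=0$ that does not integrate to a curve of minimizers; in that case $J_1(U+t\phi)-\sigma=o(t^2)$ and \eqref{ineq: energy controls H1 dist} is false. For the scalar equation simplicity of the kernel is automatic (a second decaying solution of the linearized ODE cannot exist), but for a system it is an independent spectral nondegeneracy condition, and in \cite{schatzman2002asymmetric} it is imposed as a standing \emph{hypothesis}, not derived. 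So your part (2) is incomplete as written; to close it you must either add the nondegeneracy of $U$ as an assumption (which is also what the paper is implicitly doing by invoking Schatzman under ($\mathrm{H}_1$)–($\mathrm{H}_2$) only) or supply a separate argument for why the hypotheses of this paper force $\ker\mathcal{L}$ to be one‑dimensional. Everything downstream of that one claim in your write‑up is standard and correct.
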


Define the ``good" set
\begin{equation*}
    \mathcal{G}:= \{x>0: d_1(v(x,\cdot),\mathcal{U})\leq \e\}.
\end{equation*}
It follows from the energy bound \eqref{ene bd on |x|<R} that 
\begin{equation}\label{bad set small}
    \ch(\BR_+\setminus \mathcal{G})\leq \frac{C}{\e^2}=C(u,W),
\end{equation}
which means aside from the set of finite measure, most $x$ belongs to $\mathcal{G}$.

From  Proposition \ref{prop:appr by U13}, there exists a function $h_0(x)\in C^2(\mathcal{G},\BR)$ such that 
\begin{equation*}
    d_0(v(x,\cdot),\mathcal{U})=\|v(x,\cdot)-U(\cdot-h_0(x))\|_{0}, \quad x\in \mathcal{G}.
\end{equation*}
For simplicity we will omit the subscript and write $h_0(x)$ as $h(x)$. The following identities hold.

    \begin{equation}
        \label{id: orthogonality}
        \int_{-\infty}^\infty (v(x,y)-U(y-h(x)))\cdot U'(y-h(x)) \,dy=0,\quad  x\in \mathcal{G},
    \end{equation}
    \begin{equation}
        \label{id: formula h'(x)}
        h'(x)=\f{\int_{-\infty}^\infty \pa_xv(x,y)\cdot U'(y-h(x))\,dy}{\int_{-\infty}^\infty\big[ |U'(y-h(x))|^2 + U''(y-h(x))(v(x,y)-U(y-h(x)))  \big]  \,dy}.
    \end{equation}

Identity \eqref{id: orthogonality} follows from the fact that $U(y-h(x))$ minimizes the $L^2$ distance to $v(x,\cdot)$. And \eqref{id: formula h'(x)} follows from differentiating \eqref{id: orthogonality} with respect to $x$.

Using \eqref{est: G}, \eqref{est: H}, \eqref{small of v outside S} and \eqref{ineq: energy controls H1 dist}, we have for $x\in\mathcal{G}$,
\begin{equation}\label{est: h' numerator 1}
    \begin{split}
        &\left|\int_{-\infty}^\infty \pa_xv(x,y)\cdot U'(y-h(x))\,dy\right|\\
        = & \left|\int_{-\infty}^\infty \pa_xv(x,y)\cdot (U'(y-h(x))-\pa_yv(x,y))\,dy\right|+\left| \int_{-\infty}^\infty \pa_x v\,\pa_y v\,dy \right|\\
        \leq & \|\pa_x v(x,\cdot)\|_{0}\cdot\|v(x,\cdot)-U(\cdot-h(x))\|_1 +Ce^{-kx}\\
        \leq & \f12\|\pa_x u(x,\cdot)\|_{0}^2+\f1{2\Lam}(\int_{\BR}\f12|\pa_y v(x,y)|^2+W(v(x,y))\,dy-\sigma) + Ce^{-kx},\\
        \leq & \f{1+\Lam}{2\Lam} \|\pa_x u(x,\cdot)\|_0^2+Ce^{-kx}.
    \end{split}
\end{equation}

Now we are in the position to invoke \cite[Lemma 3.12 \& 3.13]{geng2025rigidity} to obtain the following result.
\begin{proposition}\label{prop: existence of h}
    As $x \to \infty$, the distance function $d_0\bigl(v(x,\cdot),\mathcal{U}\bigr) \to 0$. Moreover, there exists a constant $h_0$ such that $h(x) \to h_0$ as $x \to \infty$.
\end{proposition}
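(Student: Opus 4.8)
The plan is to turn the identity \eqref{id: formula h'(x)} into an a priori bound of the form $|h'(x)|\lesssim \|\pa_x v(x,\cdot)\|_0^2+e^{-kx}$ with an integrable right-hand side, and to show that the ``bad set'' where the Lyapunov--Schmidt reduction of Proposition~\ref{prop:appr by U13} breaks down is eventually empty, so that $h$ must converge; this is precisely the scheme of \cite[Lemmas~3.12--3.13]{geng2025rigidity}, which I adapt to the half-space sector $\mathcal S$.

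First I would record an integrable excess energy. Setting $e(x):=J_1(v(x,\cdot))-\sigma\ge 0$ — nonnegative because $\sigma=\min_{\mathcal A}J_1$ and the extended slice $v(x,\cdot)$ lies in $\mathcal A$ — slicing the energy and using \eqref{est:J(v,ext dom)} together with \eqref{ene bd on |x|<R} gives $\int_0^R J_1(v(x,\cdot))\,dx\le \sigma R+C$ (the omitted term $\tfrac12\int|\pa_x v|^2$ being nonnegative), hence $\int_0^\infty e(x)\,dx\le C$; likewise $\int_0^\infty\|\pa_x v(x,\cdot)\|_0^2\,dx\le C$ by \eqref{est: vx} and \eqref{est:J(v,ext dom)}. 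Choosing $x_i\to\infty$ with $e(x_i)\to0$, Proposition~\ref{prop:appr by U13}(3) forces $d_1(v(x_i,\cdot),\mathcal U)\le\e$ for $i$ large, i.e.\ $x_i\in\mathcal G$, and then part~(2) gives $d_1(v(x_i,\cdot),\mathcal U)^2\le e(x_i)/\Lam\to 0$.

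Next I would upgrade this to $d_0(v(x,\cdot),\mathcal U)\to 0$ along the \emph{whole} ray $x\to\infty$. Because $\int_0^\infty\|\pa_x v(x,\cdot)\|_0^2\,dx<\infty$, the slice map $x\mapsto v(x,\cdot)\in L^2(\BR;\BR^2)$ is H\"older-$\tfrac12$ on bounded $x$-intervals (with the tails beyond the $O(x^{1/2})$ layer uniformly exponentially small by \eqref{exp decay 1/2 for v}), so $d_0(v(x,\cdot),\mathcal U)$ cannot jump; since it is small at the $x_i$ and $\{e>\eta\}$ has finite measure for every $\eta>0$, an interior elliptic estimate promoting $L^2$-closeness plus small average energy to $H^1$-closeness yields $d_1(v(x,\cdot),\mathcal U)\le\e$ for all $x\ge x_0$. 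This is \cite[Lemma~3.12]{geng2025rigidity}; in particular $[x_0,\infty)\subset\mathcal G$, $h\in C^2([x_0,\infty))$, and by Proposition~\ref{prop:appr by U13}(2) one has $d_0(v(x,\cdot),\mathcal U)^2\le e(x)/\Lam\to0$.

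Finally, on $[x_0,\infty)$ the correction term in the denominator of \eqref{id: formula h'(x)} is $O\big(\|v(x,\cdot)-U(\cdot-h(x))\|_0\big)=O(\e)$, so the denominator stays bounded below by $\tfrac12\int_\BR|U'|^2\,dy>0$; combined with \eqref{est: h' numerator 1} this gives $|h'(x)|\le C\big(\|\pa_x v(x,\cdot)\|_0^2+e^{-kx}\big)$ for $x\ge x_0$, whose right-hand side is integrable by the first step, so $h$ has bounded variation on $[x_0,\infty)$ and $h(x)\to h_0$ for some constant $h_0$ — this is \cite[Lemma~3.13]{geng2025rigidity}. I expect the genuine obstacle to be the middle step: away from $[x_0,\infty)$ the function $h$, which exists only through the implicit-function construction of Proposition~\ref{prop:appr by U13}, may be undefined, so one must rule out that $v(x,\cdot)$ repeatedly escapes the $\e$-neighborhood of $\mathcal U$; this is exactly where $\ch(\BR_+\setminus\mathcal G)\le C$ from \eqref{bad set small}, the H\"older-in-$L^2$ continuity of the slices, and elliptic regularity have to be combined.
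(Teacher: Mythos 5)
Your overall strategy is the same as the paper's (integrable excess energy, the reduction of Proposition \ref{prop:appr by U13}, integrability of $|h'|$), and your route to the first assertion is fine and arguably cleaner: the $1$-Lipschitz property of $d_0(\cdot,\mathcal U)$ in $L^2$ plus the H\"older-$\tfrac12$ continuity of the slice map and the shrinking tails of $\{e>\eta\}$ does give $d_0(v(x,\cdot),\mathcal U)\to 0$ along the whole ray, whereas the paper reaches the same conclusion through a differential inequality for $d_0^2$ (which needs the orthogonality \eqref{id: orthogonality} and hence a careful choice of intervals on which $h$ is already defined). Note also that $h=h_0$ is built from the $L^2$ reduction (Proposition \ref{prop:appr by U13}(1) with $s=0$), so once $d_0\to0$ the function $h$ is defined and $C^3$ on all of $(x_0,\infty)$ regardless of whether $x\in\mathcal G$; the worry you raise about $h$ being undefined off $\mathcal G$ is not the real issue.

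The genuine gap is your claim that $d_1(v(x,\cdot),\mathcal U)\le\e$ for \emph{all} $x\ge x_0$, i.e.\ that the bad set $\BR_+\setminus\mathcal G$ is eventually empty. Neither the finite measure bound \eqref{bad set small} nor "$L^2$-closeness plus elliptic estimates" delivers this: $e(x)$ is only known to be integrable, not to tend to zero (by \eqref{est: G}, $e(x)\to0$ would essentially require $\|\pa_x v(x,\cdot)\|_0\to0$ pointwise, which does not follow from $\int\|\pa_x v\|_0^2\,dx<\infty$), and the elliptic upgrade from $L^2$- to $H^1$-smallness of $v(x,\cdot)-U(\cdot-h(x))$ fails quantitatively because the transition region of the slice has width up to $O(x^{1/2})$ (cf.\ \eqref{exp decay 1/2 for v}), so pointwise smallness of $\pa_y v-U'(\cdot-h)$ obtained from interior estimates does not integrate to a small $L^2$ norm without a rate on $d_0$. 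Consequently your final bound $|h'(x)|\le C(\|\pa_x v(x,\cdot)\|_0^2+e^{-kx})$, which uses \eqref{ineq: energy controls H1 dist} and hence $x\in\mathcal G$ (as in \eqref{est: h' numerator 1}), is not justified on the possibly recurrent bad set. The fix is what the paper does: on $(x_0,\infty)\setminus\mathcal G$ use the weaker estimate $|h'|\le C(\|\pa_x v\|_0^2+\|v(x,\cdot)-U(\cdot-h(x))\|_1^2+e^{-kx})$ from \eqref{est: h'}, and integrate the extra term separately, bounding $\int_{(x_0,\infty)\setminus\mathcal G}\|v(x,\cdot)-U(\cdot-h(x))\|_1^2\,dx$ by the uniform bound on $J_1(v(x,\cdot))$ together with $|\BR_+\setminus\mathcal G|\le C$. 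With that replacement your argument closes.
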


\begin{proof}
    The proof is presented in Appendix \ref{apx: exist h}.
\end{proof}

It follows from Proposition \ref{prop:appr by U13} and Proposition \ref{prop: existence of h} that 
\begin{equation*}
    \lim\limits_{x\to\infty}\|v(x,\cdot)-U(\cdot- h_0)\|_{L^2(\BR)}=0.
\end{equation*}
Since for any compact set $K\subset \BR$, $v(x,\cdot)-U(\cdot-h_0)$ is uniformly bounded in $C^{2,\beta}(K)$ for any $\beta\in(0,1)$ and sufficiently large $x$, the $L^2$ convergence above can be upgraded to $C^{2,\beta}_{loc}$ for any $\beta\in(0,1)$. Thus, we have verified \eqref{apprx by U13}, which completes the proof of Theorem~\ref{main thm: refined behavior near interface}.

\appendix

\section{Proof of the upper bound in Lemma \ref{lem:sharp ene bd}.}\label{apx: sharp upper bd}

\begin{proof}
We first prove that for any $R$ sufficiently large, there exists  $\bar{R}\in(R,2R)$ such that 
\begin{equation}\label{sharp upper bdd, barR}
    J(u,B_{\bar{R}}^+)\leq 2\sigma \bar{R}+C.
\end{equation}

Let $\delta,\e$ be fixed small parameters. By the weaker energy bound \eqref{ene upper bd}, when $R$ is sufficiently large, one can always find a $\bar{R}\in (R,2R)$ such that 
\begin{equation*}
    \int_{\pa B_{\bar{R}}\cap\BR^2_+} \left( \f12|\na u|^2+ W(u)  \right)\,d\ch <2\sigma +\e.
\end{equation*}
When $\e$ is taken suitably small, it implies that $u$ should be $\delta$-close to $a_1,\,a_3,\,a_2$ respectively on three arcs which connected by two $O(1)$-size small arcs on $\pa B_{\bar{R}}$. And by Corollary \ref{corol: diffuse interface size}, those two connecting arcs is $O(\bar{R}^{1-\f{\al}{2}})$-close to $l_{\al_1},\,l_{\al_2}$, where $\al$ is the constant in \eqref{exp decay, 1-al/2}. More precisely, we have
\begin{enumerate}
    \item There are angles $0<\theta_1<\theta_3^1<\theta_3^2<\theta_2<\pi$ such that 
    \begin{equation*}
    \begin{split}
        &|\theta_1-\al_1|\leq C{\bar{R}}^{-\f{\al}{2}},\quad |\theta_2-\al_2|\leq C{\bar{R}}^{-\f{\al}{2}},\\
        &\qquad |\theta_3^i-\theta_i|\leq C(\delta,u,W) \bar{R}^{-1}, \ \ i=1,2.
    \end{split}
    \end{equation*}
\item When $\theta\in (0,\theta_1)$, $|u(\bar{R},\theta)-a_1|\leq \delta$; when $\theta\in (\theta_3^1,\theta_3^2)$, $|u(\bar{R},\theta)-a_3|\leq \delta$;  when $\theta\in (\theta_2,\pi)$, $|u(\bar{R},\theta)-a_2|\leq \delta$.
\end{enumerate}

Now we construct an energy competitor $u_t$ on $B_{\bar{R}}^+$ such that $u_t=u $ on $\pa B_{\bar{R}}^+$. Consider the partition $B_{\bar{R}}^+=\bigcup\limits_{i=1}^5  \Om_i$ for $\Om_1$--$\Om_5$ defined in the same way as in \eqref{def:Om 1-5} by simply replacing $r$ by $\bar{R}$. We set $u_t|_{\pa \Om_1}$ as
\begin{equation*}
    u(x,y)=\begin{cases}
        a_1, & y=1,\, x\in[1,\sqrt{\bar{R}^2-2\bar{R}}], \text{ or } x+iy=(\bar{R}-1)e^{i\theta} \text{ for }\theta \in[\arcsin{\f{1}{\bar{R}-1}},\theta_1],\\
        a_3, &  x+iy=(\bar{R}-1)e^{i\theta} \text{ for }\theta \in[\theta_3^1,\theta_3^2],\\
        a_2, & y=1,\, x\in[-\sqrt{\bar{R}^2-2\bar{R}},-1], \text{ or } x+iy=(\bar{R}-1)e^{i\theta} \text{ for }\theta \in[\theta_2, \pi-\arcsin{\f{1}{\bar{R}-1}}],
    \end{cases}
\end{equation*}
while on the small arcs $\{(\bar{R},\theta): \theta\in (\theta_1,\theta_3^1)\}$, $\{(\bar{R},\theta): \theta\in (\theta_3^2,\theta_2)\}$ and the flat segment $\{(x,1):x\in[-1,1]\}$, take $u_t$ to be the smooth functions that connect the corresponding phases. 

On $\Om_2$, let $u_t$ vertically interpolate between $u_t|_{\{y=1\}}$ and $u_0$:
\begin{equation*}
    u_t(x,y)= yu_t(x,1)+(1-y)u_0(x), \quad x\in[-\sqrt{\bar{R}^2-2\bar{R}}, \sqrt{\bar{R}^2-2\bar{R}}],\,y\in(0,1). 
\end{equation*}
It is easy to verify that $\na u_t$ and $u_t$ are uniformly bounded in $\{(x,y): x\in [-1,1],\, y\in[0,1]\}$, and therefore
\begin{equation*}
    J(u_t, \{x\in [-1,1],\, y\in[0,1]\})\leq C.
\end{equation*}
On the positive leg $\Om_2^+:=\{(x,y):x\in[1,\sqrt{\sqrt{\bar{R}^2-2\bar{R}}}], y\in [0,1]\}$, by \eqref{u0 finite energy} and \eqref{u_0 conv to a1 a2} we have 
\begin{equation*}
    \int_{\Om_2^+ }|\pa_y u_t|^2\,dxdy =\int_1^{\sqrt{\bar{R}^2-2\bar{R}}} |a_1-u_0(x)|^2\,dx \leq C.
\end{equation*}
\begin{equation*}
    \int_{\Om_2^+}|\pa_x u_t|^2\,dxdy\leq \int_1^{\sqrt{\bar{R}^2-2\bar{R}}} |\pa_x u_0|^2\,dx\leq C
\end{equation*}
\begin{equation*}
     \int_{\Om_2^+ }|W(u_t)|^2\,dxdy \leq  \int_{\Om_2^+ }|u_t(x,y)-a_1|^2\,dxdy\leq C.
\end{equation*}
Consequently, 
\begin{equation*}
    J(u_t,\Om_2^+)\leq C,
\end{equation*}
and similar estimate also holds for the negative half $\Om_2^-:= \{(x,y):x\in[-\sqrt{\sqrt{\bar{R}^2-2\bar{R}}},-1], y\in [0,1]\}$, which yields
\begin{equation*}
    J(u_t, \Om_2)\leq C(u,W).
\end{equation*}

On $\Om_3$, which is approximately the width-$1$ layer between $\pa B_{\bar{R}-1}$ and $\pa B_{\bar{R}}$, let $u_t$ interpolate in the radial direction between $u_t|_{\pa B_{\bar{R}}}$ and $u|_{\pa B_{\bar{R}}}$. Through the same estimate for $\Om_2$, one can conclude that 
\begin{equation*}
    J(u_t, \Om_3) \leq C(u,W).
\end{equation*}
We point out this estimate relies on the $O(1)$-size of the phase transitional layer on $\pa B_{\bar{R}}$. 

For the corner regions $\Omega_4$ and $\Omega_5$, one can argue as in the proof of Lemma~\ref{lem:rough ene upper bdd} to extend $u_t$ in $\Omega_4 \cup \Omega_5$ and ensure that
\begin{equation*}
    J(u_t,\Om_4\cup\Om_5)\leq C. 
\end{equation*}

Finally, with the refined boundary data on $\partial \Omega_1$, we directly adopt the construction from \cite[Appendix~A]{alikakos2024triple} to build $u_t$ using $U_{13}$ and $U_{32}$ around $l_{\alpha_1}$ and $l_{\alpha_2}$, respectively, and ensure that it satisfies
\begin{equation*}
    J(u_t, \Om_1)\leq 2\sigma \bar{R}+C.
\end{equation*}

Combining all the estimates in $\Om_1$--$\Om_5$ yields \eqref{sharp upper bdd, barR}. The same upper bound for $R$ follows from \eqref{sharp upper bdd, barR} and the lower bound on the half annulus $A_{R,\bar{R}}^+$:
\begin{equation*}
    J(u, A_{R,\bar{R}}^+)\geq 2\sigma (\bar{R}-R)-C,
\end{equation*}
thanks to \eqref{exp decay on th0/2, (th0+pi)/2}. 

\end{proof}

\section{Proof of Proposition \ref{prop: existence of h}}\label{apx: exist h}

  We first prove $\lim\limits_{x\to\infty} d(v(x,\cdot),\mathcal{U})=0$ by contradiction. Suppose there exist $\e_0>0$ and a sequence $x_n\ri\infty$ such that $d_0(v(x_n,\cdot),\mathcal{U})\geq \e_0$ for every $n$. Without loss of generality, we may assume $\e_0<\e$, where $\e$ is the constant in Proposition \ref{prop:appr by U13}.  

    When $d_0(v(x, \cdot),\mathcal{U})>\f{\e_0}{10}$, Proposition \ref{prop:appr by U13} implies that $J_1(v(x, \cdot))>\sigma+\al(\f{\e_0}{10})^2$, and therefore the measure of the set $\{x: d_0(v(x, \cdot),\mathcal{U})>\f{\e_0}{10}\}$ is bounded by a constant $\frac{C}{\al(\f{\e_0}{10})^2}$ due to \eqref{ene bd on |x|<R} and \eqref{est:J(v,ext dom)}. For sufficiently large $R$, one may always find $x_R>R$ such that 
    \begin{equation*}
        d_0(v(x_R, \cdot),\mathcal{U})\leq \f{\e_0}{10}. 
    \end{equation*}
    Set
    \begin{equation*}
    \bar{x}_R:= \inf\{x: x>x_R,\, d_0(v(x, \cdot),\mathcal{U})=\e_0\},
    \end{equation*}
    where the existence of $\bar{x}_R>x_R$ is guaranteed by the fact that $x_n\ri\infty$. For any $x\in (x_R,\bar{x}_R)$, we compute
    \begin{equation}\label{est: derivative of d0^2}
    \begin{split}
        &\f{d}{dx} |d_0(v(x, \cdot),\mathcal{U})|^2\\
        =&\f{d}{dx} \int_{-\infty}^\infty |v(x,y)-U(y-h(x))|^2\,dy\\
        =& \int_{-\infty}^\infty \left(\pa_xv(x,y)-U'(y-h(x))\pa_xh(x)\right)\cdot\left(v(x,y)-U(y-h(x))\right)\,dy\\
        =& \int_{-\infty}^\infty \pa_xv(x,y)\cdot\left(v(x,y)-U(y-h(x))\right)\,dy\\
        \leq & \f12\left(\|\pa_x v(x,\cdot)\|_0^2 + |d_0(v(x,\cdot), \mathcal{U})|^2\right)
    \end{split}
    \end{equation}
    where we have utilized \eqref{id: orthogonality}. Moreover, using \eqref{est: G} and \eqref{ineq: energy controls H1 dist}, if $x\in  \mathcal{G}$, 
    $$
    d_0(v(x,\cdot),\mathcal{U})^2\leq \f{1}{\Lambda}  \left(\|\pa_x v(x,\cdot)\|_0^2+Ce^{-kx}\right)
    $$
    Next we compute 
    \begin{equation}\label{est: int from xR to bar xR}
        \begin{split}
            \f{99\e_0^2}{100}\leq  &\int_{x_R}^{\bar{x}_R} \left(\f{d}{dx} |d_0(v(x,\cdot),\mathcal{U})|^2\right)\,dx\\
            \leq &\int_{(x_R,\bar{x}_R)\cap\mathcal{G}} \left(\f{\Lambda+1}{2\Lambda} \|\pa_x v(x,\cdot)\|_0^2+Ce^{-kx}\right)\,dx \\
            &\ \ + \int_{(x_R,\bar{x}_R)\setminus \mathcal{G}} \f12\left( \|\pa_x v(x,\cdot)\|_0^2+ d_0(v(x,\cdot), \mathcal{U})^2\right)\,dx \\
            \leq &C \int_{x_R}^{\bar{x}_R} \|\pa_x v\|_0^2\,dx+ \e_0^2|(x_R,\bar{x}_R)\setminus\mathcal{G}|+Ce^{-kx_R}. 
        \end{split}
    \end{equation}
    Since both $\int_{x>0}|\pa_x v|^2\,dz$ and $|\mathbb{R}_+\setminus \mathcal{G}|$ are bounded by a constant $C(u,W)$, the last line in \eqref{est: int from xR to bar xR} goes to $0$ as $R\ri\infty$, which yields a contradiction. Thus, we obtain  $\lim\limits_{x\to\infty} d(v(x,\cdot),\mathcal{U})=0$.

It follows from Proposition \ref{prop:appr by U13} that there exists $x_0>0$ such that $h(x)$ is a well-defined $C^3$ function on $(x_0,\infty)$ and satisfies \eqref{id: orthogonality}, \eqref{id: formula h'(x)} and $d_0(v(x,\cdot), \mathcal{U})<\e$.

When $x>x_0$, the denominator in \eqref{id: formula h'(x)} can be estimated by
\begin{equation}
    \label{est: h' denominator}
    \begin{split}
        &\int_{-\infty}^\infty\big[ |U'(y-h(x))|^2 + U''(y-h(x))(v(x,y)-U(y-h(x)))\big]\,dy \\
        \geq & \|U'\|_0^2-\e \|U''\|_0\\
        \geq & \f12\|U'\|_0^2\geq C,
    \end{split}
\end{equation}
where we use that $\|U''\|_0$ is a constant and $\e$ is sufficiently small. Now we combine \eqref{id: formula h'(x)}, \eqref{est: h' numerator 1} and \eqref{est: h' denominator} to obtain
\begin{equation}\label{est: h'}
    |h'(x)|\leq \begin{cases}
    C\left(\|\pa_x v(x,\cdot)\|_0^2+e^{-kx}\right), & \quad x\in \mathcal{G},\\
    C\left(\|\pa_x v(x,\cdot)\|_{0}^2+\|v(x,\cdot)-U(\cdot-h(x))\|_1^2+e^{-kx}\right), & \quad x\in (x_0,\infty)\setminus\mathcal{G}.
    \end{cases}
\end{equation}
where $C=C(u,W)$. 

We conclude the proof by the following calculation which implies the existence of $\lim\limits_{x\ri\infty} h(x)$.

    \begin{equation*}
        \begin{split}
          &\int_{x_0}^\infty |h'(x)|\,dx\\
          \leq & C\left(\int_{x_0}^\infty \|\pa_x v(x,\cdot)\|_0^2\,dx +\int_{(x_0,\infty)\setminus\mathcal{G}} \|v(x,\cdot)-U(\cdot-h(x))\|_1^2\,dx+e^{-kx_0}\right)\\
          \leq & C\int_{\{x>x_0\}}|\pa_x v|^2\,dz +C\int_{(x_0,\infty)\setminus\mathcal{G}}\left( \|\pa_y v(x,\cdot)\|_0^2+\|U'\|_0^2 +\|v(x,\cdot)-U(\cdot-h(x))\|_0^2   \right)\,dx+Ce^{-kx_0}\\
          \leq & C\left(\int_{\{x>x_0\}}|\pa_x v|^2\,dz+ \int_{(x_0,\infty)\setminus\mathcal{G}} J_1(v(x,\cdot))\,dx+(\|U'\|_0^2+\e)|(x_0,\infty)\setminus\mathcal{G}|\right)+Ce^{-kx_0}\\
          \leq & C\left(\int_{\{x>x_0\}}|\pa_x v|^2\,dz+\int_{\{x\in\{(x_0,\infty)\setminus\mathcal{G}\}} |\pa_x v|^2\,dz +(\sigma+\|U'\|_0^2+\e)|(x_0,\infty)\setminus\mathcal{G}|\right)+Ce^{-kx_0}\\
          \leq & C(u,W),
        \end{split}
    \end{equation*}
where we have utilized \eqref{est: h'}, \eqref{est: G}, \eqref{bad set small} and $\int_{\{x>0\}} |\pa_x v|^2\,dz\leq C$.

\bibliographystyle{acm}
\bibliography{half-space}
\end{document}